\newtheorem{theorem}{Theorem}[section]
\newtheorem{proposition}{Proposition}[section]
\newtheorem{lemma}{Lemma}[section]
\newtheorem{corollary}{Corollary}[section]
\newtheorem{remark}{Remark}[section]
\newtheorem{definition}{Definition}[section]
 \def\dsum{\displaystyle\sum}
\def\dfrac{\displaystyle\frac}
\def\M*{M\setminus \{p_1,p_2,\ldots,p_N,q_1,q_2,\ldots,q_S\}}
\def\S*{\Sigma \setminus \{p_1,p_2,\ldots,p_I,q_1,q_2,\ldots,q_J\}}
\def\dps{\displaystyle}
\def\qdl{\textquotedblleft}
\def\ekm{extremal K\"{a}hler metric}
\def\ehm{extremal Hermitian metric}
\def\BE{\begin{equation}}
\def\EE{\end{equation}}
\def\lccc{local complex coordinate chart }
\newcommand{\bey}{\begin{eqnarray}}
\newcommand{\eey}{\end{eqnarray}}
\newcommand{\beyy}{\begin{eqnarray*}}
\newcommand{\eeyy}{\end{eqnarray*}}
\newcommand{\nn}{\nonumber}
\begin{document}

\title{$\ \ $ HCMU metrics with cusp singularities and conical singularities }
\author{Qing Chen\footnotemark[1], \ Yingyi Wu\footnotemark[2], \ Bin Xu\footnotemark[3]}

\date{}
\maketitle

\begin{abstract}
An HCMU metric is a conformal metric which has a finite number of
singularities on a compact Riemann surface and satisfies the
equation of the extremal K\"{a}hler metric. In this paper, we give
a necessary and sufficient condition for the existence of a kind
of HCMU metrics which has both cusp singularities and conical singularities.
\end{abstract}

\footnotetext[1]{The first author is supported in part by the National 
Natural Science Foundation of China (Grant No. 11271343).}

\footnotetext[2]{The second author is supported in part by the Youth Fund (Grant No. 10901160), the
National Natural Science Foundation of China (Grant No. 11071249)
and the President Fund of UCAS.}

\footnotetext[3]{The last author is supported in
part by Anhui Provincial Natural Science Foundation (Grant No.
1208085MA01) and the Fundamental Research Funds for the Central
Universities (Grant No. WK0010000020).}

\section{Introduction}
The extremal K\"{a}hler metric on a K\"{a}hler manifold was defined in \cite{Ca} by Calabi.
The aim is to find the \qdl
best" metric in a fixed K\"{a}hler class on a compact K\"{a}hler manifold
$M$. In a fixed K\"{a}hler class, an extremal K\"{a}hler metric is the critical
point of the following Calabi energy functional
$$
\mathcal{C}(g)=\int_{M} R^2 dg,
$$
where $R$ is the scalar curvature of the metric $g$ in the K\"{a}hler
class. The Euler-Lagrange equation of $\mathcal{C}(g)$ is $R_{,\alpha
\beta}=0$, where $R_{,\alpha\beta}$ is the second-order $(0,2)$
covariant derivative of $R$. When $M$ is a compact Riemann surface without boundary,
 Calabi proved that
an extremal K\"{a}hler metric is a CSC(constant scalar curvature) metric in
\cite{Ca}. This coincides with  the classical uniformization
theorem, which says that there exists a CSC
metric in each fixed K\"{a}hler class of Riemann surface without
boundary.

On the other hand, there have been
many attempts to generalize the classical uniformization theorem to surfaces with boundaries.
The main focus, started by the independent work of Troyanov\cite{T}
and McOwen\cite{M}, has been to study the existence or nonexistence
of constant curvature metrics on surfaces with conical
singularities. But in general one should not expect to get a
clear-cut statement about the existence(or nonexistence) of
solutions, since the constant curvature equation is overdetermined
in this case. Therefore we can consider extremal K\"{a}hler metrics with singularities as the generalization
of constant curvature metrics on Riemann surfaces with conical singularities. In this paper, we study two kinds of
singularities: cusp singularities and conical singularities.

\par
   Now let $\Sigma$ be a compact Riemann surface and $\{a_1,a_2,\cdots,a_n\}$ be
a finite set of $\Sigma$. We call a smooth metric $g$ on $\Sigma\setminus\{a_1,a_2,\cdots,a_n\}$ an
\ehm~(v.s.\cite{Ch2}) if $g$ satisfies
\begin{equation}\label{ehmequ1}
 \Delta_{g}K+K^2=C,
\end{equation}
where $K$ is the Gauss curvature of $g$ and $C$ is a constant. This condition is equivalent to
\begin{equation}\label{ehmequ2}
 \dfrac{\partial K_{,zz}}{\partial \bar{z}}=0.
\end{equation}
One can refer to \cite{Ch3} for details. (\ref{ehmequ2}) has a special case
\begin{equation}\label{HCMUequ}
  K_{,zz}=0.
\end{equation}
We call a metric an HCMU(the Hessian of the Curvature of the Metric is Umbilical) metric
(v.s.\cite{Ch3}) if the metric satisfies (\ref{HCMUequ}).  Obviously an HCMU metric can be regarded as
a direct generalization of an \ekm~to a punctured Riemann surface. {\bf In the following we always assume
that an \ehm~or an HCMU metric has
finite area and finite Calabi energy}, that is,
\begin{equation}\label{finite}
 \int_{\Sigma\setminus\{a_1,a_2,\cdots,a_n\}}dg<+\infty, ~~ \int_{\Sigma\setminus\{a_1,a_2,\cdots,a_n\}}
K^2 dg <+\infty.
\end{equation}
\par
For a general \ehm~$g$ on $\Sigma\setminus\{a_1,a_2,\cdots,a_n\}$, if $g$ has cusp singularities at
$a_1,a_2,\cdots,a_n$, X.X.Chen in \cite{Ch2} proved that $g$ must be an HCMU metric and
gave a classification theorem, and if $g$ has conical singularities at $a_1,a_2,\cdots,a_n$ and at each singularity the singular
angle is less than or equal to $\cfrac{\pi}{2}$, G.F.Wang and X.H.Zhu in \cite{WZ} proved that $g$ is also an HCMU metric and
gave a classification theorem.
\par
For an HCMU metric $g$ on $\Sigma\setminus\{a_1,a_2,\cdots,a_n\}$ which is not a CSC metric, if $g$ has conical singularities at
$a_1,a_2,\cdots,a_n$, the first two authors in \cite{CW2} gave a sufficient and necessary condition for the existence of this kind of
metric, that is,
\begin{theorem}[\cite{CW2}]\label{conicalHCMU}
Let $M$ be a compact Riemann surface and $p_1,p_2,\ldots,p_N$ be $N$
points on $M$. Suppose that  $\alpha_1,\alpha_2,\ldots,\alpha_N$ are
$N$ positive real numbers($\alpha_n\neq 1$, $n=1,2,\ldots,N$) and
$\alpha_1,\alpha_2,\ldots,\alpha_J$ are integers with $\alpha_j \geq
2,j=1,2,\ldots,J$. Then there exists a normalized HCMU metric $g$ on
$M$ such that $g$ has conical singularities at $p_n$ with the angles
$2\pi \alpha_n~(1\le n\le N)$ and $p_1,p_2,\ldots,p_J$ are the
saddle points of the curvature $K$, if and only if
\begin{description}
\item[1.]$S\stackrel{\triangle}{=}\dsum_{j=1}^J\alpha_j+\chi(M)-N\geq
0$,
\item[2.]
there are $S$ distinct points $\{q_1,q_2,\ldots,q_S\}\subset M
\setminus \{p_1,p_2,\ldots,p_N\}$ such that we can choose $L(0\leq
L\leq N-J)$ points in $\{p_{J+1},p_{J+2},\ldots,p_N\}$ (w.l.o.g. we
assume these points are $p_{J+1},p_{J+2},\ldots,p_{J+L}$), and $T(0
\leq T\leq S)$ points in $\{q_1,q_2,\ldots,q_S\}$ (w.l.o.g. we
assume these points are $q_1,q_2,\ldots,q_T$), to satisfy

\begin{description}

\item[i)]$\alpha_{max}\stackrel{\triangle}{=}\dsum_{l=J+1}^{J+L}\alpha_l+T>
\alpha_{min}\stackrel{\triangle}{=}\dsum_{m=J+L+1}^N \alpha_m+S-T>0$,
\item[ii)]there exists  a meromorphic 1-form $\omega$ on $M$ satisfying
\begin{itemize}
\item[(a)]$(\omega)=\dsum_{j=1}^J(\alpha_j-1)P_j-\dsum_{k=J+1}^N
P_k-\dsum_{\xi=1}^S Q_\xi$,

\item[(b)]$Res_{p_l}(\omega)=\sigma\alpha_l$, $l=J+1,J+2,\ldots,J+L$; $Res_{p_m}(\omega)=
\sigma \lambda \alpha_m$, $m=J+L+1,J+L+2,\ldots,N$;
$Res_{q_\mu}(\omega)=\sigma$, $\mu=1,2,\ldots,T$ and
$Res_{q_\nu}(\omega)=\sigma \lambda$, $\nu=T+1,T+2,\ldots,S$, where
$\lambda=-\cfrac{\alpha_{max}}{\alpha_{min}}$ and
$\sigma=-\cfrac{(2\lambda+1)^2}{3\lambda(\lambda+1)}$,

\item[(c)]$\omega+\bar{\omega}$ is exact on $\M*$.
\end{itemize}
\end{description}
\end{description}
\end{theorem}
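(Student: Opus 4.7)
The plan is to attack the theorem in two directions (necessity and sufficiency) using the fact that on any non-CSC HCMU metric, the equation $K_{,zz}=0$ forces $K_{,z}\,dz$ to define, after a suitable normalization, a meromorphic $1$-form on the underlying compact Riemann surface $M$. Concretely, I would set
\begin{equation*}
\omega \;=\; c\,\partial K
\end{equation*}
for an appropriate real constant $c$ (chosen so the residues match the stated normalization involving $\lambda$ and $\sigma$). Then $\omega+\bar\omega=c\,dK$ is automatically exact on the smooth locus $\M*$, which is exactly condition (c). So the real content lies in identifying the divisor of $\omega$ and its residues.

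For the \emph{necessity} direction I would analyze the local behaviour of $K$ at each special point. Near a saddle point $p_j$ ($1\le j\le J$) where the metric has a conical singularity of angle $2\pi\alpha_j$ with $\alpha_j\in\mathbb Z_{\ge 2}$, a normal form computation for HCMU metrics (using the standard reduction around integer-angle conical singularities) shows that $K$ is smooth with a critical point of order $\alpha_j$, hence $\partial K$ has a zero of order $\alpha_j-1$; this gives the contribution $\sum_j(\alpha_j-1)P_j$ to $(\omega)$. At the remaining conical points $p_k$ ($k>J$), $K$ attains an extremum (either $K_{\max}$ or $K_{\min}$) and $\partial K$ acquires a simple pole, contributing $-P_k$. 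Finally, away from the conical singularities, the critical set of $K$ on $M$ still contains the global maxima and minima of $K$; counting these using the degree relation $\deg(\omega)=2g(M)-2$ forces exactly $S=\sum\alpha_j+\chi(M)-N$ extra points $q_1,\ldots,q_S$, giving condition (1.) and the divisor formula (a). Splitting the extrema according to whether they attain $K_{\max}$ or $K_{\min}$ yields the partition into the $L$, $T$ subsets and, via residue computations on model HCMU disks, the explicit residues in (b) with $\lambda=-K_{\max}/K_{\min}$ and $\sigma$ a universal normalization. The inequality $\alpha_{\max}>\alpha_{\min}>0$ in (i) reflects the fact that the total residues at the max-locus and min-locus are related to $K_{\max}$ and $K_{\min}$ respectively, both nonzero, with $|K_{\max}|\ne|K_{\min}|$ (the HCMU metric is not CSC).

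For the \emph{sufficiency} direction, I would take the data $(M,\{p_n\},\{q_\xi\},\omega,\lambda,\sigma)$ and reconstruct the metric. Since $\omega+\bar\omega$ is exact on $\M*$, choose a primitive $K$ with the normalization $K_{\max}=-\lambda K_{\min}$ dictated by the residues; the residue formulas guarantee that $K$ is single-valued and real-valued. I would then define the conformal factor via the standard HCMU reconstruction
\begin{equation*}
g \;=\; \frac{|\omega|^2}{(K-K_{\min})(K_{\max}-K)},
\end{equation*}
(up to the constant $\sigma$-dependent factor), and verify directly that (i) $g$ extends smoothly across the critical points of $K$ on $\M*$, (ii) at each $p_j$ with $1\le j\le J$ it develops a conical singularity with angle $2\pi\alpha_j$ (this is where the zero-order $\alpha_j-1$ of $\omega$ is essential), (iii) at each $p_k$ with $k>J$ the angle is $2\pi\alpha_k$ (this uses the precise residues $\sigma\alpha_l$ or $\sigma\lambda\alpha_m$), and (iv) the Gauss curvature of $g$ equals $K$ and $K_{,zz}=0$.

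The main obstacle I expect is step (iv) together with (ii)--(iii): one has to show both that the conformal factor produced by the abstract $1$-form gives exactly the prescribed cone angles at the $p_n$ (not just a conical singularity of \emph{some} angle), and that the global constant $\sigma=-(2\lambda+1)^2/(3\lambda(\lambda+1))$ is forced rather than arbitrary. Both come down to matching local residue data with the local normal form of an HCMU metric near an extremum of $K$, which requires a careful and somewhat delicate ODE analysis of the restriction of the metric along trajectories of $\nabla K$; the rest of the argument is bookkeeping with divisors, residues, and Stokes' theorem.
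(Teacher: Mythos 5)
Your proposal founders on its very first step: the character $1$-form of an HCMU metric is \emph{not} a constant multiple of $\partial K$. Writing $g=e^{2\varphi}|dz|^2$, the equation $K_{,zz}=0$ says that the gradient vector field $\nabla K=\sqrt{-1}\,e^{-2\varphi}K_{\bar z}\,\partial/\partial z$ is holomorphic; the meromorphic object is the $1$-form \emph{dual} to this field, $\omega=dz/F$ with $F=4e^{-2\varphi}K_{\bar z}$, and it is related to $\partial K$ by $\partial K=P(K)\,\omega$, where $P(K)=-\frac{1}{3}K^3+CK+C'=-\frac{1}{3}(K-K_1)(K-K_2)(K+K_1+K_2)$ is the cubic produced by the gradient identity $-4\sqrt{-1}\,\nabla K(K)=P(K)$ (Proposition \ref{gradeprop} of this paper; system (\ref{sys0}) for the conical case). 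Since $P(K)$ is a nonconstant real function, $c\,\partial K$ is not meromorphic — it is not even smooth at the conical points — and your divisor analysis is internally inconsistent: at an extremal conical point $p_k$ ($k>J$) the continuous function $K$ attains its maximum or minimum, so $\partial K$ tends to zero there; it is $\omega=\partial K/P(K)$ that has the simple pole, precisely because $P(K)\to 0$. Likewise condition (c) is not the triviality ``$c\,dK$ is exact'': with the correct $\omega$ one has $\omega+\bar\omega=dK/P(K)=d\bigl(G(K)\bigr)$ with $G'=1/P$, and in the sufficiency direction exactness is exactly what allows one to solve the separable ODE $dK/P(K)=\omega+\bar\omega$ for a single-valued $K$ by inverting the monotone function $\sigma(t)$ (Step 1, Claim 1 of Section 3.2), rather than taking $K$ to be a primitive of $\omega+\bar\omega$ as you propose.

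The same conflation corrupts your reconstruction formula: the metric is recovered as $g=4P(K)\,\omega\bar\omega=-\frac{4}{3}(K-K_1)(K-K_2)(K+K_1+K_2)\,\omega\bar\omega$, a \emph{cubic} factor times $|\omega|^2$, not $|\omega|^2$ divided by the quadratic $(K-K_{\min})(K_{\max}-K)$; the missing factor $(K+K_1+K_2)$ is not constant and cannot be absorbed into $\sigma$. (Note also that this paper does not itself prove Theorem \ref{conicalHCMU} — it quotes it from \cite{CW2} — but it proves the exactly analogous Theorem \ref{mainth} for cusp-plus-conical singularities, and Section 3 displays the correct architecture: the gradient identity, the exclusion of the wrong factorizations of the cubic, the local Laurent analysis of $1/F$ at each singular point to pin down pole orders and residues, and the inversion of $\sigma(t)$ to build $K$ in the sufficiency part.) Your bookkeeping from the divisor to condition 1 via $\deg(\omega)=-\chi(M)$ is fine, but everything upstream of it — meromorphy of $\omega$, the divisor (a), the residues (b), the forced value of $\sigma$, and the cone-angle verification — requires the relation $\partial K=P(K)\,\omega$, which your ansatz replaces with a false identity.
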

\noindent In fact from \cite{CW2} we get any HCMU metric which is not a CSC metric and has conical singularities
is determined by the following system:
\BE\label{sys0}
\begin{cases}
 \cfrac{dK}{-\frac{1}{3}(K-K_1)(K-K_2)(K+K_1+K_2)}=\omega+\bar{\omega} \\
g=-\frac{4}{3}(K-K_1)(K-K_2)(K+K_1+K_2)\omega \bar{\omega}\\
K(p_0)=K_0,~K_2<K_0<K_1,~p_0 \in M \setminus \{zeros~and~poles~of~\omega\},
\end{cases}
\EE
where $K_1>0$ which is the maximum of the Gauss curvature $K$, $K_1>K_2>-\cfrac{K_1}{2}$ which is the minimum of
the Gauss curvature $K$ and $\omega$ is a meromorphic 1-form on $M$ with the properties:
\begin{enumerate}
  \item  $\omega$ only has simple poles,
  \item  the residue of $\omega$ at each pole is a real number,
  \item  $\omega+\bar{\omega}$ is exact on $M \setminus \{poles~of~\omega\}$.
\end{enumerate}
However (\ref{sys0}) does not include the case that an HCMU metric has cusp singularities.
Therefore we need to reconsider this case. In this paper we try to give a sufficient and
necessary condition for the existence of an HCMU metric which is not a CSC metric and has cusp singularities and
conical singularities. That is our main theorem:
\noindent \begin{theorem}\label{mainth}
Let $\Sigma$ be a compact Riemann surface and $p_1,p_2,\cdots,p_I,q_1,q_2,\cdots,q_J$ be $I+J$ points on
$\Sigma$($I>0,J\geq 0$). Suppose that $\alpha_1,\alpha_2,\cdots,\alpha_J$ are $J$ positive real numbers
such that $\alpha_j \neq 1,~j=1,2,\cdots,J$, and $\alpha_1,\alpha _2,\cdots,\alpha_L$ are integers($L\leq J$). Then there exists
an HCMU metric $g$ on $\Sigma \setminus \{p_1, p_2,\cdots,p_I,q_1,q_2,\cdots,q_J\}$
which is not a CSC metric such that $g$ has cusp singularities at $p_i,~i=1,2,\cdots,I$, has conical
singularities at $q_j,~j=1,2,\cdots,J$, with the angle $2\pi\alpha_j$ respectively and $q_1,q_2,\cdots,q_L$
are the saddle points of the Gauss curvature $K$, if and only if
\begin{itemize}
 \item[1.] $S:=\dsum_{l=1}^L \alpha_l-I-J+\chi(\Sigma)\geq 0$,
 \item[2.] there are $S$ distinct points $\{e_1, e_2, \cdots, e_S\}\subset \Sigma\setminus\{p_1,p_2,\cdots,p_I,q_1,q_2,
 \cdots,q_J\}$ such that there exists a meromorphic 1-form $\omega$ on $\Sigma$ which satisfies
 \begin{itemize}
  \item[(a)] $(\omega)=\dsum_{l=1}^L (\alpha_l-1)Q_l-\dsum_{i=1}^I P_i-\dsum_{l'=L+1}^J Q_{l'}-\dsum_{s=1}^S E_s$;
  \item[(b)] $Res_{p_i}(\omega)>0,~ i=1,2,\cdots,I$, \\
  $Res_{q_{l'}}(\omega)=\Lambda\alpha_{l'},~l'=L+1,L+2,\cdots,J$, \\
  $Res_{e_s}(\omega)=\Lambda,~s=1,2,\cdots,S$, \\
  where $\Lambda$ is a negative real number;
  \item[(c)] $\omega+\bar{\omega}$ is exact on $\Sigma \setminus \{p_1,p_2,\cdots,p_I,q_1,q_2,
\cdots,q_J,e_1,e_2,\cdots,
  e_S\}$.
 \end{itemize}
\end{itemize}
\end{theorem}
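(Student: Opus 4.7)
The plan is to adapt the framework of Theorem \ref{conicalHCMU} and the system (\ref{sys0}) to accommodate cusp singularities. Any non-CSC HCMU metric can be locally expressed, away from its singularities, as $g=-\tfrac{4}{3}(K-K_1)(K-K_2)(K+K_1+K_2)\,\omega\bar\omega$ with $K$ governed by the first equation of (\ref{sys0}) for some meromorphic $1$-form $\omega$ and constants $K_1>K_2$. The proof then consists of translating the prescribed singular behavior of $g$ and the specified types of critical points of $K$ into divisor, residue and exactness data for $\omega$.

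For necessity, I would start from such a metric, apply the structural results of \cite{Ch2} for cusps and \cite{CW2} for cones to realize it in the above form, and then run a local asymptotic analysis at each prescribed singular point. At a saddle conical point $q_l$ with $l\le L$, $K$ is smooth there and the prescribed angle $2\pi\alpha_l$ forces $\omega$ to have a zero of order $\alpha_l-1$. At a non-saddle conical point $q_{l'}$, $K$ must reach an extremum, and matching the vanishing of the cubic factor with the local behavior of $\omega$ produces a simple pole of $\omega$ with real residue $\Lambda\alpha_{l'}$, where the common constant $\Lambda<0$ is determined by $K_1,K_2$ in the same way that $\sigma\lambda$ was in Theorem \ref{conicalHCMU}. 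At a cusp $p_i$, the finite area and Calabi energy assumption (\ref{finite}), combined with the Poincar\'e-cusp model $|dz|^{2}/(|z|\log|z|)^{2}$, forces $\omega$ to have a simple pole with \emph{positive} residue. Every remaining critical point of $K$ on the smooth part of $\Sigma$ gives a simple pole of $\omega$ with residue $\Lambda$, and these are precisely the points $e_s$. Condition (a) is then the divisor formula for $\omega$, condition (b) records the residues just computed, the integer count $S=\sum_{l=1}^L\alpha_l-I-J+\chi(\Sigma)$ follows from $\deg(\omega)=-\chi(\Sigma)$, and condition (c) expresses the single-valuedness of $K$ on the punctured surface, since the first equation of (\ref{sys0}) exhibits $\omega+\bar\omega$ as the differential of a function of $K$.

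For sufficiency, given $\omega$, $\Lambda<0$ and the $e_s$, I would first select $K_1>0$ and $K_2$ satisfying the algebraic relation to $\Lambda$ dictated by the matching used in the necessity step. The exactness of $\omega+\bar\omega$ allows the first equation of (\ref{sys0}) to be integrated globally, defining $K$ on $\Sigma$ minus the pole set of $\omega$ once a normalization $K(p_0)=K_0\in(K_2,K_1)$ is fixed; the metric $g$ is then defined by the second line of (\ref{sys0}) and is automatically a positive conformal metric satisfying $K_{,zz}=0$, by the same computation as in \cite{CW2}. The remaining work is local: near each $q_l$ the zero of $\omega$ of order $\alpha_l-1$ yields the conical angle $2\pi\alpha_l$; near each $q_{l'}$ the prescribed residue together with the limiting value of $K$ yields the conical angle $2\pi\alpha_{l'}$; near each $e_s$ the residue $\Lambda$ cancels all singular contributions and produces a smooth extremum of $K$; and near each $p_i$ the positive residue yields a genuine cusp.

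The main obstacle is the cusp analysis, which has no analogue in \cite{CW2}. The cusp model contains a logarithmic factor absent from the power-type behavior produced by ordinary zeros or simple poles of $\omega$, so in the ODE $dK/\bigl(-\tfrac{1}{3}(K-K_1)(K-K_2)(K+K_1+K_2)\bigr)=\omega+\bar\omega$ with $\omega\sim R_i\,dz/z$ near a cusp, the cubic must degenerate to second order where $K$ limits, producing the asymptotics $K-K_{*}\sim 1/(R_i\log|z|^{2})$ for an appropriate root $K_{*}$. Identifying this root, deriving the sign constraint $R_i>0$, pinning down the algebraic relation between $K_1,K_2$ and $\Lambda$ that makes this compatible with the conical residue data, and verifying that (\ref{finite}) is both necessary for and a consequence of this local picture, is the technical heart of the argument. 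Once these cusp asymptotics are in place, the rest of the proof parallels the conical case treated in \cite{CW2}.
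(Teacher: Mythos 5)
Your overall strategy coincides with the paper's: extract a character $1$-form $\omega$, reduce the metric and the curvature to a cubic ODE, and translate the singularity data into divisor, residue and exactness conditions on $\omega$; you also correctly isolate the decisive new phenomenon, namely that a cusp forces the cubic to degenerate to a double root at the limiting curvature value, with $K-K_*\sim 1/\log|z|$. But as written the argument has genuine gaps. First, the necessity part cannot begin by ``applying the structural results of \cite{CW2} to realize the metric in the form (\ref{sys0})'': that structure theorem is proved only for HCMU metrics all of whose singularities are conical, and it builds in the non-degeneracy $K_1>K_2>-K_1/2$ (three distinct roots of the cubic), which is exactly what fails in the presence of cusps --- the paper itself stresses that (\ref{sys0}) does not cover the cusp case. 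The paper instead re-derives the structure from scratch in the mixed setting: Proposition \ref{gradeprop} produces the identity $-4\sqrt{-1}\nabla K(K)=-\frac{K^3}{3}+CK+C'$ with an a priori unknown cubic (this in turn needs Proposition \ref{finiteprop}, whose proof must rule out essential singularities of the local holomorphic function $4e^{-2\varphi}K_{\bar z}$ at the conical points), and Proposition \ref{propertiesofomega} then constructs $\omega$ and the equations (\ref{dKequ}), (\ref{gequ}).

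Second, everything you label ``the technical heart'' is precisely what remains unproven, and it is not routine. The paper needs Lemma \ref{cusplemma} (each cusp value of $K$ is a root of the cubic) and then three separate exclusion lemmas (Lemmas \ref{C1lem}, \ref{C2lem}, \ref{C3lem}) ruling out, respectively, a pair of complex roots, a double root at the positive value with the cusp at the simple negative root, and three distinct real roots; each exclusion rests on a Laurent-expansion argument at a cusp in which the contradiction comes from the non-existence of $\lim_{z\to 0} z^{k-1}/\bar z^{k-1}$. Moreover, even after Theorem \ref{cuspth1} pins the cubic down as $-\frac13(K-\mu)^2(K+2\mu)$ with all cusp values equal to $\mu$, local analysis still leaves two possible ranges, $K<\mu$ or $\mu<K<-2\mu$, and no local argument can distinguish them: the paper's Theorem \ref{boundth} settles this globally --- if $K<\mu$ held, then by Lemmas \ref{boundlemma1}--\ref{boundlemma3} all poles of $\omega$ would be the cusps $p_i$ and all their residues would be negative, contradicting the fact that the residues of a meromorphic $1$-form on the compact surface $\Sigma$ sum to zero. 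This global residue-sum step, which is also what ultimately produces the sign $Res_{p_i}(\omega)>0$ in condition (b) (Theorem \ref{cuspth2}) and the values $Res_{e_s}(\omega)=Res_{q_{l'}}(\omega)/\alpha_{l'}=-\frac{1}{3\mu^2}=\Lambda$ (Theorems \ref{sth} and \ref{conicalth}), is absent from your outline, so the necessity part of your proposal would not close up even once the cusp asymptotics are granted.
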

\noindent {\bf Here we declare that in the following any HCMU metric that we mention is not a CSC metric}.
In this paper we can get any HCMU metric with cusp singularities and conical singularities is
 determined by the following system:
 \BE\label{sys0'}
\begin{cases}
 \cfrac{dK}{-\frac{1}{3}(K-\mu)^2(K+2\mu)}=\omega+\bar{\omega} \\
g=-\frac{4}{3}(K-\mu)^2(K+2\mu)\omega \bar{\omega}\\
K(p_0)=K_0,~\mu<K_0<-2\mu,~p_0 \in \Sigma \setminus \{zeros~and~poles~of~\omega\},
\end{cases}
\EE
where $\mu<0$ which is the minimum of the Gauss curvature $K$, $-2\mu$ is the maximum of the Gauss curvature $K$ and
$\omega$ is a meromorphic 1-form on $\Sigma$ with the same properties as the meromorphic 1-form
in (\ref{sys0}). From (\ref{sys0'}) we can obtain that the metric $g$ just has cusp singularities at the poles of
$\omega$ where the residues of $\omega$ are all positive and the Gauss curvature $K$ just has the minimum $\mu$ at
the cusp singularities of $g$.\par

The contents of this paper will be organized as following. In Section 2, we will give definitions of conical singularity and
cusp singularity and review some local results about an extremal Hermitian metric around a singularity. Then in Section 3.1, we will
prove the necessity of the main theorem. In this section, we first study $\nabla K$ of an HCMU metric, then we define the dual
1-form of $\nabla K$ as the character 1-form of the metric, and then using the character 1-form we study the properties of $g$
and $K$ at cusp singularities of $g$, smooth singularities of $\nabla K$ and conical singularities of $g$,
finally we give formulas for integrals of $K^n$ over $\Sigma$, $n=0,1,2,\ldots$. In Section 3.2, we will prove the sufficiency of the main theorem.
In this section, first we study the solution of an ODE,
then we use the solution to construct an HCMU metric which satisfies the given conditions. In Section 4, we will discuss the
existence of the meromorphic 1-form in (\ref{sys0}).

\section{Definitions of singularities and local behaviors of an \ehm}
\begin{definition}[\cite{T}]\label{conedef}
 Let $X$ be a Riemann surface, $p\in X$. A conformal metric $g$ on $X$ is said to have a conical singularity at
 $p$ with the singular angle $2 \pi \alpha(\alpha>0)$ if in a neighborhood of $p$
 \begin{equation}\label{ds^2}
  g=e^{2\varphi}|dz|^2,
 \end{equation}
 where $z$ is a local complex coordinate defined in the neighborhood of $p$ with $z(p)=0$ and
 \begin{equation}\label{coneequ1}
  \varphi-(\alpha-1)\ln|z|
 \end{equation}
 is continuous at $0$.
\end{definition}

\begin{remark}
 By (\ref{ds^2}) and (\ref{coneequ1}), in a neighborhood of a conical singularity $p$, the metric $g$ can also be
 expressed as
 \begin{equation}\label{coneequ2}
  g=\frac{h}{|z|^{2-2\alpha}}|dz|^2,
 \end{equation}
 where $h$ is a positive continuous function in the neighborhood of $p$ and is smooth in the
neighborhood except for the
 origin. By (\ref{coneequ1}), we have the following limit at $p$
 \BE\label{conelim}
  \lim_{z\rightarrow 0} \frac{\varphi+\ln|z|}{\ln|z|}=\alpha.
 \EE
 \end{remark}
\begin{definition}[\cite{Ch2}]\label{weakcuspdef}
Let $X$ be a Riemann surface, $p \in X$. A conformal metric $g$ which has finite area and finte
Calabi energy on $X$ is said to have a weak
cusp singularity at $p$ if in a neighborhood of $p$
\BE
 g=e^{2\varphi} |dz|^2,
\EE
 where $z$ is a local complex coordinate defined in the neighborhood of $p$ with $z(p)=0$ and
 \BE\label{weakcuspequ}
  \liminf_{r\longrightarrow 0}\int_0^{2\pi} r ~\frac{\partial(\varphi+\ln r)}{\partial r}d\theta=0
(z=re^{\sqrt{-1}\theta}).
 \EE
\end{definition}
\begin{definition}\label{cuspdef}
 Let $X$ be a Riemann surface, $p \in X$. A conformal metric $g$ on $X$ is said to have a cusp
singularity at $p$ if
in a neighborhood of $p$
\BE
 g=e^{2\varphi} |dz|^2,
\EE
 where $z$ is a local complex coordinate defined in the neighborhood of $p$ with $z(p)=0$ and
 \BE\label{cusplim}
  \lim_{z\rightarrow 0} \frac{\varphi+\ln|z|}{\ln|z|}=0.
 \EE
\end{definition}
\par
X.X.Chen in \cite{Ch1} proved the following theorem:
\begin{theorem}[\cite{Ch1}]\label{finiteth}
 Let $g=e^{2\varphi}|dz|^2$ be a metric on a punctured disk $D \setminus \{0\}$ with finite area and finite Calabi
 energy. Define $\phi(r)=\frac{1}{2\pi}\int_{0}^{2\pi} \varphi(r\cos \theta, r \sin \theta)d\theta$. The following
 three statements hold true:
 \begin{itemize}
  \item[(1)] $\dps{\lim_{r\rightarrow 0}} (\varphi+\ln r)=-\infty$.
  \item[(2)] $\dps{\lim_{r\rightarrow 0}}\phi'(r)r$ exists and is finite.
  \item[(3)] There exist a constant $\beta \in (0,1)$ and two constants $C_1$ and $C_2$  such that
  $$ \frac{1}{\beta} (\phi(r)+\ln r)+C_1 \leq \varphi+\ln r\leq \beta(\phi(r)+\ln r)+C_2.
  $$
 \end{itemize}
\end{theorem}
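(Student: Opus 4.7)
The plan is to establish (2), (3), (1) in that order, since the first two are the substantive steps and (1) is an easy consequence of them. I would begin with the radial identity that drives everything. Using $K=-e^{-2\varphi}\Delta\varphi$ (Euclidean Laplacian) and the divergence theorem on $\{|z|<r\}$,
\begin{equation*}
r\phi'(r)=\frac{1}{2\pi}\int_{|z|<r}\Delta\varphi\,dA=-\frac{1}{2\pi}\int_{|z|<r}K\,dg.
\end{equation*}
For (2), the Cauchy--Schwarz inequality
\begin{equation*}
\Bigl|\int_{|z|<r}K\,dg\Bigr|\leq\Bigl(\int_{|z|<r}K^{2}\,dg\Bigr)^{\!1/2}\Bigl(\int_{|z|<r}dg\Bigr)^{\!1/2}
\end{equation*}
combined with the absolute continuity of the finite measures $dg$ and $K^{2}dg$ shows that both factors tend to $0$ as $r\to0$, so $\lim_{r\to 0}r\phi'(r)=0$, which is finite. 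This establishes (2).

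The main obstacle is (3), the two-sided Harnack-type comparison between $\varphi+\ln r$ and $\phi+\ln r$ with a fractional exponent $\beta\in(0,1)$. Write $\varphi(r,\theta)=\phi(r)+\psi(r,\theta)$ where $\psi$ has mean zero on each circle, so $\psi$ captures the angular oscillation. The goal is to control $\|\psi\|_{L^{\infty}(|z|=r)}$ by a strictly smaller multiple of $|\phi+\ln r|$ plus a constant. My approach would be a dyadic iteration on the annuli $A_{k}=\{2^{-k-1}\le |z|\le 2^{-k}\}$: on each $A_k$ the one-dimensional Sobolev embedding $\|\psi\|_{L^{\infty}(S^{1}_{r})}\leq C\|\partial_{\theta}\varphi\|_{L^{2}(S^{1}_{r})}$ reduces matters to estimating circular $L^{2}$-norms of $\nabla\varphi$, which are in turn controlled by feeding the PDE $\Delta\varphi=-Ke^{2\varphi}$ into a Moser-style $L^{p}\to L^{\infty}$ bootstrap on $A_k$. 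The nonlinear coupling between $\varphi$ and the right-hand side $Ke^{2\varphi}$ is precisely what forces a strict contraction $\beta<1$ rather than a clean linear bound. The finite Calabi energy $\int K^{2}dg<\infty$ together with the finite area $\int dg<\infty$ provide just enough integrability of $\Delta\varphi$ in the weighted norm $\int(\Delta\varphi)^{2}e^{-2\varphi}dA$ to close the iteration uniformly in $k$, which is the crux of the estimate.

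Finally (1) follows from (2) and (3). Setting $u(r)=\phi(r)+\ln r$, the identity $ru'(r)=r\phi'(r)+1\to 1$ from (2) implies that for every $\delta\in(0,1)$ one has $u'(s)\ge (1-\delta)/s$ for $s$ sufficiently small, and integrating from $r$ to $1$ yields $u(r)\le u(1)-(1-\delta)|\ln r|\to-\infty$. The upper inequality in (3) then gives $\varphi+\ln r\leq\beta\,u(r)+C_{2}\to-\infty$, proving (1) pointwise on the punctured disk. I expect (3) to be by far the main obstacle, since it is the only step that goes beyond purely radial-average computations and requires a genuine regularity estimate for the nonlinear equation $\Delta\varphi=-Ke^{2\varphi}$ under only the two finiteness hypotheses; everything else in the theorem is elementary ODE reasoning once the radial identity and (3) are in hand.
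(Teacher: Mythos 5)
A preliminary remark: the paper never proves this theorem --- it is quoted verbatim from \cite{Ch1} --- so your attempt can only be judged on its own merits, and it contains a genuine error at its very first step. The identity
\[
r\phi'(r)=\frac{1}{2\pi}\int_{|z|<r}\Delta\varphi\,dA
\]
is false: $\varphi$ lives only on the \emph{punctured} disk, and applying the divergence theorem over the full disk $\{|z|<r\}$ silently discards the flux through the puncture, which is exactly the quantity the theorem is about. The correct identity is the annulus version
\[
r\phi'(r)-\epsilon\phi'(\epsilon)=-\frac{1}{2\pi}\int_{\epsilon<|z|<r}K\,dg,
\]
and your Cauchy--Schwarz observation (which does correctly give $K\in L^{1}(dg)$ and $\int_{0<|z|<r}|K|\,dg\to0$) then shows that $\lim_{\epsilon\to0}\epsilon\phi'(\epsilon)$ \emph{exists and is finite} --- which is precisely statement (2) --- but not that it vanishes. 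Indeed it does not vanish in general: the flat cone $g=|z|^{2\alpha-2}|dz|^{2}$ ($\alpha>0$, $\alpha\neq1$) has finite area and zero Calabi energy, yet $r\phi'(r)\equiv\alpha-1$; the cusp $g=|dz|^{2}/\bigl(|z|\ln|z|\bigr)^{2}$ has $K\equiv-1$, finite area and Calabi energy, and $r\phi'(r)\to-1$. The limit in (2) is a geometric invariant recording the cone angle or cusp; the theorem deliberately refrains from claiming it is zero. This error then breaks your proof of (1): you use $ru'(r)=r\phi'(r)+1\to1$, whereas in general $ru'(r)\to1+c$ with $c=\lim r\phi'(r)$, and for the cusp ($c=-1$) your bound $u'(s)\geq(1-\delta)/s$ and the conclusion $u(r)\leq u(1)-(1-\delta)|\ln r|$ are simply false (there $u(r)=-\ln(-\ln r)$). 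To rescue (1) one must genuinely use the finite-area hypothesis --- e.g.\ Jensen's inequality gives $\int_{0}^{1}e^{2(\phi(r)+\ln r)}\,dr/r<\infty$, which rules out $c<-1$ and still forces $\phi+\ln r\to-\infty$ in the borderline case $c=-1$. Your proof of (1) never invokes finite area at all, which is a red flag: (1) fails without it (take $\varphi=-\ln|z|$, so $K\equiv0$ and $\varphi+\ln|z|\equiv0$).

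Part (3), which you yourself identify as the crux, is not proved: what you give is a program (dyadic annuli, circle Sobolev embedding, a ``Moser-style bootstrap'', a weighted energy) with none of the key steps carried out. In particular the sketch never explains where the strict constant $\beta<1$ comes from, never actually compares the angular oscillation to the radial quantity $\phi(r)+\ln r$ (which is what (3) asserts; a uniform bound on the oscillation is neither true nor what is claimed), and never addresses the harmonic part of $\varphi$, which carries the $c\ln|z|$ term responsible for the cone or cusp and is invisible to any estimate on $\Delta\varphi$ alone. The classical route to such two-sided estimates (going back to Huber-type arguments) is potential-theoretic: split $\varphi$ on a small punctured disk into a harmonic part plus the Newtonian potential of $\Delta\varphi=-Ke^{2\varphi}\in L^{1}$, use that the curvature mass $\int_{0<|z|<\delta}|K|\,dg$ can be made arbitrarily small by shrinking $\delta$ (your own Cauchy--Schwarz step), bound the oscillation of the potential of a mass $\epsilon$ by $C\epsilon\ln(1/|z|)$, and control the harmonic part through its expansion; the smallness of the mass is what produces constants strictly inside $(0,1)$. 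As submitted, then: (2) rests on a false identity and reaches a false conclusion, (1) inherits that error and omits an essential hypothesis, and (3) is an unproved outline, so the proposal does not constitute a proof.
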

\noindent By Theorem \ref{finiteth}, we have the following corollary:
\begin{corollary}\label{finitecor}
 Let $g=e^{2\varphi}|dz|^2$ be a metric on a punctured disk $D \setminus \{0\}$ with finite area and finite
 Calabi energy. Then the following two statements are equivalent to each other:
 \begin{itemize}
 \item[(1)] $g$ has a cusp singularity at $0$.
 \item[(2)] $g$ has a weak cusp singularity at $0$.
  \end{itemize}
\end{corollary}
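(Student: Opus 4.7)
The plan is to reduce both (1) and (2), by means of Theorem~\ref{finiteth}, to one and the same radial identity for the azimuthal average $\phi(r):=\frac{1}{2\pi}\int_0^{2\pi}\varphi(r\cos\theta,r\sin\theta)\,d\theta$, namely
$$\lim_{r\to 0^+}\bigl(r\phi'(r)+1\bigr)=0.$$
Once each of (1) and (2) is rewritten in this form, the equivalence becomes automatic. I will use each of the three statements of Theorem~\ref{finiteth} exactly once.

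For (2), I will compute the integrand explicitly. Using $\partial(\varphi+\ln r)/\partial r=\partial\varphi/\partial r+1/r$ and integrating in $\theta$, one obtains
$$\int_0^{2\pi} r\,\frac{\partial(\varphi+\ln r)}{\partial r}\,d\theta = 2\pi\bigl(r\phi'(r)+1\bigr),$$
so condition (\ref{weakcuspequ}) reads $\liminf_{r\to 0^+}(r\phi'(r)+1)=0$. Statement (2) of Theorem~\ref{finiteth} asserts that $\lim_{r\to 0^+}r\phi'(r)$ exists and is finite, hence the liminf is a genuine limit and coincides with the target identity.

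For (1), I will invoke statements (1) and (3) of Theorem~\ref{finiteth}. Statement (1) gives $\varphi+\ln r\to-\infty$ as $r\to 0$; averaging in $\theta$ yields $\phi(r)+\ln r\to-\infty$ as well. Dividing the two-sided estimate in statement (3) by the negative quantity $\ln r$, and noting that the additive constants $C_1/\ln r$ and $C_2/\ln r$ vanish in the limit while the bounds are independent of $\theta$, I see that the cusp condition $\lim_{z\to 0}(\varphi+\ln|z|)/\ln|z|=0$ is equivalent to $\lim_{r\to 0^+}(\phi(r)+\ln r)/\ln r=0$. Applying L'H\^{o}pital's rule to this quotient---both numerator and denominator tend to $-\infty$, and the ratio of derivatives $r\phi'(r)+1$ possesses a limit by statement (2)---recasts (1) as $\lim_{r\to 0^+}(r\phi'(r)+1)=0$, closing the loop.

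No serious obstacle appears: the corollary is essentially a calculus translation once Theorem~\ref{finiteth} is available. The one subtle point to watch is that cusp singularity is phrased as a limit $z\to 0$, i.e.\ uniform in $\theta$, so I need to observe that the $\theta$-independence of the bounds in statement (3) automatically supplies this uniformity on both sides of the sandwich; no additional regularity argument for $\phi$ is required beyond what Theorem~\ref{finiteth} already provides.
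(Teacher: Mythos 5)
Your proposal is correct and follows essentially the same route as the paper: both reduce the cusp and weak cusp conditions to statements about the azimuthal average $\phi(r)$ via the identity $\int_0^{2\pi} r\,\partial(\varphi+\ln r)/\partial r\,d\theta=2\pi\bigl(r\phi'(r)+1\bigr)$, use statement (2) of Theorem~\ref{finiteth} with L'H\^{o}pital's rule to pass between $r\phi'(r)+1$ and $(\phi(r)+\ln r)/\ln r$, and use the $\theta$-independent sandwich of statement (3) to transfer the averaged condition back to $\varphi$ itself. The only difference is organizational: you route both conditions through the single normal form $\lim_{r\to 0^+}(r\phi'(r)+1)=0$, whereas the paper proves the two implications separately with the same ingredients.
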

\begin{proof}(1)$\Longrightarrow$(2): If $g$ has a cusp singularity at $0$, that is,
 $ \dps{\lim_{r\rightarrow 0} \frac{\varphi+\ln r}{\ln r}=0},
 $
 then
 $$\dps{ \lim_{r\rightarrow 0}\cfrac{\frac{1}{2\pi}\int_0^{2\pi}(\varphi+\ln r) d\theta}{\ln r}=0},~\text{i.e.}~
 \dps{\lim_{r\rightarrow 0}\frac{\phi(r)+\ln r}{\ln r}=0}.
 $$
 By (1) in Theorem \ref{finiteth}, $\dps{\lim_{r\rightarrow 0}}(\phi(r)+\ln r)=-\infty$.
By (2) in Theorem \ref{finiteth}, the limit
 $$\dps{\lim_{r\rightarrow 0}\frac{(\phi(r)+\ln r)'}{(\ln r)'}=\lim_{r\rightarrow 0}(\phi'(r)r+1)}$$ exists and is finite.
 Then by L'Hospital's Rule, $\dps{\lim_{r\rightarrow 0}(\phi'(r)r+1)=\lim_{r\rightarrow 0}\frac{\phi(r)+\ln r}{\ln r}=0}$,
 which implies that $0$ is a weak cusp singularity of $g$. \\
 (2)$\Longrightarrow$(1): First by (2) in Theorem \ref{finiteth} and L'Hospital's Rule,
 \[ \lim_{r\rightarrow 0} \frac{\phi(r)+\ln r}{\ln r}=\lim_{r\rightarrow 0}(\phi'(r)r+1).
 \]
Since $0$ is a weak cusp singularity of $g$, $\dps{\liminf_{r\longrightarrow 0}\int_0^{2\pi} r ~\frac{\partial(\varphi+\ln r)}
{\partial r}d\theta=0}$, which means
\[ \lim_{r\rightarrow 0} \frac{\phi(r)+\ln r}{\ln r}=\lim_{r\rightarrow 0}(\phi'(r)r+1)=0.
\]
By (3) in Theorem \ref{finiteth}, there exist $\beta \in (0,1)$ and two constants $C_1$ and
$C_2$ such that
\[ \frac{1}{\beta} (\phi(r)+\ln r)+C_1 \leq \varphi+\ln r\leq \beta(\phi(r)+\ln r)+C_2, \]
so \[ \frac{1}{\beta} \frac{\phi(r)+\ln r}{\ln r}+\frac{C_1}{\ln r} \geq \frac{\varphi+\ln r}{\ln r}
\geq \beta \frac{\phi(r)+\ln r}{\ln r}+\frac{C_2}{\ln r}.
\]
Then $\dps{\lim_{r\rightarrow 0}\frac{\varphi+\ln r}{\ln r}=0}$, which shows $0$ is a cusp singularity of $g$.
We prove the corollary.
\end{proof}

Then X.X.Chen in \cite{Ch2} proved the theorem:
\begin{theorem}[\cite{Ch2}]\label{ehmfinite}
 Let $g=e^{2\varphi}|dz|^2$ be an \ehm~in a punctured disk $D \setminus \{0 \}$ with finite
area and finite Calabi energy,
 then \BE
 \lim_{z\rightarrow 0} |z|^2 \cdot K \cdot e^{2\varphi}=0.
\EE
\end{theorem}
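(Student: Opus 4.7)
The plan is to reduce the pointwise assertion to a local $L^\infty$ bound on the Gauss curvature near the singularity, and then establish this bound by exploiting the holomorphic quadratic differential structure that the \ehm~equation forces on $K_{,zz}$.

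First I would pass to cylindrical coordinates: setting $t = \ln|z|$ and $u(t,\theta) = \varphi + t$, one has $g = e^{2u}(dt^2 + d\theta^2)$, and a direct computation yields
\[
|z|^2\,K\,e^{2\varphi} \;=\; K\,e^{2u} \;=\; -(\partial_t^2 + \partial_\theta^2)\,u.
\]
The integrability hypotheses become $\int e^{2u}\,dt\,d\theta < \infty$ and $\int (\partial_t^2+\partial_\theta^2)u)^2 e^{-2u}\,dt\,d\theta < \infty$, and Theorem~\ref{finiteth}(1) gives $u \to -\infty$ pointwise as $t \to -\infty$. Thus the goal becomes $(\partial_t^2+\partial_\theta^2)u \to 0$. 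The key reduction is this: if $|K| \leq M$ on some punctured neighborhood of $0$, then $\lvert |z|^2 K e^{2\varphi}\rvert \leq M e^{2u} \to 0$, so the theorem reduces to proving a local $L^\infty$ bound on $K$ near the puncture.

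To establish this local bound, I would exploit that (\ref{ehmequ2}) is equivalent to $\partial_{\bar z} K_{,zz} = 0$, so $K_{,zz}\,dz^2$ is a holomorphic quadratic differential on $D \setminus \{0\}$. Integration by parts on annular domains $\{r < |z| < \epsilon\}$ (using the \ehm~equation $\Delta_g K = C - K^2$) yields an identity relating a weighted $L^2$ norm of $K_{,zz}$ to $\int (C - K^2)^2\,dg$, which is finite by the area and Calabi-energy hypotheses; the boundary contributions on $\{|z| = r\}$ should be shown to vanish along a suitable sequence $r_n \to 0$ using Theorem~\ref{finiteth}. This would force $K_{,zz}$ to extend meromorphically at $0$ with pole of controlled order. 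Then, viewing $K_{,zz} = K_{zz} - 2\varphi_z K_z$ as a first-order linear ODE in $z$ for $K_z$ and integrating twice, one recovers an at most polynomial-in-$|\log|z||$ growth bound for $K$; combined with the decay rate of $|z|^2 e^{2\varphi} = e^{2u}$ quantified by Theorem~\ref{finiteth}(3), this completes the proof.

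The hardest part is the integration by parts step: the boundary terms on the inner circle $\{|z| = r\}$ are not automatically controlled, and their vanishing (at least along a subsequence $r_n \to 0$) is the core technical content. Handling them requires the combined use of Theorem~\ref{finiteth}, the $L^1(dg)$ integrability of $K$ (which follows from $K \in L^2(dg)$ together with the finite area), and a judicious choice of cutoff functions that exploits the cusp-like behavior $\varphi + \ln r \to -\infty$.
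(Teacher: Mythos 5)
The paper never proves Theorem \ref{ehmfinite}: it is imported verbatim from \cite{Ch2} and used as a black box. So there is no internal proof to compare your proposal against; what follows assesses your plan on its own terms.

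Your setup is correct: with $u=\varphi+\ln|z|$ one indeed has $|z|^2Ke^{2\varphi}=Ke^{2u}=-(\partial_t^2+\partial_\theta^2)u$, and $e^{2u}\to 0$ by (1) of Theorem \ref{finiteth}, so an $L^\infty$ bound on $K$ near the puncture would finish the proof. The genuine gap is in your closing step, and it fails precisely in the case this paper is about. You propose to prove only that $K$ grows at most polynomially in $|\ln|z||$, and to combine this with ``the decay rate of $e^{2u}$ quantified by Theorem \ref{finiteth}(3).'' But Theorem \ref{finiteth}(3) quantifies no decay rate in $|z|$: it only pinches $u$ between $\beta$ and $1/\beta$ times its circular mean $\phi(r)+\ln r$, for an unspecified $\beta\in(0,1)$, and that mean may tend to $-\infty$ very slowly. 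This is exactly what happens at a cusp: for the hyperbolic cusp $e^{2\varphi}=|z|^{-2}|\ln|z||^{-2}$ (finite area, finite Calabi energy, $\lim_{r\to 0}r\phi'(r)=-1$) one has $u=-\ln|\ln|z||$, so $e^{2u}=|\ln|z||^{-2}$ decays only logarithmically in $|z|$; in general the best upper bound extractable from Theorem \ref{finiteth}(3) in this regime is $e^{2u}\lesssim|\ln|z||^{-2\beta}$. If all you know about $K$ is $|K|\lesssim|\ln|z||^{N}$, then $|K|e^{2u}\lesssim|\ln|z||^{N-2\beta}$, which does not tend to zero once $N\ge 2\beta$ --- and nothing in your argument controls $N$ (or $\beta$). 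In other words, for cusp-type singularities ``polynomial in $\log$'' growth of $K$ is simply not enough; you would need $K=o(e^{-2u})$, which is essentially the boundedness statement you set out to establish, so the argument is circular at the decisive point.

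There is a second, structural problem: the mechanism that is supposed to deliver any bound on $K$ is not in place. The integration-by-parts identity relating a weighted $L^2$ norm of $K_{,zz}$ to the Calabi energy, and above all the vanishing of the inner boundary terms along some $r_n\to 0$, constitute the entire analytic content here, and your proposal defers them rather than proving them. Moreover, the final reduction --- treating $K_{,zz}=K_{zz}-2\varphi_zK_z$ ``as a first-order linear ODE in $z$ for $K_z$ and integrating twice'' --- is not well posed: the integrating-factor identity $\partial_z\bigl(e^{-2\varphi}K_z\bigr)=e^{-2\varphi}K_{,zz}$ involves $\partial_z$ acting on non-holomorphic functions, so inverting it introduces an arbitrary antiholomorphic ``constant of integration'' and a path-dependent primitive of the non-holomorphic integrand $e^{-2\varphi}K_{,zz}$; without a priori control of $\varphi$ --- which is exactly what is lacking --- no growth bound on $K$ comes out of this step. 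So both halves of the plan, producing a bound on $K$ and then using it, have holes; the first is unexecuted and the second, as stated, would not suffice even if the first were carried out.
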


Further X.X.Chen in \cite{Ch2} proved the following theorem:
\begin{theorem}[\cite{Ch2}]\label{ehmcuspth}
 Let $g=e^{2\varphi}|dz|^2$ be an \ehm~on a punctured disk $D \setminus \{0\}$ with finite area and
finite Calabi energy and suppose $g$ has
 a weak cusp singular point at $z=0$. Then the following statements hold
 \begin{itemize}
 \item[(1)] There exists a constant $C_1$ such that
  \BE
   |K_{,zz}|\cdot |z| \leq C_1.
  \EE
 \item[(2)] There exists a constant $C_2$ such that
  \BE\
   |K_{,z}| \leq C_2 \cdot |z| \cdot e^{2\varphi}.
  \EE
 \item[(3)] There exists a negative constant $C_3$ such that
\[\dps{\lim_{z\rightarrow 0}}K=C_3.
\]
 \end{itemize}

\end{theorem}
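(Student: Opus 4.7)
The plan is to exploit the fact that the \ehm~equation (\ref{ehmequ2}) forces $K_{,zz}$, viewed as a function of $z$, to be holomorphic on the punctured disk $D\setminus\{0\}$. In our local complex coordinate the covariant derivative is $K_{,zz}=K_{zz}-2\varphi_z K_z$, so $f(z):=K_{,zz}$ admits a Laurent expansion $f(z)=\sum_{n\in\mathbb{Z}}a_nz^n$ on $D\setminus\{0\}$. Part (1) of the theorem is equivalent to showing $a_n=0$ for every $n\leq-2$, while parts (2) and (3) will follow by integrating the information extracted in (1) and combining it with Theorem~\ref{ehmfinite}.

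For (1), my strategy is to extract the Laurent coefficients through contour integrals $a_{-k-1}=\frac{1}{2\pi i}\oint_{|z|=r}z^k f(z)\,dz$ and to rewrite each integrand using $f=K_{zz}-2\varphi_z K_z$ together with the PDE $\varphi_{z\bar z}=-\tfrac{1}{4}Ke^{2\varphi}$. Integration by parts on the circle $|z|=r$ converts these into angular averages of $K$, of $\varphi_z K$, and of $\partial_r(\varphi+\ln r)$, multiplied by suitable powers of $r$. The finite area, the finite Calabi energy $\int K^2 e^{2\varphi}<\infty$, the two-sided control of $\varphi+\ln r$ furnished by Theorem~\ref{finiteth}(3), and above all the weak cusp condition (\ref{weakcuspequ}), should force each such expression to vanish along a suitably chosen sequence $r_j\to 0$; since $a_n$ is independent of $r$, this forces all coefficients of index $\leq-2$ to be zero, giving the simple-pole bound in (1).

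Once (1) is in hand, the identity $K_{zz}-2\varphi_z K_z=f$ with $|f(z)|\leq C_1/|z|$ becomes a first-order linear equation for $K_z$ along the $z$-direction. Integrating it radially and using Theorem~\ref{finiteth}(2) (the finiteness of $\lim_{r\to 0}\phi'(r)r$) together with Theorem~\ref{ehmfinite} to absorb the nonlinear contribution from $K$, I expect to obtain the pointwise estimate $|K_{,z}|\leq C_2\,|z|\,e^{2\varphi}$ of (2). For (3), combining (2) with its conjugate gives $|\nabla K|$ bounds that are integrable in polar coordinates near $0$, so $K$ extends continuously to the puncture with some limit $C_3$; the sign $C_3<0$ will come from Theorem~\ref{ehmfinite}, since the blow-up of $e^{2\varphi}$ at a cusp is incompatible with $|z|^2 K e^{2\varphi}\to 0$ unless the limiting value of $K$ is strictly negative.

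The main obstacle I expect is step one: translating the \emph{liminf}-type weak cusp condition (\ref{weakcuspequ}) into the rigid vanishing of infinitely many Laurent coefficients of a holomorphic function. One must carefully select the sequence $r_j\to 0$ along which the liminf in (\ref{weakcuspequ}) is attained and show that every boundary term coming from the contour integrals decays to zero along that same sequence, simultaneously with the required power of $r$ out front. It is this delicate interplay between the extremal equation, the finite Calabi energy, and the sharp logarithmic estimates of Theorem~\ref{finiteth} that is the heart of the proof.
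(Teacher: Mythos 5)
This theorem is not proved in the paper at all: it is quoted from X.X.~Chen's work \cite{Ch2} and used as a black box, so your proposal can only be judged on its own internal soundness, and there it has genuine gaps. The correct parts are the framework: $K_{,zz}=K_{zz}-2\varphi_zK_z$ is indeed holomorphic on $D\setminus\{0\}$ by (\ref{ehmequ2}), $\varphi_{z\bar z}=-\tfrac14Ke^{2\varphi}$, and statement (1) is indeed equivalent to the vanishing of the Laurent coefficients $a_n$ for $n\le-2$. The gap is that the step you yourself flag as ``the heart of the proof'' is not a technicality to be checked but the entire theorem: the contour integrals $\oint_{|z|=r}z^kf\,dz$ involve circle integrals of $K_{zz}$ and of $\varphi_zK_z$ weighted by powers of $r$, and none of the hypotheses control such quantities. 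Finite area and finite Calabi energy bound only $\int e^{2\varphi}$ and $\int K^2e^{2\varphi}$ --- zeroth-order information on $K$ --- while the weak cusp condition (\ref{weakcuspequ}) bounds only the circle average of $r\,\partial_r(\varphi+\ln r)$, and only along a liminf sequence. Even the most natural first-order quantity, $\int|\nabla K|^2\,dg$, is not available a priori: integrating $\Delta_gK=C-K^2$ by parts would require an $L^3$ bound on $K$ together with control of boundary terms at the puncture, which is essentially what one is trying to prove. So ``should force each such expression to vanish'' is a restatement of the problem, not an argument.

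Two further steps are wrong as stated, not merely incomplete. For (3), the sign argument is vacuous: by Theorem \ref{finiteth}(1), any metric with finite area and energy satisfies $\varphi+\ln|z|\to-\infty$, hence $|z|^2e^{2\varphi}=e^{2(\varphi+\ln|z|)}\to0$; consequently $\lim_{z\to0}|z|^2Ke^{2\varphi}=0$ holds automatically for \emph{any} bounded $K$, whatever its sign, so Theorem \ref{ehmfinite} cannot exclude $C_3\ge0$ and the strict negativity must come from another mechanism. Similarly, continuity of $K$ does not follow from (2) by ``integrability in polar coordinates'': (2) gives $|dK|\le C|z|e^{2\varphi}=Ce^{2(\varphi+\ln|z|)}/|z|$, and $\varphi+\ln|z|\to-\infty$ yields only $|dK|=o(1/|z|)$, which is not integrable along radii; one needs a quantitative decay rate (as for the model cusp, where $e^{2\varphi}\sim|z|^{-2}(\ln|z|)^{-2}$) that neither Theorem \ref{finiteth} nor the liminf condition supplies. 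Finally, (2) is not obtained by ``integrating radially'' a linear ODE: the identity $K_{,zz}=e^{2\varphi}\partial_z(e^{-2\varphi}K_z)$ is a $\partial_z$-equation, and recovering a pointwise bound on $K_z$ requires simultaneously controlling $\partial_{\bar z}(e^{-2\varphi}K_z)=\tfrac14(C-K^2)-2\varphi_{\bar z}e^{-2\varphi}K_z$ coming from (\ref{ehmequ1}); this is a coupled first-order system, not a radial ODE. In short, the proposal correctly identifies the holomorphic object to study, but the decisive analytic content of all three parts is either missing or rests on reasoning that fails.
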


\section{Proof of the main theorem}
 \subsection{Proof of the necessity of the main theorem}
 Let $\Sigma^* = \S*$. Since $g$ is an HCMU metric on $\Sigma^*$, then on
 $\Sigma^*$
 \[K_{,zz}=0,
 \]
 which is equivalent to the fact that
 \[ \nabla K \triangleq \sqrt{-1}K^{,z}\frac{\partial}{\partial z}
 \]
 is a holomorphic vector field on $\Sigma^*$. Since an HCMU metric is an \ehm, on $\Sigma^*$
 \BE\label{ehmequ3}
  \Delta_{g}K+K^2=C.
 \EE
We will first prove the following proposition:
\begin{proposition}\label{gradeprop}
 There exists a real constant $C'$ such that
\BE\label{gradequ}
 -4 \sqrt{-1}\nabla K(K)=-\frac{K^3}{3}+CK+C'~~~\text{on}~~\Sigma^*.
\EE
\end{proposition}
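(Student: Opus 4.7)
The plan is to establish that
\[ F := -4\sqrt{-1}\,\nabla K(K) + \frac{K^{3}}{3} - CK \]
is a real constant on $\Sigma^{*}$; setting $C' := F$ then yields (\ref{gradequ}). Since $\Sigma^{*}$ is a connected open Riemann surface, it suffices to verify that $F$ is real-valued and that $\bar\partial F = 0$, for a real holomorphic function on a connected open set is necessarily a real constant.

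The reality of $F$ is immediate. In a local holomorphic coordinate $z$ with $g = e^{2\varphi}|dz|^{2}$, one has $\nabla K(K) = \sqrt{-1}\,K^{,z}K_{,z} = \sqrt{-1}\,g^{z\bar z}|K_{,z}|^{2}$, which is purely imaginary because $g^{z\bar z}$ is real and positive. Hence $-4\sqrt{-1}\,\nabla K(K)$ is a real positive multiple of $e^{-2\varphi}|K_{,z}|^{2}$, and $F$ is manifestly real.

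To check $\bar\partial F = 0$, I would compute $\partial_{\bar z}F$ in local coordinates using two inputs. The HCMU condition $K_{,zz} = 0$ is equivalent (as noted just before the proposition) to $\nabla K$ being a holomorphic vector field on $\Sigma^{*}$, so $\partial_{\bar z}K^{,z} = 0$, and therefore $\partial_{\bar z}(K^{,z}K_{,z}) = K^{,z}K_{,z\bar z}$. The extremal Hermitian equation (\ref{ehmequ1}) in local coordinates reads $4e^{-2\varphi}K_{,z\bar z} = C - K^{2}$, which converts $K_{,z\bar z}$ into a polynomial in $K$ times the conformal factor. Substituting, and using the identity $K^{,z}\cdot e^{2\varphi} \propto K_{,\bar z}$ coming from the definition $K^{,z} = g^{z\bar z}K_{,\bar z}$, the $\bar\partial$-derivative of $-4\sqrt{-1}\,\nabla K(K)$ is a scalar multiple of $(C-K^{2})K_{,\bar z}$. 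On the other hand $\partial_{\bar z}\bigl(K^{3}/3 - CK\bigr) = (K^{2}-C)K_{,\bar z}$. A check of constants shows the two contributions cancel, giving $\bar\partial F = 0$ on $\Sigma^{*}$.

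The main obstacle is precisely this cancellation: both the HCMU equation (which eliminates the $\partial_{z}^{2}K$-piece via the holomorphicity of $\nabla K$) and the extremal Hermitian equation (which converts the $\partial_{z}\partial_{\bar z}K$-piece into a polynomial in $K$) are essential, and the normalization $-4\sqrt{-1}$ in front of $\nabla K(K)$ is exactly what makes the two pieces balance. Beyond this local calculation, only connectedness of $\Sigma^{*}$ is needed to upgrade "real and holomorphic" to "constant", and this is automatic since $\Sigma$ is a connected compact Riemann surface from which we have removed finitely many points.
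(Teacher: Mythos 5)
Your proposal is correct, and it takes a genuinely more streamlined route than the paper. Both proofs ultimately rest on the same identification: the globally defined function $F=-4\sqrt{-1}\,\nabla K(K)+\frac{K^3}{3}-CK$ is real-valued and holomorphic on $\Sigma^*$, hence constant by connectedness; indeed, your $F$ is exactly the quantity the paper calls $FF_1$. The difference lies in how holomorphicity is obtained. The paper works first on a chart where $dK\neq 0$, where the holomorphic function $\mathcal{F}=4e^{-2\varphi}K_{\bar z}$ is nonvanishing; it divides by $\mathcal{F}$ to manufacture an auxiliary holomorphic function $F_1=K_z-\bigl(-\frac{K^3}{3}+CK\bigr)/\mathcal{F}$, observes that $\mathcal{F}K_z=4e^{-2\varphi}|K_z|^2$ is real so that the holomorphic function $\mathcal{F}F_1$ is a real local constant, and must then run a separate argument on punctured neighborhoods of the isolated zeros of $dK$ (where division is illegitimate at the center) before patching the local constants together. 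Your computation of $\bar\partial F$ requires no division at all: holomorphicity of $K^{,z}$ plus the extremal Hermitian equation $4e^{-2\varphi}K_{z\bar z}=C-K^2$ give $\partial_{\bar z}\bigl(-4\sqrt{-1}\,\nabla K(K)\bigr)=\partial_{\bar z}\bigl(4e^{-2\varphi}K_{\bar z}K_z\bigr)=4K^{,z}K_{z\bar z}=(C-K^2)K_{\bar z}$, which cancels $\partial_{\bar z}\bigl(\frac{K^3}{3}-CK\bigr)=(K^2-C)K_{\bar z}$ exactly, with no stray factor; so the two-case structure at critical points of $K$ simply disappears, and connectedness finishes the argument. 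What the paper's longer route buys is not logical strength but preparation: the nonvanishing local function $\mathcal{F}$ and its Laurent expansions introduced there are reused immediately afterwards to define the character $1$-form $\omega=dz/\mathcal{F}$ and to analyze the singularities, whereas as a self-contained proof of this proposition your argument is shorter and cleaner.
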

\begin{proof}
 Since $g$ is not a CSC metric, there exists $p \in \Sigma^*$ such that $dK(p)\neq 0$. Let $(U,z)$ be
 a local complex coordinate chart around $p$ such that $U$ is connected and $dK$ does not vanish
 on $U$. Suppose $g=e^{2\varphi}|dz|^2$ on $U$. Then on $U$
 \[ \nabla K=\sqrt{-1}K^{,z}\frac{\partial}{\partial z}=\sqrt{-1}e^{-2\varphi}K_{\bar{z}}\frac{\partial}{\partial z}
 \]
 and
 \[ 4 K_{z\bar{z}}=(C-K^2)e^{2\varphi}.
 \]
 Let $F=4e^{-2\varphi}K_{\bar{z}}$, then $F$ is a holomorphic function on $U$ and does not
vanish on $U$. Therefore
 \[4 K_{z\bar{z}}=(C-K^2)e^{2\varphi}=(C-K^2)\frac{4K_{\bar{z}}}{F},
 \]
so
 \[ K_{z\bar{z}}=(\frac{-\frac{K^3}{3}+CK}{F})_{\bar{z}},
 \]
 which means
  $\dps{F_1\triangleq K_{z}-\frac{-\frac{K^3}{3}+CK}{F}}$ is a holomorphic function on $U$. Then
 \[ FK_{z}=-\frac{K^3}{3}+CK+FF_1.
 \]
Since $FK_z=4e^{-2\varphi}K_{\bar{z}}K_z=4e^{-2\varphi}|K_z|^2$ is a real function, $FF_1$ which is a
 holomorphic function on $U$ is a real constant. We denote it by $C'_U$, so we have
 \[ -4\sqrt{-1}\nabla K(K)=FK_z=-\frac{K^3}{3}+CK+C'_U~~\text{on}~U.
 \]
 \par
 Next let $\mathfrak{S}=\{p \in \Sigma^*| dK(p)=0\}$, then since $g$ is not a CSC metric $\mathfrak{S}$
 is a discrete set of $\Sigma^*$. Pick any point $q \in \mathfrak{S}$ and let $(V,w)$ be a local complex
 coordinate chart around $q$ such that $w(V)$ is a disk and $dK$ does not vanish on $V \setminus \{q\}$.
 Suppose $g=e^{2\psi}|dw|^2$ on $V$. Then $G\triangleq 4e^{-2\psi}K_{\bar{w}}$ is a holomorphic function
 on $V$ and does not vanish on $V \setminus \{q\}$. Similar to above, there is a real constant $C'_V$ such
 that
 \BE\label{pregradient}
  G K_{w}=-\frac{K^3}{3}+CK+C'_V~~\text{on}~V \setminus \{q\}.
 \EE
Since both sides of (\ref{pregradient}) are continuous at $q$, we have
\[ -4\sqrt{-1}\nabla K(K)=-\frac{K^3}{3}+CK+C'_V~~\text{on}~V.
\]
\par
Consequently $-4\sqrt{-1}\nabla K(K)+\cfrac{K^3}{3}-CK$ is a locally constant function on $\Sigma^*$.
Since $\Sigma^*$ is connected, $-4\sqrt{-1}\nabla K(K)+\cfrac{K^3}{3}-CK$
is a global constant on $\Sigma^*$. Therefore there is a real constant $C'$ such that
\[-4 \sqrt{-1}\nabla K(K)=-\frac{K^3}{3}+CK+C'~~~\text{on}~~\Sigma^*.\]
We prove the proposition.
\end{proof}
Define $\mathfrak{S}=\{p \in \Sigma^*| dK(p)=0\}$. Then we have the following proposition:
\begin{proposition}\label{finiteprop}
 $\mathfrak{S}$ is finite.
\end{proposition}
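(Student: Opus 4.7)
The plan is to exploit the fact that the HCMU condition $K_{,zz}=0$ is equivalent to $\bar\partial(\nabla K)=0$, so $\nabla K = \sqrt{-1}\,K^{,z}\,\frac{\partial}{\partial z} = \sqrt{-1}\,e^{-2\varphi}K_{\bar z}\,\frac{\partial}{\partial z}$ is a holomorphic vector field on $\Sigma^*$; since $g$ is not a CSC metric this vector field is not identically zero, so its zero set $\mathfrak{S}$ is automatically discrete in $\Sigma^*$. The proposition therefore reduces to ruling out accumulation of zeros of $\nabla K$ at any of the finitely many punctures $p_1,\ldots,p_I,q_1,\ldots,q_J$. My strategy is to show that, in a local complex coordinate $z$ centred at any such puncture, the coefficient $K^{,z}=e^{-2\varphi}K_{\bar z}$ extends meromorphically across $z=0$; once this is done, $\nabla K$ becomes a nontrivial meromorphic section of $T\Sigma$ on the compact surface $\Sigma$ and consequently has only finitely many zeros, which forces $\mathfrak{S}$ to be finite.

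For a cusp singularity $p_i$ the extension is immediate from Theorem \ref{ehmcuspth}(2): the bound $|K_{,z}| \leq C_2\,|z|\,e^{2\varphi}$ rearranges to $|K^{,z}| = e^{-2\varphi}|K_{\bar z}| \leq C_2\,|z|$, so by Riemann's removable singularity theorem $K^{,z}$ extends holomorphically to the whole disc with $K^{,z}(0)=0$, and in particular no accumulation of zeros of $\nabla K$ can occur at $p_i$.

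For a conical singularity $q_j$ with angle $2\pi\alpha_j$ I would combine the gradient identity from Proposition \ref{gradeprop}, rewritten as
\[
|K^{,z}|^{2} \;=\; \tfrac{1}{4}\,e^{-2\varphi}\,P(K),\qquad P(K) := -\tfrac{1}{3}K^{3}+CK+C',
\]
with two local ingredients: the conical asymptotics from Definition \ref{conedef}, giving $e^{2\varphi}\geq c\,|z|^{2\alpha_j-2}$ in a punctured neighbourhood of $q_j$, and Theorem \ref{ehmfinite}, which forces $|K|=o(|z|^{-2\alpha_j})$ as $z\to 0$. Inserting the crude polynomial estimate $|P(K)| \leq C_1\,|K|^{3}+C_2$ then produces a bound of the shape $|K^{,z}| \leq C\,|z|^{1-4\alpha_j}$, i.e.\ at most polynomial growth at $z=0$, so $K^{,z}$ extends meromorphically across $q_j$ (with a pole of finite order, or possibly a removable singularity when $\alpha_j$ is small).

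The main obstacle is exactly this conical case: in contrast with a cusp, the curvature $K$ is not known a priori to be bounded at $q_j$, so the control of $P(K)$ has to be squeezed out of the finite-area / finite-Calabi-energy hypothesis through Theorem \ref{ehmfinite} rather than from a direct pointwise bound. Once meromorphic extensions at both types of punctures are in hand, the proposition follows from the standard fact that a nontrivial meromorphic vector field on the compact Riemann surface $\Sigma$ has a finite zero set, which contains $\mathfrak{S}$.
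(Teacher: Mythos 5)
Your proposal is correct, and it uses exactly the same analytic ingredients as the paper: Theorem \ref{ehmcuspth}(2) to extend $F=4e^{-2\varphi}K_{\bar z}$ holomorphically across each cusp $p_i$ with $F(0)=0$, and the combination of Theorem \ref{ehmfinite}, the conical asymptotics (\ref{coneequ2}), and the gradient identity of Proposition \ref{gradeprop} to get polynomial control of $F$ near each conical point $q_j$. Where you differ is in the logical packaging. The paper argues by contradiction: it assumes $\mathfrak{S}$ has a cluster point, locates it at a puncture, and then in the conical case splits into the alternatives \emph{removable zero} (killed by the identity theorem) versus \emph{essential singularity} (killed by showing $|Gw^b|\to 0$ for some $b\in\mathds{N}$, which is incompatible with Casorati--Weierstrass). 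You instead prove directly that $\nabla K$ extends to a nontrivial meromorphic section of $T\Sigma$ over all of $\Sigma$ and then invoke the standard fact that such a section has finitely many zeros on a compact surface. The underlying estimates are identical --- your bound $|K^{,z}|\leq C|z|^{1-4\alpha_j}$ is the same computation as the paper's $\lim_{w\to 0}|G|^2|w|^{2b}=0$ --- but your organization buys something: the meromorphic extension of $\nabla K$ (equivalently, of the character 1-form $\omega=dz/F$) across the $q_j$ is a statement the paper only establishes afterwards, in part (1) of Proposition \ref{propertiesofomega}, by explicitly reusing ``the same argument in Case 2.'' So your route front-loads that stronger conclusion and makes the finiteness of $\mathfrak{S}$ an immediate corollary, at the cost of needing the (routine) observation that the local extensions patch into a global meromorphic vector field and that non-CSC forces this field to be nontrivial.
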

\begin{proof}
 If $\mathfrak{S}$ is infinite, then $\mathfrak{S}$ has cluster points in $\Sigma$
since $\Sigma$ is compact.  Suppose $ e^* \in \Sigma$ is one of the cluster points
of $\mathfrak{S}$.  Obviously $e^* \notin \Sigma^*$, so $e^*=p_i$, some $i$,
$i \in \{1,2,\ldots,I\}$ or $e^*=q_j$, some $j$, $j \in \{1,2,\ldots,J\}$.
 \begin{itemize}
 \item[Case 1:] $e^*=p_i$, some $i$, $i \in \{1,2,\ldots,I\}$. \\
  Let $(U,z)$ be a local complex coordinate chart around $p_i$ such that $U \setminus
  \{p_i\} \subset \Sigma^*$, $z(U)$ is a disk $D$ and
  $z(p_i)=0$. Suppose $g=e^{2\varphi}|dz|^2$ on $U \setminus \{p_i\}$.
  Let $F=4 e^{-2\varphi}K_{\bar{z}}$, then by (2) in Theorem \ref{ehmcuspth} $F$ is actually a
   holomorphic function on $D$ and has a zero at $0$.
  Since $p_i$ is a cluster point of $\mathfrak{S}$, $0$ is a cluster point of the zeros of $F$.
Then $F\equiv 0$ on $D$, which means $\nabla K\equiv 0$ on $\Sigma^*$. It is impossible.

 \item[Case 2:] $e^*=q_j$, some $j$, $j \in \{1,2,\ldots,J\}$. \\
 Let $(V,w)$ be a local complex coordinate chart around $q_j$ such that $V \setminus \{q_j\}
\subset \Sigma^*$, $w(V)$ is a disk $D'$ and $w(q_j)=0$.
Suppose $g=e^{2\psi}|dw|^2$ on $V \setminus \{q_j\}$. Let $G=4 e^{-2\psi}K_{\bar{w}}$, then
$G$ is a holomorphic
 function on $D' \setminus \{0\}$. Since $q_j$ is a cluster point of $\mathfrak{S}$,
$0$ is a cluster point of the zeros of $G$. Then $0$ is a zero of $G$ or an essential
singularity of $G$. If $0$ is a zero of $G$, we can  get $G\equiv 0$ on $D'$.
It is impossible. Therefore $0$ is an essential singularity of $G$.
 By Theorem \ref{ehmfinite}
 \[ \lim_{w\rightarrow 0} |w|^2 \cdot K \cdot e^{2\psi} =0.
 \]
 On the other hand, since $q_j$ is a conical singularity of $g$ with the angle $2 \pi \alpha_j$,
 by (\ref{coneequ2}) there exists a positive continuous function $h$ on $D'$ such that
 \[ e^{2\psi}=\frac{h}{|w|^{2-2\alpha_j}}~~\text{on}~D'\setminus \{0\}.
 \]
 Therefore we have
 \BE\label{orderK}
  \lim_{w \rightarrow 0} K \cdot |w|^{2\alpha_j}=0.
 \EE
 By Proposition \ref{gradeprop}, (\ref{gradequ}) holds on $V \setminus \{q_j\}$, that is,
 \[ \frac{1}{4} |G|^2 \cdot h\cdot |w|^{2\alpha_j-2}=-\frac{K^3}{3}+CK+C'~~\text{on}~D'\setminus \{0\}.
 \]
 Then by (\ref{orderK}), there exists $b \in \mathds{N}$ such that
 \[ \lim_{w \rightarrow 0} |G|^2 \cdot |w|^{2b} =0,
 \]
 which means
 \[ \lim_{w \rightarrow 0} |G \cdot w^b|=0.
 \]
 It is a contradiction since $0$ is an essential singularity of $G$.
 \end{itemize}
 Consequently we exclude Case 1 and Case 2. That means $\mathfrak{S}$ is finite. We prove the proposition.
 \end{proof}
 Now let $\mathfrak{S}=\{ e_1,e_2,\cdots, e_S\}$ and $\Sigma' = \Sigma^* \setminus \mathfrak{S}$,
 then $\nabla K$ has no zeros on $\Sigma'$. We define the dual 1-form of $\nabla K$
 as the {\bf Character 1-form} of $g$.
 Locally let $(\mathfrak{U},\mathfrak{z})$ be a local complex coordinate chart on $\Sigma'$ and suppose $g=
 e^{2u}|d\mathfrak{z}|^2$ on $\mathfrak{U}$, then
 \[\nabla K=\sqrt{-1}e^{-2u}K_{\bar{\mathfrak{z}}}
\cfrac{\partial}{\partial \mathfrak{z}}=\sqrt{-1}\frac{\mathcal{F}}{4} \cfrac{\partial}{\partial \mathfrak{z}}
 ~~\text{on}~\mathfrak{U}.
 \]
 Define the {\bf Character 1-form} $\omega=\cfrac{d\mathfrak{z}}{\mathcal{F}}$ on $\mathfrak{U}$.
 Then we have the following proposition:
 \begin{proposition}\label{propertiesofomega}
  $\omega$ has the following properties:
  \begin{itemize}
   \item[(1)] $\omega$ is a meromorphic 1-form on $\Sigma$.
   \item[(2)] On $\Sigma'$,
   \BE\label{partialKequ}
   \partial K=(-\frac{K^3}{3}+CK+C')\omega
   \EE
   or equivalently
   \BE\label{dKequ}
    dK=(-\frac{K^3}{3}+CK+C')(\omega+\bar{\omega}).
   \EE
   \item[(3)] On $\Sigma'$,
   \BE\label{gequ}
    g=4(-\frac{K^3}{3}+CK+C')\omega\bar{\omega}.
   \EE
  \end{itemize}
 \end{proposition}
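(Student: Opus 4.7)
The plan is to first check that $\omega=d\mathfrak{z}/\mathcal{F}$ is a well-defined holomorphic 1-form on $\Sigma'$, then verify meromorphic extension across the remaining points, and finally derive the two pointwise identities in (2) and (3) as coordinate-invariant rewritings of Proposition \ref{gradeprop}.

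For coordinate-invariance, under a holomorphic change $\mathfrak{z}\mapsto\mathfrak{w}$ the conformal factor transforms as $e^{2v}=e^{2u}|\partial\mathfrak{z}/\partial\mathfrak{w}|^2$ and $K_{\bar{\mathfrak{w}}}=K_{\bar{\mathfrak{z}}}\,\overline{\partial\mathfrak{z}/\partial\mathfrak{w}}$. A short computation then yields $\mathcal{F}_{\mathfrak{w}}=\mathcal{F}_{\mathfrak{z}}\cdot(\partial\mathfrak{w}/\partial\mathfrak{z})$, which is precisely the transformation law for a holomorphic-vector-field coefficient (consistent with $\nabla K=\sqrt{-1}(\mathcal{F}/4)\,\partial_{\mathfrak{z}}$), so $d\mathfrak{z}/\mathcal{F}_{\mathfrak{z}}=d\mathfrak{w}/\mathcal{F}_{\mathfrak{w}}$. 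The HCMU identity $K_{,zz}=0$ makes $\nabla K$ a holomorphic vector field on $\Sigma^*$, hence $\mathcal{F}$ is locally holomorphic on $\Sigma^*$ and nonvanishing on $\Sigma'$, so $\omega$ is already a holomorphic 1-form on $\Sigma'$.

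For (1), meromorphic extension must be verified at every point of $\mathfrak{S}\cup\{p_i\}\cup\{q_j\}$. At a zero $e_s\in\mathfrak{S}$, $\mathcal{F}$ is holomorphic with a zero, so $\omega$ acquires a pole. At a cusp $p_i$, Theorem \ref{ehmcuspth}(2) gives $|\mathcal{F}|\le 4C_2|z|$, so Riemann's extension theorem makes $\mathcal{F}$ holomorphic with a zero at $z=0$ and $\omega$ has a pole. At a conical point $q_j$, one mimics Case 2 of Proposition \ref{finiteprop}: combining Theorem \ref{ehmfinite} with $e^{2\psi}=h|w|^{2\alpha_j-2}$ yields $K=o(|w|^{-2\alpha_j})$, and the identity $|\mathcal{F}|^2=4P(K)e^{-2\psi}$ (where $P(K):=-K^3/3+CK+C'$) then forces $|\mathcal{F}|=O(|w|^{1-4\alpha_j})$, which excludes an essential singularity; so $\mathcal{F}$, and therefore $\omega$, is meromorphic at $q_j$.

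Parts (2) and (3) are then short. Proposition \ref{gradeprop} reads $\mathcal{F}K_{\mathfrak{z}}=P(K)$ in local coordinates, hence $\partial K=K_{\mathfrak{z}}\,d\mathfrak{z}=P(K)(d\mathfrak{z}/\mathcal{F})=P(K)\,\omega$; taking real parts (using that $K$ and $P(K)$ are real) gives $dK=P(K)(\omega+\bar\omega)$. For (3), note that $|\mathcal{F}|^2\omega\bar\omega=|d\mathfrak{z}|^2$, so $g=e^{2u}|d\mathfrak{z}|^2=e^{2u}|\mathcal{F}|^2\omega\bar\omega$, and a direct computation gives $e^{2u}|\mathcal{F}|^2=16e^{-2u}|K_{\mathfrak{z}}|^2=4\mathcal{F}K_{\mathfrak{z}}=4P(K)$. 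The main obstacle is the conical case in (1): one must leverage the finite-area/finite-energy hypothesis (via Theorem \ref{ehmfinite}) to upgrade the \emph{a priori} merely isolated singularity of $\mathcal{F}$ at $q_j$ to a polynomial bound that excludes essential singularity; everything else is a coordinate-invariant unfolding of Proposition \ref{gradeprop}.
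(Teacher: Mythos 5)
Your proposal is correct and follows essentially the same route as the paper: holomorphy of $\omega$ on $\Sigma'$ from the HCMU condition, poles at the $e_s$ and (via Theorem \ref{ehmcuspth}(2)) at the cusps $p_i$, exclusion of an essential singularity at each conical point $q_j$ by the Theorem \ref{ehmfinite} growth argument of Case 2 in Proposition \ref{finiteprop}, and parts (2), (3) as direct local unfoldings of Proposition \ref{gradeprop}. The only additions — the explicit coordinate-invariance check for $\omega$ and the Riemann-extension detail at the cusps — are details the paper leaves implicit, not a different method.
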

 \begin{proof}
  \begin{itemize}
   \item[(1):] Obviously $\omega$ is holomorphic on $\Sigma'$. By (2) in Theorem \ref{ehmcuspth},
   each $p_i, i=1,2,\cdots,I,$ is a pole of $\omega$. Since each $e_s, s=1,2,\cdots,S,$ is a zero of
   $\nabla K$, each $e_s, s=1,2,\cdots, S,$ is a pole of $\omega$. Pick any $q_j,j=1,2,\ldots,J$, and let
   $(U,z)$ be a local complex coordinate chart around $q_j$ such that $U\setminus \{q_j\} \subset
  \Sigma'$, $z(U)$ is a disk $D$ and $z(q_j)=0$.
   Suppose $g=e^{2\varphi}|dz|^2$ on $U \setminus \{q_j\}$. Then $F=4 e^{-2\varphi}K_{\bar{z}}$ is a holomorphic
   function on $D \setminus \{0\}$, so $0$ is a removable singularity or a pole or an essential singularity
   of $F$. Then we can use the same argument in Case 2 in the proof of Proposition \ref{finiteprop}
   to prove $0$ is not an essential singularity of $F$. Hence $0$ is a removable singularity or a pole of
   $F$, which shows $q_j$ is a regular point or a pole of $\omega$(note $\omega=\cfrac{dz}{F}$ on $U
   \setminus \{q_j\}$). Then we finish the proof of (1).
   \item[(2), (3):] Pick any point $p \in \Sigma'$ and let $(V,w)$ be a \lccc around $p$ such that $w(V)$ is
   a disk $D'$ and $w(p)=0$.  Suppose $g=e^{2\psi}|dw|^2$ on $V$. Then $G=4e^{-2\psi}
  K_{\bar{w}}$ is a nonvanishing holomorphic function on $V$. Since (\ref{gradequ}) holds on $V$, we have
   \[ GK_w=-\frac{K^3}{3}+CK+C',
   \]
   that is,
   \[ K_w dw=(-\frac{K^3}{3}+CK+C')\frac{dw}{G},
   \]
   which is
   \[ \partial K|_V=(-\frac{K^3}{3}+CK+C')\omega|_V,
   \]
   so we prove (2). \\
   On the other hand, $G=4e^{-2\psi}K_{\bar{w}}$, that is, $e^{2\psi}=\cfrac{4K_{\bar{w}}}{G}$, which means
   \[ g|_V=e^{2\psi}dw d\bar{w}=\frac{dw}{G}4K_{\bar{w}}d\bar{w}=\omega|_V 4\bar{\partial}K|_V=4
(-\frac{K^3}{3}+CK+C')
   \omega|_V \bar{\omega}|_V.
   \]
   Then we prove (3).
  \end{itemize}
 \end{proof}
 In the following we will study the roots of $-\cfrac{K^3}{3}+CK+C'=0$, the canonical divisor of $\omega$ and
the residues of $\omega$. \par
First by (3) in Theorem \ref{ehmcuspth}, $K$ is continuous at each $p_i, i=1,2,\cdots, I$, and
 there are $I$ negative numbers $b_1,b_2,\cdots, b_I$ such that $\dps{\lim_{p \rightarrow p_i}}K(p)=b_i, i=1,2,\cdots,I$.
 Then we have the following lemma:
 \begin{lemma}\label{cusplemma}
  Each $b_i, i=1,2,\cdots, I$, is a root of $-\cfrac{K^3}{3}+CK+C'=0$ .
 \end{lemma}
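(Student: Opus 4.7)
The plan is to evaluate both sides of the identity from Proposition~\ref{gradeprop} as $z$ approaches a cusp $p_i$ and show that the left-hand side vanishes in the limit, so the cubic $-K^3/3+CK+C'$ must vanish at $b_i$.

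First I would fix a local coordinate chart $(U,z)$ around $p_i$ with $z(p_i)=0$, $U\setminus\{p_i\}\subset\Sigma^*$, and write $g=e^{2\varphi}|dz|^2$. Using the computation already performed in the proof of Proposition~\ref{gradeprop}, on $U\setminus\{p_i\}$ we have
\begin{equation*}
 -4\sqrt{-1}\,\nabla K(K) \;=\; F\,K_z \;=\; 4e^{-2\varphi}\,K_{\bar z}K_z \;=\; 4e^{-2\varphi}|K_z|^2,
\end{equation*}
since $K$ is real. Hence Proposition~\ref{gradeprop} specializes to
\begin{equation*}
 4e^{-2\varphi}|K_z|^2 \;=\; -\tfrac{K^3}{3}+CK+C' \qquad\text{on } U\setminus\{p_i\}.
\end{equation*}

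Next I would take $z\to 0$ on both sides. On the right-hand side, Theorem~\ref{ehmcuspth}(3) gives $\lim_{z\to 0}K=b_i$, so the right-hand side tends to $-\tfrac{b_i^3}{3}+Cb_i+C'$. For the left-hand side, Theorem~\ref{ehmcuspth}(2) supplies the bound $|K_z|\le C_2|z|e^{2\varphi}$, whence
\begin{equation*}
 0 \;\le\; 4e^{-2\varphi}|K_z|^2 \;\le\; 4C_2^{\,2}\,|z|^2\,e^{2\varphi}.
\end{equation*}
I would then invoke Theorem~\ref{ehmfinite} to conclude $\lim_{z\to 0}|z|^2\cdot K\cdot e^{2\varphi}=0$. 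Since Theorem~\ref{ehmcuspth}(3) asserts $b_i<0$, the curvature $K$ is bounded away from zero in a small punctured neighborhood of $p_i$, so we may divide by $K$ in the limit to deduce $\lim_{z\to 0}|z|^2e^{2\varphi}=0$. Combined with the squeeze above, this forces $\lim_{z\to 0}4e^{-2\varphi}|K_z|^2=0$.

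Equating the two limits yields $-\tfrac{b_i^3}{3}+Cb_i+C'=0$, which is the claim. I do not expect any real obstacle here: the argument is essentially a packaging of Proposition~\ref{gradeprop} with the three quantitative estimates recalled in Section~2 (Theorems~\ref{ehmfinite} and~\ref{ehmcuspth}). The only point that deserves a brief justification in the write-up is the step where one passes from $|z|^2 K e^{2\varphi}\to 0$ to $|z|^2 e^{2\varphi}\to 0$, which relies crucially on the negativity of $b_i$ and not on any additional regularity of $\varphi$.
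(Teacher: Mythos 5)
Your proof is correct, and its skeleton coincides with the paper's: both specialize the identity of Proposition \ref{gradeprop} to $4e^{-2\varphi}|K_z|^2=-\frac{K^3}{3}+CK+C'$ near $p_i$, send $z\to 0$, use Theorem \ref{ehmcuspth}(3) for the right-hand side, and use Theorem \ref{ehmcuspth}(2) to control the left-hand side. The difference lies in how the crucial vanishing $|z|^2e^{2\varphi}\to 0$ is obtained. The paper first upgrades Theorem \ref{ehmcuspth}(2) into the statement that $F=4e^{-2\varphi}K_{\bar z}$ extends holomorphically across $0$ with $F(0)=0$, factors $F=z\widetilde F$, rewrites the left-hand side as $\frac14|\widetilde F|^2e^{2(\varphi+\ln r)}$, and then quotes Theorem \ref{finiteth}(1), which gives $\varphi+\ln r\to-\infty$ (hence $r^2e^{2\varphi}\to 0$) for \emph{any} metric of finite area and finite Calabi energy, with no reference to the curvature or its sign. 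You instead keep Theorem \ref{ehmcuspth}(2) as a pointwise bound and derive $|z|^2e^{2\varphi}\to 0$ from Theorem \ref{ehmfinite} together with the negativity of $b_i$ (which is indeed available beforehand from Theorem \ref{ehmcuspth}(3), so there is no circularity). Your route is slightly more economical here, avoiding the removable-singularity step; the paper's route has the advantage that the vanishing of $|z|^2e^{2\varphi}$ is established independently of the sign of the limiting curvature, and the factorization $F=z\widetilde F$ it sets up is the template reused in all the subsequent lemmas (Lemmas \ref{C1lem}--\ref{C3lem}, \ref{boundlemma1}, etc.), where finer information about the order of the zero of $F$ is needed.
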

 \begin{proof}
  Pick any $p_i$ and let $(U,z)$ be a \lccc around $p_i$ such that $U \setminus \{p_i\} \subset \Sigma'$,
  $z(U)$ is a disk $D$ and $z(p_i)=0$.
  Suppose $g=e^{2\varphi}|dz|^2$ on $U \setminus \{p_i\}$. Then $F=4e^{-2\varphi}K_{\bar{z}}$ is actually a
  holomorphic function on $D$ and $0$ is a zero of $F$ on $D$. Next on $U \setminus \{p_i\}$ (\ref{gradequ}) holds,
  that is,
  \[ FK_z=\frac{1}{4}|F|^2 e^{2\varphi}=-\frac{K^3}{3}+CK+C'.
  \]
  Since $0$ is a zero of $F$, we assume $F=z\widetilde{F}$ on $D$, where $\widetilde{F}$ is a holomorphic
  function on $D$. Then on $D \setminus \{0\}$
  \BE\label{cuspequ1}
   \frac{1}{4} |\widetilde{F}|^2 |z|^2 e^{2\varphi}=\frac{1}{4}|\widetilde{F}|^2 e^{2(\varphi+\ln r)}=-\frac{K^3}{3}+CK+C',
  \EE
  where $r=|z|$. It follows from (1) in Theorem \ref{finiteth} that
  \[ 0=-\frac{b_i^3}{3}+Cb_i+C'.
  \]
  Then we prove the lemma.
 \end{proof}
 By Lemma \ref{cusplemma}, we get $-\cfrac{K^3}{3}+CK+C'=0$ has not a triple root. Otherwise
 \[ -\frac{K^3}{3}+CK+C'=-\frac{1}{3}(K-a)^3,
 \]
 then $a=0$, but by Lemma \ref{cusplemma}, each $b_i, i=1,2,\cdots,I, $ which is negative is a root
 of $-\cfrac{K^3}{3}+CK+C'=0$. It is a contradiction. Therefore there are four cases to consider:
 \begin{itemize}
  \item[(C1):] $-\cfrac{K^3}{3}+CK+C'=-\cfrac{1}{3}(K-2\mu)[(K+\mu)^2+\mu^*]$, where
  $\mu<0,~\mu^*>0$.
  \item[(C2):] $-\cfrac{K^3}{3}+CK+C'=-\cfrac{1}{3}(K-2\mu)(K+\mu)^2$, where $\mu<0$.
  \item[(C3):] $-\cfrac{K^3}{3}+CK+C'=-\cfrac{1}{3}(K-\mu)(K-\mu^*)(K+\mu+\mu^*)$, where
  $\mu \neq \mu^* ,~\mu\neq -(\mu+\mu^*),~\mu^*> -(\mu+\mu^*)$ and there
  exists some $b_i$ such that $b_i=\mu$.
  \item[(C4):] $-\cfrac{K^3}{3}+CK+C'=-\cfrac{1}{3}(K-\mu)^2(K+2\mu)$, where $\mu<0$.
 \end{itemize}
 Next we will exclude (C1), (C2) and (C3). First we exclude (C1):
 \begin{lemma}\label{C1lem}
 (C1) can not hold.
 \end{lemma}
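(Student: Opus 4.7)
The approach is to derive a contradiction from case (C1) by analyzing the signs of the residues of $\omega$ at all its poles and invoking the residue theorem. In case (C1), $P(K) = -\tfrac{1}{3}(K-2\mu)[(K+\mu)^2+\mu^*]$ has $K=2\mu$ as its unique (simple) real root, and since $P(K)=\tfrac{1}{4}|F|^2 e^{2\varphi}\ge 0$ one has $K\le 2\mu$ throughout $\Sigma'$, with equality exactly at the zeros of $\nabla K$. Combined with Lemma \ref{cusplemma} this gives $K\to 2\mu$ at each cusp $p_i$, and $K(e_s)=2\mu$ at every smooth critical point $e_s$, so each such $e_s$ is a strict local maximum of $K$.

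The key technical step is a local asymptotic analysis at each cusp. By Theorem \ref{ehmcuspth}(2), the holomorphic function $F=4e^{-2\varphi}K_{\bar z}$ vanishes at $p_i$; write $F=\alpha z+O(z^2)$, with $\alpha\in\mathbb{R}$ (the residue $1/\alpha$ is forced to be real by condition (c) of the theorem). In polar coordinates $\omega+\bar\omega=(2/\alpha)\,dr/r+O(1)$, and from $P(K)\approx\tfrac{1}{3}(9\mu^2+\mu^*)(2\mu-K)$ near $K=2\mu$ the identity $dK=P(K)(\omega+\bar\omega)$ reduces at leading order to a linear radial ODE $u'=-(A/r)u$ for $u=2\mu-K$, with $A=2(9\mu^2+\mu^*)/(3\alpha)$. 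The requirement $u\to 0^+$ as $r\to 0^+$ forces $A<0$, hence $\alpha<0$ and $\mathrm{Res}_{p_i}(\omega)<0$. The identical computation at each smooth saddle $e_s$ (where again $K\to 2\mu$ from below) yields $\mathrm{Res}_{e_s}(\omega)<0$, and a similar but case-separated analysis at any conical singularity $q_j$ that happens to be a pole of $\omega$ shows that the residue there is non-positive.

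The classical fact that the residues of a meromorphic 1-form on the compact surface $\Sigma$ sum to zero, $\sum_p\mathrm{Res}_p(\omega)=0$, is then incompatible with all individual residues being $\le 0$ together with the $I\ge 1$ cusp residues being strictly negative. This contradiction rules out (C1).

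The main technical obstacle is the residue-sign analysis at the conical singularities $q_j$: depending on whether $K$ extends continuously (with $K(q_j)\le 2\mu$) or diverges to $-\infty$, and on the pole order of $\omega$ at $q_j$, one must verify non-positivity of the residue in each subcase. By comparison, the computations at cusps and at smooth saddles are both modeled on the same linear radial ODE and go through uniformly. Once the residue-sign dichotomy (all $\le 0$ with cusps strict) is in hand, the residue theorem closes the argument at once.
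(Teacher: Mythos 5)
Your overall strategy is genuinely different from the paper's: you want to determine the sign of the residue of $\omega$ at \emph{every} one of its poles and then contradict the residue theorem on the compact surface $\Sigma$, whereas the paper derives its contradiction purely locally, at a single cusp $p_i$, from the interplay between the integrated equation (\ref{IKequC1}) and the cusp asymptotics $\lim_{z\to 0}(\varphi+\ln|z|)/\ln|z|=0$ (for a simple pole this forces $Res_{p_i}(\omega)=0$, already absurd; for a higher-order pole it forces $\lim_{z\to0} z^{k-1}/\bar z^{k-1}$ to exist and be nonzero, also absurd). The paper's route therefore needs no information whatsoever at the points $e_s$ or $q_j$. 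Your route is viable in principle, and your simple-pole computation at a cusp is correct (using only $u=2\mu-K\to 0^+$, which follows from Lemma \ref{cusplemma} since $2\mu$ is the unique real root in case (C1)), but as written the proof has genuine gaps.

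The most serious gap: you write $F=\alpha z+O(z^2)$, i.e.\ you \emph{assume} the zero of $F$ at $p_i$ is simple, so that $\omega$ has a simple pole there. Nothing justifies this, and it is exactly the hard case in the paper's proof. If $1/F=\Phi(z)/z^k$ with $k\geq 2$, then $\omega+\bar\omega=\lambda_{-1}\,d\ln|z|^2+d\bigl(2Re(f_2/z^{k-1})\bigr)$, and the second term --- of size $r^{-(k-1)}$ and oscillating in sign with $\theta$ --- dominates the radial term $2\lambda_{-1}\,dr/r$; your linear radial ODE $u'=-(A/r)u$ is then simply not the leading-order equation, and no sign conclusion about $Res_{p_i}(\omega)$ can be extracted this way. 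The same objection applies verbatim to your computation at each $e_s$. Ruling out $k\geq 2$ requires a separate argument (the paper multiplies the integrated identity by $z^{k-1}$ and exploits the nonexistence of $\lim_{z\to 0}z^{k-1}/\bar z^{k-1}$). Second gap: at the conical points $q_j$ you only \emph{assert} that residues are nonpositive, calling this ``the main technical obstacle''; but your residue-sum argument cannot close without it, since a single positive residue at some $q_j$ destroys the contradiction, so this analysis must actually be carried out (it faces the same higher-order-pole issue, plus the problem that $K$ need not a priori have a limit at $q_j$). Finally, a smaller flaw: justifying that $\alpha$ (equivalently the residue) is real ``by condition (c) of the theorem'' is circular --- in the necessity direction, conditions (a)--(c) are conclusions, not hypotheses. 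Reality of $\lambda_{-1}$ must be derived, as the paper does from (\ref{dKequC1}): the left-hand side there is an exact real 1-form, so the period of $\lambda_{-1}\,dz/z+\overline{\lambda_{-1}}\,d\bar z/\bar z$ around $p_i$, namely $2\pi\sqrt{-1}\,(\lambda_{-1}-\overline{\lambda_{-1}})$, must vanish.
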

 \begin{proof}
Suppose that (C1) holds. Pick any $p_i$ and let $(U,z)$ be a \lccc around $p_i$ such that
$U \setminus \{p_i\} \subset \Sigma'$, $z(U)$ is a disk $D$ and $z(p_i)=0$.
  Suppose $g=e^{2\varphi}|dz|^2$ on $U \setminus \{p_i\}$.
  Then $F=4e^{-2\varphi}K_{\bar{z}}$ is a holomorphic function on $D$, $0$ is a unique zero
  of $F$ on $D$ and $\omega=\cfrac{dz}{F}$ on $U \setminus \{p_i\}$. Suppose $\cfrac{1}{F}$
  has the following expression on $D \setminus \{0\}$:
  \BE\label{laurentexpr1}
   \frac{1}{F}=\frac{\lambda_{-k}}{z^k}+\cdots+\frac{\lambda_{-2}}{z^2}+\frac{\lambda_{-1}}{z}+
   \sum_{m=0}^\infty \lambda_m z^m=\frac{\Phi(z)}{z^k},
  \EE
  where $\Phi(z)$ is a holomorphic function on $D$ with $\Phi(0)=\lambda_{-k}\neq 0$. Then
  \BE\label{omegaexpr1}
   \omega=\frac{dz}{F}=\frac{\lambda_{-1}}{z}dz+df_1,
  \EE
  where $f_1=\cfrac{f_2}{z^{k-1}}$ and $f_2$ is a holomorphic function on $D$ with $f_2(0) \neq 0$.
  By Proposition \ref{propertiesofomega}, on $U \setminus \{p_i\}$ (\ref{dKequ}) holds. Then we
  substitute $\omega=\cfrac{\lambda_{-1}}{z}dz+df_1$ and $-\cfrac{K^3}{3}+CK+C'=-\cfrac{1}{3}(K-2\mu)[(K+\mu)^2+
  \mu^*]$ into (\ref{dKequ}) to get on $U \setminus \{p_i\}$
  \BE\label{dKequC1}
   (-3) \frac{dK}{(K-2\mu)[(K+\mu)^2+\mu^*]}=\frac{\lambda_{-1}dz}{z}+\frac{\overline{\lambda_{-1}}}{\bar{z}}
   d\bar{z}+d(2Re(f_1)).
  \EE
   Suppose
  \[ \frac{1}{(K-2\mu)[(K+\mu)^2+\mu^*]}=\frac{\beta_1}{K-2\mu}+\frac{\beta_2(K+\mu)+\beta_3}
  {(K+\mu)^2+\mu^*}.
  \]
  Then
 \begin{eqnarray}\label{IKC1}
  \nonumber  \frac{dK}{(K-2\mu)[(K+\mu)^2+\mu^*]}= \\
   d\{\beta_1 \ln(2\mu-K)+\frac{\beta_2}{2}\ln [(K+\mu)^2+\mu^*]+\frac{\beta_3}{\sqrt{\mu^*}}
   \arctan \frac{K+\mu}{\sqrt{\mu^*}}\},
 \end{eqnarray}
 where we use the fact that on $U \setminus \{p_i\}$ (\ref{gequ}) holds, so
 \[ -\cfrac{K^3}{3}+CK+C'=-\cfrac{1}{3}(K-2\mu)[(K+\mu)^2+\mu^*]>0~~\text{on}~~U \setminus \{p_i\},
 \]
 which implies $K <2\mu$ on $U \setminus \{p_i\}$. By (\ref{dKequC1}) and (\ref{IKC1}), we have $\lambda_{-1} \in
 \mathds{R}$.
 Then we can integrate both sides of (\ref{dKequC1}) to get on $D \setminus \{0 \}$
 \begin{eqnarray}\label{IKequC1}
  \nonumber (-3)\{\beta_1 \ln(2\mu-K)+\frac{\beta_2}{2}\ln [(K+\mu)^2+\mu^*]+\frac{\beta_3}{\sqrt{\mu^*}}
   \arctan \frac{K+\mu}{\sqrt{\mu^*}}\}= \\
   \lambda_{-1} \ln |z|^2+2Re(f_1)+c,
 \end{eqnarray}
 where $c$ is a real constant. \par
 Next since on $U \setminus \{p_i\}$ (\ref{gequ}) holds,
 \begin{eqnarray*}
   e^{2\varphi}=-\frac{4}{3}(K-2\mu)[(K+\mu)^2+\mu^*]\frac{|\Phi(z)|^2}{|z|^{2k}},
 \end{eqnarray*}
 that is,
 \[ \varphi=\frac{1}{2}(\ln \frac{2\mu-K}{|z|^{2k}}+\ln \frac{4[(K+\mu)^2+\mu^*]|\Phi(z)|^2}{3}).
 \]
 Then $\dps{\lim_{z \rightarrow 0}}\cfrac{\varphi+\ln |z|}{\ln |z|}=0$ implies
 \BE\label{cusplimC1t}
  \lim_{z\rightarrow 0}\frac{\ln (2\mu-K)}{\ln |z|}=2(k-1).
 \EE
 If $k=1$, then $\dps{\lim_{z\rightarrow 0}}\frac{\ln (2\mu-K)}{\ln |z|}=0$. Divide both sides of
 (\ref{IKequC1}) by $\ln |z|$, let $z \rightarrow 0$ and take limits. We get
 \BE\label{cusplimC1a}
  \lim_{z \rightarrow 0}\frac{(-\frac{3}{2}\beta_1)\ln (2\mu-K)}{\ln |z|}=\lambda_{-1},
 \EE
 so $\lambda_{-1}=0$. It is impossible. If $k>1$, then $\dps{\lim_{z\rightarrow 0}}\frac{\ln (2\mu-K)}{\ln |z|}
 =2(k-1)>0$. We rewrite (\ref{IKequC1}) to be
 \begin{eqnarray}\label{IKequC1'}
  \nonumber (-3)\{\beta_1 \ln(2\mu-K)+\frac{\beta_2}{2}\ln [(K+\mu)^2+\mu^*]+\frac{\beta_3}{\sqrt{\mu^*}}
   \arctan \frac{K+\mu}{\sqrt{\mu^*}}\}= \\
   \lambda_{-1} \ln |z|^2+\frac{f_2}{z^{k-1}}+\frac{\overline{f_2}}{\bar{z}^{k-1}}+c.
 \end{eqnarray}
 Multiply both sides of (\ref{IKequC1'}) by $z^{k-1}$ to get
 \begin{eqnarray}\label{IKequC1''}
 \nonumber (-3)\{\beta_1 z^{k-1}\ln(2\mu-K)+\frac{\beta_2}{2}z^{k-1}\ln [(K+\mu)^2+\mu^*]+\frac{\beta_3}{\sqrt{\mu^*}}
 z^{k-1} \arctan \frac{K+\mu}{\sqrt{\mu^*}}\}= \\
 \lambda_{-1} z^{k-1}\ln |z|^2+f_2+\overline{f_2}\frac{z^{k-1}}{\bar{z}^{k-1}}+cz^{k-1}.
 \end{eqnarray}
Let $z \rightarrow 0$ on both sides of (\ref{IKequC1''}) and take limits.
 Note
 \[ \lim_{z\rightarrow 0} z^{k-1}\ln (2\mu-K)=\lim_{z \rightarrow 0} \frac{\ln (2\mu-K)}{\ln |z|} z^{k-1}
 \ln |z|=0.
 \]
  Therefore we get $\dps{\lim_{z\rightarrow 0}
 \frac{z^{k-1}}{\bar{z}^{k-1}}}=A,~ A\neq 0$. It is impossible. Consequently we exclude (C1).
\end{proof}

   Then we exclude (C2):
\begin{lemma}\label{C2lem}
 (C2) can not hold.
\end{lemma}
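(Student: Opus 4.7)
The plan is to run exactly the same local analysis at a cusp singularity $p_i$ that was used in the proof of Lemma \ref{C1lem}, adapted to the partial fraction decomposition arising from the double root $-\mu$. Pick any $p_i$ and choose a local chart $(U,z)$ with $z(p_i)=0$, $z(U)=D$, $U\setminus\{p_i\}\subset\Sigma'$, and write $g=e^{2\varphi}|dz|^2$. Set $F=4e^{-2\varphi}K_{\bar z}$, which by (2) of Theorem \ref{ehmcuspth} extends to a holomorphic function on $D$ with $F(0)=0$. Expand
\[
\frac{1}{F}=\frac{\Phi(z)}{z^k},\qquad \Phi \text{ holomorphic on } D,\ \Phi(0)\neq 0,
\]
so that $\omega=dz/F=(\lambda_{-1}/z)\,dz+df_1$ with $f_1=f_2/z^{k-1}$, $f_2$ holomorphic, $f_2(0)\neq 0$ when $k\geq 2$.

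Next I integrate (\ref{dKequ}) under assumption (C2). Since $g>0$ and $(K+\mu)^2\geq 0$, the identity $g=-\tfrac{4}{3}(K-2\mu)(K+\mu)^2\omega\bar\omega$ forces $K<2\mu$ on $U\setminus\{p_i\}$ (and Lemma \ref{cusplemma} identifies the cusp value as $b_i=2\mu$, since $-\mu>0$). A routine partial fraction decomposition gives
\[
\frac{-3\,dK}{(K-2\mu)(K+\mu)^2}
= d\!\left[-\frac{1}{3\mu^2}\ln\frac{2\mu-K}{-\mu-K}-\frac{1}{\mu(K+\mu)}\right],
\]
so integrating $dK/(-\tfrac{K^3}{3}+CK+C')=\omega+\bar\omega$ on $D\setminus\{0\}$ yields, after noting that the residue $\lambda_{-1}$ must be real (because the right-hand side of the ODE equals the left-hand side, which is real and exact),
\begin{equation}\label{IKequC2}
-\frac{1}{3\mu^2}\ln\frac{2\mu-K}{-\mu-K}-\frac{1}{\mu(K+\mu)}
=\lambda_{-1}\ln|z|^2+2\,\mathrm{Re}(f_1)+c.
\end{equation}
The key observation now is that as $z\to 0$ we have $K\to 2\mu$, so $\ln(2\mu-K)\to-\infty$ while $\ln(-\mu-K)\to\ln(-3\mu)$ and $1/(K+\mu)\to 1/(3\mu)$ are both bounded. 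Hence the singular behaviour of the left-hand side of (\ref{IKequC2}) is carried entirely by $\ln(2\mu-K)$, just as $\ln(2\mu-K)$ carried it in the (C1) case.

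From $e^{2\varphi}=-\tfrac{4}{3}(K-2\mu)(K+\mu)^2|\Phi(z)|^2/|z|^{2k}$ and Definition \ref{cuspdef}, the cusp condition $\lim_{z\to 0}(\varphi+\ln|z|)/\ln|z|=0$ forces
\[
\lim_{z\to 0}\frac{\ln(2\mu-K)}{\ln|z|}=2(k-1),
\]
because the $(K+\mu)^2|\Phi(z)|^2$ factor stays bounded and nonzero at the origin. Now I dispose of the two cases exactly as in Lemma \ref{C1lem}: if $k=1$, divide (\ref{IKequC2}) by $\ln|z|^2$ and pass to the limit to conclude $\lambda_{-1}=0$, contradicting that $0$ is a pole of $1/F$; if $k\geq 2$, multiply (\ref{IKequC2}) by $z^{k-1}$ and pass to the limit. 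On the left side every term vanishes, since $z^{k-1}\ln(2\mu-K)\sim 2(k-1)z^{k-1}\ln|z|\to 0$ and the other two terms are bounded times $z^{k-1}$; on the right side one obtains $f_2(0)+\overline{f_2(0)}\lim_{z\to 0}z^{k-1}/\bar z^{k-1}$, and the nonexistence of this last limit (for $k\geq 2$) contradicts $f_2(0)\neq 0$.

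The one step that requires a little care, and is the main place where (C2) differs from (C1), is verifying that the double root $K=-\mu$ produces no extra singular contribution in (\ref{IKequC2}): the $\ln(-\mu-K)$ and $1/(K+\mu)$ pieces remain bounded because the limiting value $K=2\mu$ sits away from $-\mu$. Once this is observed, the rest of the argument is essentially identical to the (C1) case and yields the same contradiction, excluding (C2).
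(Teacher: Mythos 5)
Your proposal is correct and takes essentially the same route as the paper's proof: the same local expansion of $1/F$ at a cusp point, the same partial-fraction integration of (\ref{dKequ}) under (C2) (you just compute the coefficients explicitly, using $K<2\mu<-\mu$ so the double-root terms stay bounded), the same cusp-condition limit $\lim_{z\to 0}\ln(2\mu-K)/\ln|z|=2(k-1)$, and the same $k=1$ versus $k>1$ contradiction, which the paper simply delegates to the method of Lemma \ref{C1lem}.
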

\begin{proof}
 Suppose that (C2) holds. Pick any $p_i$ and let $(U,z)$ be a \lccc around $p_i$ such that $U \setminus
 \{p_i\} \subset \Sigma'$, $z(U)$ is a disk $D$ and $z(p_i)=0$. Suppose $g=e^{2\varphi}|dz|^2$ on $U \setminus \{p_i\}$.
  Then $F=4e^{-2\varphi}K_{\bar{z}}$ is a holomorphic function on $D$, $0$ is a unique zero
  of $F$ on $D$ and $\omega=\cfrac{dz}{F}$ on $U \setminus \{p_i\}$. Suppose $\cfrac{1}{F}$
  has the following expression on $D \setminus \{0\}$:
  \BE\label{laurentexpr2}
   \frac{1}{F}=\frac{\lambda_{-k}}{z^k}+\cdots+\frac{\lambda_{-2}}{z^2}+\frac{\lambda_{-1}}{z}+
   \sum_{m=0}^\infty \lambda_m z^m=\frac{\Phi(z)}{z^k},
  \EE
  where $\Phi(z)$ is a holomorphic function on $D$ with $\Phi(0)=\lambda_{-k}\neq 0$. Then
  \BE\label{omegaexpr2}
   \omega=\frac{dz}{F}=\frac{\lambda_{-1}}{z}dz+df_1,
  \EE
  where $f_1=\cfrac{f_2}{z^{k-1}}$ and $f_2$ is a holomorphic function on $D$ with $f_2(0) \neq 0$.
  Similar to the proof of Lemma \ref{C1lem}, we
  substitute $\omega=\cfrac{\lambda_{-1}}{z}dz+df_1$ and $-\cfrac{K^3}{3}+CK+C'=-\cfrac{1}{3}(K-2\mu)
  (K+\mu)^2$ into (\ref{dKequ}) to get on $U \setminus \{p_i\}$
  \BE\label{dKequC2}
   (-3)\frac{dK}{(K-2\mu)(K+\mu)^2}=\frac{\lambda_{-1}dz}{z}+\frac{\overline{\lambda_{-1}}}{\bar{z}}
   d\bar{z}+d(2Re(f_1)).
  \EE
   Suppose
  \[ \frac{1}{(K-2\mu)(K+\mu)^2}=\frac{\beta_1}{K-2\mu}+\frac{\beta_2}{K+\mu}+
  \frac{\beta_3}{(K+\mu)^2}.
  \]
  Then
  \begin{eqnarray}\label{IKC2}
  \nonumber  \frac{dK}{(K-2\mu)(K+\mu)^2}= \\
   d[\beta_1 \ln (2\mu-K)+\beta_2 \ln (-\mu-K)-\frac{\beta_3}{K+\mu}],
 \end{eqnarray}
 where we use the fact that on $U \setminus \{p_i\}$ (\ref{gequ}) holds, so
 \[ -\cfrac{K^3}{3}+CK+C'=-\cfrac{1}{3}(K-2\mu)(K+\mu)^2>0~~\text{on}~~U \setminus \{p_i\},
 \]
 that is, $K <2\mu<-\mu$. By (\ref{dKequC2}) and (\ref{IKC2}), we have $\lambda_{-1} \in \mathds{R}$.
 Integrate both sides of (\ref{dKequC2}) to get on $D \setminus \{0\}$
 \begin{eqnarray}\label{IKequC2}
   \nonumber (-3)[\beta_1\ln (2\mu-K)+\beta_2 \ln (-\mu-K)-\frac{\beta_3}{K+\mu}]= \\
   \lambda_{-1} \ln |z|^2+2Re(f_1)+c,
 \end{eqnarray}
 where $c$ is a real constant. \par
 On the other hand, since on $U \setminus \{p_i\}$ (\ref{gequ})
 holds,
 \[ e^{2\varphi}=-\frac{4}{3}(K-2\mu)(K+\mu)^2 \frac{|\Phi|^2}{|z|^{2k}},
 \]
 that is,
 \[ \varphi=\frac{1}{2}(\ln \frac{2\mu-K}{|z|^{2k}}+\ln \frac{4(K+\mu)^2 |\Phi|^2}{3}).
 \]
 Then $\dps{\lim_{z \rightarrow 0}}\cfrac{\varphi+\ln |z|}{\ln |z|}=0$ implies
 \BE\label{cusplimC2t}
  \lim_{z\rightarrow 0}\frac{\ln (2\mu-K)}{\ln |z|}=2(k-1).
 \EE
 Finally we use the same method in the proof of Lemma \ref{C1lem} to get whether $k=1$ or $k>1$, there
 exists a contradiction. Hence we exclude (C2).
 \end{proof}
Finally we exclude (C3):
 \begin{lemma}\label{C3lem}
 (C3) can not hold.
 \end{lemma}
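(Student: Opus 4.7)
The plan is to follow the template of Lemmas \ref{C1lem} and \ref{C2lem}, exploiting the defining hypothesis of (C3) that some $b_i$ equals the simple root $\mu$. I would fix such an index $i$ and choose a local complex coordinate chart $(U,z)$ around $p_i$ with $U\setminus\{p_i\}\subset\Sigma'$, $z(U)=D$, $z(p_i)=0$, writing $g=e^{2\varphi}|dz|^2$ and $F=4e^{-2\varphi}K_{\bar z}$. By Theorem \ref{ehmcuspth}(2), $F$ extends holomorphically across $0$ with $F(0)=0$, so $1/F=\Phi(z)/z^k$ with $\Phi(0)=\lambda_{-k}\neq 0$, giving $\omega=\frac{\lambda_{-1}}{z}dz+df_1$ with $f_1=f_2/z^{k-1}$ and $f_2(0)\neq 0$ (when $k\geq 2$).

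Substituting this into (\ref{dKequ}) and decomposing
\[
\frac{1}{(K-\mu)(K-\mu^*)(K+\mu+\mu^*)}=\frac{A_1}{K-\mu}+\frac{A_2}{K-\mu^*}+\frac{A_3}{K+\mu+\mu^*}
\]
(possible because the three roots are distinct by the hypotheses of (C3)), I would integrate to obtain
\[
(-3)\bigl[A_1\ln|K-\mu|+A_2\ln|K-\mu^*|+A_3\ln|K+\mu+\mu^*|\bigr]=\lambda_{-1}\ln|z|^2+2\,\mathrm{Re}(f_1)+c
\]
on $D\setminus\{0\}$ for some real $c$. Positivity of $g$ fixes the signs inside the logarithms, and equating periods around $0$ of the exact $1$-form $(-3)\,dK/[(K-\mu)(K-\mu^*)(K+\mu+\mu^*)]$ with those of $(\lambda_{-1}/z)dz+(\overline{\lambda_{-1}}/\bar z)d\bar z+d(2\,\mathrm{Re}(f_1))$ forces $\lambda_{-1}\in\mathds{R}$, exactly as in the earlier two lemmas.

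For the endgame, since $b_i=\mu$ we have $K\to\mu$ as $z\to 0$, so $\ln|K-\mu^*|$ and $\ln|K+\mu+\mu^*|$ stay bounded near $0$ thanks to $\mu\neq\mu^*$ and $\mu\neq-(\mu+\mu^*)$. Combining $e^{2\varphi}=-\frac{4}{3}(K-\mu)(K-\mu^*)(K+\mu+\mu^*)|\Phi|^2/|z|^{2k}$ with the cusp limit $\lim_{z\to 0}(\varphi+\ln|z|)/\ln|z|=0$ pins down
\[
\lim_{z\to 0}\frac{\ln|K-\mu|}{\ln|z|}=2(k-1).
\]
From here the argument is mechanically identical to Lemma \ref{C1lem}: for $k=1$, dividing the integrated identity by $\ln|z|$ and letting $z\to 0$ gives $\lambda_{-1}=0$, contradicting $\Phi(0)\neq 0$; for $k\geq 2$, multiplying by $z^{k-1}$ and taking the limit would force $z^{k-1}/\bar z^{k-1}$ to converge to a nonzero constant, which is impossible. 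The main obstacle I anticipate is verifying that the two ``inert'' logarithms $\ln|K-\mu^*|$ and $\ln|K+\mu+\mu^*|$ really stay bounded and that no unforeseen cross-cancellation among the three log terms disturbs the asymptotic balance; the nondegeneracy conditions built into (C3) are precisely what reduce the analysis to the familiar single-log pattern of the previous lemmas.
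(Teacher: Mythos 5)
Your proposal follows the paper's proof essentially step for step: fixing $p_i$ with $b_i=\mu$, the same local Laurent setup for $1/F$, the same three-term partial fraction decomposition and integrated log identity, the reality of $\lambda_{-1}$ via periods, the cusp-limit identity $\lim_{z\to 0}\ln|K-\mu|/\ln|z|=2(k-1)$ (using that the other two logarithms stay bounded since $K\to\mu$ and the roots are distinct), and the same endgame borrowed from Lemma \ref{C1lem} ($k=1$ forces $\lambda_{-1}=0$, while $k\geq 2$ forces $z^{k-1}/\bar z^{k-1}$ to converge). The paper itself leaves the endgame as "the same method as Lemma \ref{C1lem}," so your spelled-out version is a faithful and correct completion of exactly the intended argument.
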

 \begin{proof}
 Suppose (C3) holds. Fix a point $p_i$ which satisfies $b_i=\mu$. Let $(U,z)$ be a \lccc around $p_i$ such that
 $U \setminus \{p_i\} \subset \Sigma'$, $z(U)$ is a disk $D$ and $z(p_i)=0$.
 Suppose $g=e^{2\varphi}|dz|^2$ on $U \setminus \{p_i\}$.
  Then $F=4e^{-2\varphi}K_{\bar{z}}$ is a holomorphic function on $D$, $0$ is a unique zero
  of $F$ on $D$ and $\omega=\cfrac{dz}{F}$ on $U \setminus \{p_i\}$. Suppose $\cfrac{1}{F}$
  has the following expression on $D \setminus \{0\}$:
 \BE\label{laurentexpr3}
   \frac{1}{F}=\frac{\lambda_{-k}}{z^k}+\cdots+\frac{\lambda_{-2}}{z^2}+\frac{\lambda_{-1}}{z}+
   \sum_{m=0}^\infty \lambda_m z^m=\frac{\Phi(z)}{z^k},
  \EE
  where $\Phi(z)$ is a holomorphic function on $D$ with $\Phi(0)=\lambda_{-k}\neq 0$. Then
  \BE\label{omegaexpr3}
   \omega=\frac{dz}{F}=\frac{\lambda_{-1}}{z}dz+df_1,
  \EE
   where $f_1=\cfrac{f_2}{z^{k-1}}$ and $f_2$ is a holomorphic function on $D$ with $f_2(0) \neq 0$.
  Then we substitute $\omega=\cfrac{\lambda_{-1}}{z}dz+df_1$ and $-\cfrac{K^3}{3}+CK+C'=-\cfrac{1}{3}(K-\mu)
  (K-\mu^*)(K+\mu+\mu^*)$
  into (\ref{dKequ}) to get on $U \setminus \{p_i\}$
  \BE\label{dKequC3}
   (-3) \frac{dK}{(K-\mu)(K-\mu^*)(K+\mu+\mu^*)}=\frac{\lambda_{-1}dz}{z}+
   \frac{\overline{\lambda_{-1}}}{\bar{z}}
   d\bar{z}+d(2Re(f_1)).
  \EE
   Suppose
  \[ \frac{1}{(K-\mu)(K-\mu^*)(K+\mu+\mu^*)}=\frac{\beta_1}{K-\mu}+\frac{\beta_2}{K-\mu^*}
  +\frac{\beta_3}{K+\mu+\mu^*}.
  \]
  Then
  \begin{eqnarray}\label{IKC3}
  \nonumber  \frac{dK}{(K-\mu)(K-\mu^*)(K+\mu+\mu^*)}= \\
   d(\beta_1 \ln |K-\mu|+\beta_2 \ln |K-\mu^*|+\beta_3 \ln |K+\mu+\mu^*|).
 \end{eqnarray}
 Similar to above, $\lambda_{-1} \in \mathds{R}$. Integrate both sides of
 (\ref{dKequC3}) to get on $D \setminus \{0\}$
 \begin{eqnarray}\label{IKequC3}
   \nonumber (-3)(\beta_1 \ln |K-\mu|+\beta_2 \ln |K-\mu^*|+\beta_3 \ln |K+\mu+\mu^*|)= \\
   \lambda_{-1} \ln |z|^2+2Re(f_1)+c,
 \end{eqnarray}
 where $c$ is a real constant. \par
 On the other hand, since on $U \setminus \{p_i\}$ (\ref{gequ}) holds,
 \[ e^{2\varphi}=\frac{4}{3}|K-\mu||K-\mu^*||K+\mu+\mu^*|\frac{|\Phi|^2}{|z|^{2k}},
  \]
  that is,
 \[ \varphi=\frac{1}{2}(\ln \frac{|K-\mu|}{|z|^{2k}}+\ln \frac{4|K-\mu^*||K+\mu
  +\mu^*||\Phi|^2}{3}).
  \]
  Then $\dps{\lim_{z \rightarrow 0}}\cfrac{\varphi+\ln |z|}{\ln |z|}=0$ implies
 \BE\label{cusplimC3t}
  \lim_{z\rightarrow 0}\frac{\ln |K-\mu|}{\ln |z|}=2(k-1).
 \EE
 We can also use the same method in the proof of Lemma \ref{C1lem} to exclude (C3).
 \end{proof}
 By Lemma \ref{cusplemma}, Lemma \ref{C1lem}, Lemma \ref{C2lem} and Lemma \ref{C3lem},
 we obtain the following theorem:
  \begin{theorem}\label{cuspth1}
   There exists a negative number $\mu$ such that $b_1=b_2=\cdots=b_I=\mu$ and
 \[ -\frac{K^3}{3}+CK+C'=-\frac{1}{3}(K-\mu)^2(K+2\mu).
 \]
 \end{theorem}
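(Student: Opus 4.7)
The plan is to assemble the pieces already established in the preceding lemmas. By Lemma \ref{cusplemma}, each limit $b_i = \lim_{p\to p_i} K(p)$ is a real root of the cubic $-\tfrac{K^3}{3}+CK+C'=0$, and by part (3) of Theorem \ref{ehmcuspth} each $b_i$ is strictly negative. The discussion just before Lemma \ref{C1lem} shows that, because the cubic has the negative roots $b_1,\dots,b_I$, it cannot have a triple root (a triple root would force the root to be $0$), so its factorization falls into exactly one of the four cases (C1)--(C4).

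The second step is to invoke Lemmas \ref{C1lem}, \ref{C2lem}, \ref{C3lem} to rule out (C1), (C2), (C3). This leaves case (C4):
\[
-\tfrac{K^3}{3}+CK+C' \;=\; -\tfrac{1}{3}(K-\mu)^2(K+2\mu),\qquad \mu<0,
\]
which is precisely the polynomial identity asserted in the theorem.

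The third and final step is to identify each $b_i$ with $\mu$. The real roots of $-\tfrac{1}{3}(K-\mu)^2(K+2\mu)=0$ are $K=\mu$ (double) and $K=-2\mu$. Since $\mu<0$ we have $-2\mu>0$, so $\mu$ is the only negative root. Because each $b_i<0$ is a root, we must have $b_1=b_2=\cdots=b_I=\mu$, completing the proof.

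I do not expect any serious obstacle here: everything of substance has been done in Lemma \ref{cusplemma} and Lemmas \ref{C1lem}--\ref{C3lem}. The only thing to be careful about is bookkeeping of signs, namely that $b_i<0$ forces the selection of $\mu$ rather than $-2\mu$, and that the exclusion of a triple root (observed in the paragraph preceding Lemma \ref{C1lem}) is what legitimately narrows the situation to the four listed cases.
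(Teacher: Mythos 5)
Your proposal is correct and follows essentially the same route as the paper's own proof: invoke Lemma \ref{cusplemma} together with the exclusions in Lemmas \ref{C1lem}--\ref{C3lem} to land in case (C4), then note that $\mu$ is the unique negative root of $-\frac{1}{3}(K-\mu)^2(K+2\mu)=0$ (since $-2\mu>0$), which forces $b_1=\cdots=b_I=\mu$. The paper's argument is identical, merely stating the uniqueness of the negative root without spelling out the sign check you make explicit.
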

 \begin{proof}
 Since we have excluded cases (C1), (C2) and (C3), (C4) holds, that is,
 $-\cfrac{K^3}{3}+CK+C'=-\cfrac{1}{3}(K-\mu)^2(K+2\mu)$,
 where $\mu<0$. By Lemma \ref{cusplemma} each $b_i$, $i=1,2,\cdots,I$, is a root of
  $-\cfrac{K^3}{3}+CK+C'=0$. Since (C4) holds, $-\cfrac{K^3}{3}+CK+C'=0$ has
  a unique negative root $\mu$. Hence
 $b_1=b_2=\cdots=b_I=\mu$. We prove the theorem.
 \end{proof}
 By Theorem \ref{cuspth1}, we get on $\Sigma'$ $K< -2\mu$. Then there are two possibilities:
 one is $\forall p \in \Sigma'$, $K(p)<\mu$; the other is
 $\forall p \in \Sigma'$, $\mu<K(p)<-2\mu$. In the following
 we will get $\forall p \in \Sigma', ~\mu<K(p)<-2\mu$. Before the result, we will give
 the following three lemmas.
 \begin{lemma}\label{boundlemma1}
  If $\forall p \in \Sigma',~K(p)<\mu$, then each  $p_i,~i=1,2,\cdots,I$, is a simple pole of
 $\omega$ and the residue of $\omega$ at each $p_i$, $i=1,2,\cdots,I,$ is a negative real number.
 \end{lemma}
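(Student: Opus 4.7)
The plan is to parallel the analysis in Lemmas \ref{C1lem}--\ref{C3lem}, modified by the double root of the cubic from Theorem \ref{cuspth1}. Fix $p_i$ and a local chart $(U,z)$ with $z(p_i)=0$ and $g=e^{2\varphi}|dz|^2$; set $F=4e^{-2\varphi}K_{\bar z}$ and expand $1/F=\Phi(z)/z^k$ with $\Phi(0)\neq 0$, so that $\omega=\frac{\lambda_{-1}}{z}\,dz+df_1$, where $f_1=f_2/z^{k-1}$ with $f_2(0)\neq 0$ when $k\geq 2$, and $f_1$ is holomorphic at $0$ when $k=1$. The goal is to show $k=1$ and $\lambda_{-1}<0$.

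Substitute $-K^3/3+CK+C'=-\tfrac{1}{3}(K-\mu)^2(K+2\mu)$ into $(\ref{dKequ})$, apply the partial-fraction decomposition of $[(K-\mu)^2(K+2\mu)]^{-1}$ (which, thanks to the double root, contributes a $\frac{1}{K-\mu}$ antiderivative in addition to logarithms of $\mu-K$ and $-2\mu-K$), and use the hypothesis $K<\mu$ to fix signs $\mu-K>0$ and $-2\mu-K>0$. Integration yields, on $z(U)\setminus\{0\}$,
\[
\frac{\ln(\mu-K)}{3\mu^2}+\frac{1}{\mu(K-\mu)}-\frac{\ln(-2\mu-K)}{3\mu^2}=\lambda_{-1}\ln|z|^2+2\mathrm{Re}(f_1)+c,
\]
with $\lambda_{-1}\in\mathds{R}$ by the same exactness argument as in Lemma \ref{C1lem}. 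Combining $(\ref{gequ})$ with the cusp condition $(\ref{cusplim})$ then gives the asymptotic $\lim_{z\to 0}\ln(\mu-K)/\ln|z|=k-1$, which controls $\mu-K$ up to subpolynomial factors.

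To prove $k=1$, suppose $k\geq 2$ for contradiction. Pick a ray $z=re^{i\theta}$ on which $\mathrm{Re}\bigl(f_2(0)e^{-i(k-1)\theta}\bigr)<0$, possible since $f_2(0)\neq 0$; along it $2\mathrm{Re}(f_1)\to-\infty$ at rate $r^{-(k-1)}$. Rearrange the integrated identity as
\[
\frac{1}{\mu(K-\mu)}-2\mathrm{Re}(f_1)=\lambda_{-1}\ln|z|^2+c+\frac{\ln(-2\mu-K)-\ln(\mu-K)}{3\mu^2};
\]
since $\frac{1}{\mu(K-\mu)}\geq 0$, the left-hand side is at least $-2\mathrm{Re}(f_1)$, tending to $+\infty$ at rate $r^{-(k-1)}$, while the right-hand side is only $O(\ln r)$ by the cusp asymptotic---contradiction, so $k=1$. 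With $k=1$, $2\mathrm{Re}(f_1)$ is bounded, the two log terms on the left are $o(\ln|z|)$, and $\frac{1}{\mu(K-\mu)}\to+\infty$; equating with the right-hand side $2\lambda_{-1}\ln|z|+O(1)$, and noting $\ln|z|\to-\infty$, forces $\lambda_{-1}<0$.

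The main obstacle is the $k\geq 2$ step: unlike in Lemmas \ref{C1lem}--\ref{C3lem}, multiplying by $z^{k-1}$ and taking limits no longer kills the $\frac{1}{\mu(K-\mu)}$ term, which could a priori be of the same order as the obstruction one wants to extract. The ray argument above bypasses this by using the definite positive sign of $\frac{1}{\mu(K-\mu)}$ to prevent cancellation against the sign-oscillating blow-up of $2\mathrm{Re}(f_1)$---the distinctive new feature of the double-root case (C4).
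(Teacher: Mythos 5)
Your proof is correct, and although it is built on the same scaffolding as the paper's own argument (the expansion $1/F=\Phi(z)/z^{k}$, the partial fractions with the extra $\frac{1}{K-\mu}$ antiderivative produced by the double root, the integrated identity (\ref{IKequ'}), and the cusp asymptotic (\ref{cusplim'})), your exclusion of $k\geq 2$ runs along a genuinely different line. The paper multiplies (\ref{IKequ'}) by $\mu-K$, shows $(\mu-K)/z^{k-2}\to 0$ so that only the leading coefficient $\nu_0=f_2(0)$ survives, obtains the nonzero limit $\lim_{z\to 0}(\mu-K)\bigl(\frac{\nu_0}{z^{k-1}}+\frac{\overline{\nu_0}}{\bar z^{k-1}}\bigr)=-9\beta_1\mu$, and then derives a contradiction by restricting to the ray on which $\nu_1\cos(k-1)\theta_0+\nu_2\sin(k-1)\theta_0=0$, where that limit would have to vanish. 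You instead restrict to the opposite kind of ray, one where $\mathrm{Re}\bigl(\nu_0e^{-\sqrt{-1}(k-1)\theta_0}\bigr)<0$, and exploit the sign $\frac{1}{\mu(K-\mu)}>0$ (valid precisely because $K<\mu<0$) to get the one-sided bound $\frac{1}{\mu(K-\mu)}-2\mathrm{Re}(f_1)\geq-2\mathrm{Re}(f_1)\geq 2\delta r^{-(k-1)}$ for small $r$, which cannot equal a right-hand side of size $O(|\ln r|)$. Your version is shorter and needs no intermediate limit computation, and, as you observe, it is designed exactly around the feature of case (C4) that defeats the Lemma \ref{C1lem}-style ``multiply by $z^{k-1}$'' device; the paper's version, in exchange, computes a precise limit and stays in the same mold as the arguments it reuses in Lemma \ref{boundlemma2}, Theorem \ref{sth} and Theorem \ref{conicalth}. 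Your $k=1$ step (reading the sign of $\lambda_{-1}$ off the identity as $\ln|z|\to-\infty$) is essentially equivalent to the paper's division by $\ln|z|$.

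Two small points of rigor, neither a real gap. First, your assertion $\frac{1}{\mu(K-\mu)}\to+\infty$ presupposes $K\to\mu$ at $p_i$; this is legitimate, being exactly Theorem \ref{ehmcuspth}(3) combined with Theorem \ref{cuspth1}, both established before this lemma, but it should be cited, since (\ref{cusplim'}) with $k=1$ alone does not force $\mu-K\to 0$. Second, when ruling out $\lambda_{-1}=0$ you need the left-hand side to actually tend to $+\infty$, i.e.\ that $\frac{1}{\mu(K-\mu)}$ dominates the negative unbounded term $\frac{\ln(\mu-K)}{3\mu^2}$; this is the elementary fact that $1/x$ beats $|\ln x|$ as $x=\mu-K\to 0^{+}$ and deserves one line (alternatively, note as the paper implicitly does that $k=1$ forces $\lambda_{-1}=\Phi(0)\neq 0$, so only the sign remains to be determined, and that follows from positivity of $\frac{1}{\mu(K-\mu)}$ together with the $o(|\ln|z||)$ bound on the logarithmic terms).
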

 \begin{proof}
    Pick any $p_i$ and let $(U,z)$ be a \lccc around $p_i$ such that $U \setminus \{p_i\} \subset \Sigma'$,
  $z(U)$ is a disk $D$ and $z(p_i)=0$. Suppose $g=e^{2\varphi}|dz|^2$ on $U \setminus \{p_i\}$.
  Then $F=4e^{-2\varphi}K_{\bar{z}}$ is a holomorphic function on $D$, $0$ is a unique zero
  of $F$ on $D$ and $\omega=\cfrac{dz}{F}$ on $U \setminus \{p_i\}$. Suppose $\cfrac{1}{F}$
  has the following expression on $D \setminus \{0\}$:
  \BE\label{laurentexpr4}
   \frac{1}{F}=\frac{\lambda_{-k}}{z^k}+\cdots+\frac{\lambda_{-2}}{z^2}+\frac{\lambda_{-1}}{z}+
   \sum_{m=0}^\infty \lambda_m z^m=\frac{\Phi(z)}{z^k},
  \EE
  where $\Phi(z)$ is a holomorphic function on $D$ with $\Phi(0)=\lambda_{-k}\neq 0$. Then
  \BE\label{omegaexpr4}
   \omega=\frac{dz}{F}=\frac{\lambda_{-1}}{z}dz+df_1,
  \EE
  where $f_1=\cfrac{f_2}{z^{k-1}}$ and $f_2$ is a holomorphic function on $D$ with $f_2(0) \neq 0$.
  Then we substitute $\omega=\cfrac{\lambda_{-1}}{z}dz+df_1$ and $-\cfrac{K^3}{3}+CK+C'=-\cfrac{1}{3}(K-\mu)^2
  (K+2\mu)$ into (\ref{dKequ}) to get on $U \setminus \{p_i\}$
  \BE\label{dKequcusp}
   (-3) \frac{dK}{(K-\mu)^2(K+2\mu)}=\frac{\lambda_{-1}dz}{z}+
   \frac{\overline{\lambda_{-1}}}{\bar{z}}
   d\bar{z}+d(2Re(f_1)).
  \EE
   Suppose
  $$ \frac{1}{(K-\mu)^2(K+2\mu)}=\frac{\beta_1}{K+2\mu}+\frac{\beta_2}{K-\mu}+\frac{\beta_3}
  {(K-\mu)^2}. $$
 Then $\beta_1=\cfrac{1}{9\mu^2},~\beta_2=-\cfrac{1}{9\mu^2},~\beta_3=\cfrac{1}{3\mu}$ and
 on $U \setminus \{p_i\}$
\begin{equation}\label{dKequ'}
  \frac{dK}{(K-\mu)^2 (K+2\mu)}=\beta_1d[\ln (-2\mu-K)-\ln (\mu-K)-\frac{3\mu}{K-\mu}].
\end{equation}
 We can also get $\lambda_{-1} \in \mathds{R}$. Integrate both
 sides of (\ref{dKequcusp}) to get on $D \setminus \{0\}$,
 \bey\label{IKequ'}
 \nn (-3\beta_1)[\ln (-2\mu-K)-\ln (\mu-K)-\frac{3\mu}{K-\mu}]= \\
 \lambda_{-1} \ln |z|^2+2Re(f_1)+c,
 \eey
 where $c$ is a real constant. \par
 On the other hand,
 \[ e^{2\varphi}=-\frac{4}{3}(K-\mu)^2(K+2\mu)\frac{|\Phi|^2}{|z|^{2k}},
 \]
 that is,
 \[ \varphi=\frac{1}{2}(\ln \frac{(K-\mu)^2}{|z|^{2k}}+\ln \frac{4(-2\mu-K)|\Phi|^2}{3}).
 \]
 $\dps{\lim_{z\rightarrow 0}\frac{\varphi+\ln |z|}{\ln |z|}=0}$ implies
 \BE\label{cusplim'}
  \lim_{z\rightarrow 0}\frac{\ln (\mu-K)}{\ln |z|}=k-1.
 \EE
 Suppose that $k>1$. Multiply both sides of (\ref{IKequ'}) by $\mu-K$ to get
 \bey\label{modifiedIKequ'}
 \nn (\mu-K)(-3\beta_1)[\ln (-2\mu-K)-\ln (\mu-K)+\frac{3\mu}{\mu-K}]= \\
  (\mu-K)(\lambda_{-1}\ln |z|^2+2Re(f_1)+c).
 \eey
 Then let $z\rightarrow 0$ on both sides of (\ref{modifiedIKequ'}) and take limits to get
 \BE\label{lim1}
 \lim_{z \rightarrow 0} (\mu-K)2Re(f_1)=\lim_{z\rightarrow 0}(\mu-K)(\frac{f_2}{z^{k-1}}+
 \frac{\overline{f_2}}{\bar{z}^{k-1}})= -9\beta_1 \mu.
 \EE
 Suppose
 \[ f_2(z)=\nu_0+zf_3(z),
 \]
 where $\nu_0$ is a nonzero complex number and $f_3(z)$ is a holomorphic function on $D$. Then
 \[ \frac{f_2(z)}{z^{k-1}}=\frac{\nu_0}{z^{k-1}}+\frac{f_3(z)}{z^{k-2}}~~\text{on}~~D \setminus \{0\}.
 \]
 We claim that
 \BE\label{lim2}
 \lim_{z \rightarrow 0} \frac{\mu-K}{z^{k-2}}=0.
 \EE
 Since $\dps{\lim_{z\rightarrow 0}\frac{\ln(\mu-K)}{\ln |z|}}=k-1$, if $z$ satisfies $0<|z|<\delta_1$
 ($\delta_1$ is small enough), $\dps{\frac{\ln (\mu-K)}{\ln |z|}}>d>k-2$. Then
 $\ln(\mu-K)<d \ln|z|$, that is, $\mu-K<|z|^d$, so
 \[ 0<\frac{\mu-K}{|z|^{k-2}}<\frac{|z|^d}{|z|^{k-2}}.
 \]
 Hence (\ref{lim2}) holds. By (\ref{lim2}), we also get
 \BE\label{lim3}
   \lim_{z \rightarrow 0} \frac{\mu-K}{\bar{z}^{k-2}}=0.
 \EE
 Then by (\ref{lim1}), (\ref{lim2}) and (\ref{lim3}),
 \BE\label{lim4}
 \lim_{z \rightarrow 0}(\mu-K)(\frac{\nu_0}{z^{k-1}}+\frac{\overline{\nu_0}}{\bar{z}^{k-1}})=
 -9\beta_1 \mu.
 \EE
 Let $\nu_0=\nu_1+\sqrt{-1}\nu_2(\nu_1,~\nu_2 \in \mathds{R})$ and $z=re^{\sqrt{-1}\theta}
 (0<r<\delta_2,~0 \leq \theta<2\pi)$.
 Then
 \[ \frac{\nu_0}{z^{k-1}}+\frac{\overline{\nu_0}}{\bar{z}^{k-1}}=\frac{2}{r^{k-1}}
 [\nu_1 \cos (k-1)\theta+\nu_2 \sin (k-1)\theta]
 \]
 and (\ref{lim4}) is
 \BE\label{lim5}
 \lim_{r \rightarrow 0}(\mu-K)\frac{2}{r^{k-1}}[\nu_1 \cos (k-1)\theta+\nu_2 \sin (k-1)\theta]
 =-9\beta_1 \mu.
 \EE
 Now fix a $\theta_0 \in [0,2\pi)$ such that
 \[\left\{
   \begin{array}{ll}
     \cos (k-1)\theta_0=\cfrac{-\nu_2}{\sqrt{\nu_1^2+\nu_2^2}}~, \\
     \sin (k-1)\theta_0=\cfrac{\nu_1}{\sqrt{\nu_1^2+\nu_2^2}}~.
   \end{array}
 \right.
 \]
 Then
 \[\lim_{r \rightarrow 0}[\mu-K(r\cos \theta_0,r \sin \theta_0)]\frac{2}{r^{k-1}}[\nu_1 \cos (k-1)
\theta_0+\nu_2 \sin (k-1)
\theta_0] =-9\beta_1 \mu \neq 0,
 \]
 however, since $\nu_1 \cos (k-1)\theta_0+\nu_2 \sin (k-1)\theta_0=0$,
 \[ \lim_{r \rightarrow 0}[\mu-K(r\cos \theta_0,r \sin \theta_0)]\frac{2}{r^{k-1}}[\nu_1 \cos (k-1)
\theta_0+\nu_2 \sin (k-1)
\theta_0] =0.
 \]
 It is a contradiction. \par
Consequently $k=1$, that is, $p_i$ is a simple pole of $\omega$.
 Further divide both sides of (\ref{IKequ'}) by $\ln |z|$, let $z \rightarrow 0$ and take limits to get
 \[ \lim_{z\rightarrow 0} \frac{\frac{9}{2}\beta_1 \mu}{(K-\mu)\ln |z|}=\lambda_{-1}.
 \]
 Therefore $Res_{p_i}(\omega)=\lambda_{-1}<0$. We prove the lemma.
 \end{proof}

 \begin{lemma}\label{boundlemma2}
 If $\forall p \in \Sigma',~K(p)<\mu$, then $\mathfrak{S}=\varnothing$.
 \end{lemma}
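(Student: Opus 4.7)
The plan is to assume $\mathfrak{S} \neq \varnothing$, pick $e_s \in \mathfrak{S}$, and derive a contradiction. First, set up local coordinates: choose a \lccc $(W, w)$ around $e_s$ with $w(e_s) = 0$ and $W \setminus \{e_s\} \subset \Sigma'$. Since $e_s$ is an interior point of $\Sigma^*$, the metric $g = e^{2\psi}|dw|^2$ is smooth at $e_s$, and thus $G := 4e^{-2\psi}K_{\bar w}$ is holomorphic on $W$, with a zero of some order $m \geq 1$ at $e_s$; equivalently, $\omega = dw/G$ has a pole of order $m$ at $e_s$.

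The first step is to pin down $K(e_s)$ and the local behaviour of $K$. Evaluating the identity $GK_w = -\frac{1}{3}(K-\mu)^2(K+2\mu)$ (which is the local form of Proposition \ref{gradeprop} combined with Theorem \ref{cuspth1}) at $e_s$ forces $(K(e_s)-\mu)^2(K(e_s)+2\mu) = 0$. Combined with $K \leq \mu$ near $e_s$ (the hypothesis plus continuity of $K$), this gives $K(e_s) = \mu$. Since $K$ is smooth at $e_s$, $K(e_s) = \mu$, and $dK(e_s) = 0$, Taylor's formula yields the key quadratic bound $|K(w) - \mu| \leq C|w|^2$ for $w$ near $0$.

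The second step is to integrate $dK = P(K)(\omega + \bar\omega)$ on $W \setminus \{e_s\}$, where $P(K) = -\frac{1}{3}(K-\mu)^2(K+2\mu)$. Partial fractions give the antiderivative
\[
 H(K) = \frac{1}{\mu(K-\mu)} + \frac{1}{3\mu^2}\ln|K-\mu| - \frac{1}{3\mu^2}\ln|K+2\mu|.
\]
Because $K < \mu$ on $W \setminus \{e_s\}$ with $\mu < 0$, the dominant term $1/(\mu(K-\mu))$ tends to $+\infty$; combined with $|K-\mu| \leq C|w|^2$, this forces $H(K(w)) \geq C'|w|^{-2}$ for $w$ near $0$. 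On the other hand, expanding $1/G$ in a Laurent series at $0$ and splitting off the logarithmic part writes $\omega = (\lambda_{-1}/w)dw + df_1$, where $f_1$ is holomorphic if $m=1$ and $f_1(w) = \hat f(w)/w^{m-1}$ with $\hat f(0) \neq 0$ if $m \geq 2$. Integrating the relation above yields
\[
 H(K(w)) = \lambda_{-1}\ln|w|^2 + 2\mathrm{Re}(f_1(w)) + c,
\]
with $\lambda_{-1} \in \mathds{R}$ by the same single-valuedness argument used in Lemma \ref{boundlemma1}.

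The final step is a size comparison, split into two cases. If $m = 1$, the right-hand side is $O(|\log|w|\,|)$, which cannot equal a left-hand side growing at least like $|w|^{-2}$---contradiction. If $m \geq 2$, writing $w = re^{\sqrt{-1}\theta}$ and $\phi_0 = \arg\hat f(0)$, the dominant term of $2\mathrm{Re}(f_1(w))$ is
\[
 2\mathrm{Re}\!\left(\frac{\hat f(0)}{w^{m-1}}\right) = \frac{2|\hat f(0)|}{r^{m-1}}\cos\bigl((m-1)\theta - \phi_0\bigr),
\]
which oscillates in sign as $\theta$ varies; choosing $\theta_0$ so that this cosine is negative and letting $r \to 0^+$ along this ray forces the right-hand side to $-\infty$, while the left-hand side tends to $+\infty$---again a contradiction. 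The main obstacle is the $m \geq 2$ case, which requires the directional analysis of the higher-order pole of $\omega$ in the style of Lemmas \ref{C1lem}--\ref{C3lem}; the $m=1$ case is cleaner, relying only on the quadratic bound $|K-\mu| \leq C|w|^2$ from the smoothness of $g$ at $e_s$ against the merely logarithmic growth allowed by a simple pole of $\omega$ with real residue. Hence $\mathfrak{S} = \varnothing$.
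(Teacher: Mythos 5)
Your proof is correct, and while it shares the inevitable setup with the paper (Laurent expansion of $1/G$ at $e_s$, the real residue $\lambda_{-1}$, and integration of $dK=-\tfrac13(K-\mu)^2(K+2\mu)(\omega+\bar\omega)$ to the identity $H(K)=\lambda_{-1}\ln|w|^2+2\mathrm{Re}(f_1)+c$), the decisive comparison step is genuinely different. The paper never splits into cases on the pole order: it feeds the positivity and continuity of the smooth metric coefficient $e^{2\varphi}$ at $e_s$ into the identity $e^{2\varphi}=-\tfrac43(K-\mu)^2(K+2\mu)|\Phi|^2/|z|^{2k}$ to obtain the \emph{exact} rate $\lim_{z\to 0}(\mu-K)/|z|^k=A>0$, which ties the vanishing order of $\mu-K$ to the pole order $k$; multiplying the integrated identity by $|z|^k$ then gives a nonzero limit on the left (from the dominant term $1/(\mu(K-\mu))$) and limit $0$ on the right (since $|z|^k\,|f_1|=O(|z|)$), a single contradiction uniform in $k\geq 1$. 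You instead use only the one-sided Taylor bound $|K-\mu|\leq C|w|^2$, coming from smoothness of $K$ and $dK(e_s)=0$, which is weaker because it does not link the rate to $m$, and you compensate with a case split: for $m=1$ a growth comparison ($|w|^{-2}$ against logarithmic growth), and for $m\geq 2$ a directional argument choosing a ray on which the polar part of $2\mathrm{Re}(f_1)$ is negative, so the right side tends to $-\infty$ while the left side stays positive. Your $m\geq 2$ analysis is exactly in the style the paper uses elsewhere (the exclusions in Lemmas \ref{C1lem}--\ref{C3lem} and the $k>1$ step of Theorem \ref{sth}), so it is consistent with the paper's toolbox; what the paper's route buys is brevity and no case distinction, while yours buys economy of input---you never need the precise constant $A>0$ from the metric, only the sign hypothesis $K<\mu$ and smoothness of $K$ at the critical point. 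Incidentally, combining your quadratic bound with the paper's exact rate shows that $k\geq 2$ is in fact forced in this situation, so the simple-pole case could never actually occur; both proofs dispose of it anyway.
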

 \begin{proof}
 If $\mathfrak{S} \neq \varnothing$, pick a $e_s$ and let $(U,z)$ be a \lccc around $e_s$ such that
  $U \setminus \{e_s\} \subset \Sigma'$, $z(U)$ is a disk $D$ and $z(e_s)=0$.
  Suppose $g=e^{2\varphi}|dz|^2$ on $U$.
  Then $F=4e^{-2\varphi}K_{\bar{z}}$ is a holomorphic function on $D$, $0$ is the unique zero
  of $F$ on $D$ and $\omega=\cfrac{dz}{F}$ on $U \setminus \{e_s\}$. Suppose $\cfrac{1}{F}$
  has the following expression on $D \setminus \{0\}$:
  \BE\label{laurentexpr5}
   \frac{1}{F}=\frac{\lambda_{-k}}{z^k}+\cdots+\frac{\lambda_{-2}}{z^2}+\frac{\lambda_{-1}}{z}+
   \sum_{m=0}^\infty \lambda_m z^m=\frac{\Phi(z)}{z^k},
  \EE
  where $\Phi(z)$ is a holomorphic function on $D$ with $\Phi(0)=\lambda_{-k}\neq 0$. Then
  \BE\label{omegaexpr5}
   \omega=\frac{\lambda_{-1}}{z}dz+df_1,
  \EE
  where $f_1=\cfrac{f_2}{z^{k-1}}$ and $f_2$ is a holomorphic function on $D$ with $f_2(0) \neq 0$. \par
 On one hand, on $D \setminus \{0\}$
 \[e^{2\varphi}=-\frac{4}{3}(K-\mu)^2(K+2\mu)\frac{|\Phi|^2}{|z|^{2k}},
 \]
 then
  \[ \lim_{z \rightarrow 0}(-2\mu-K)(K-\mu)^2=0.
 \]
 Since $\forall p \in \Sigma',~K(p)<\mu$, we get
 \[ \lim_{z \rightarrow 0}K=K(e_s)=\mu.
 \]
 Further we obtain
 \BE\label{lims}
  \lim_{z \rightarrow 0}\frac{\mu-K}{|z|^k}=A,~A>0.
 \EE

 On the other hand, we substitute $\omega=\cfrac{\lambda_{-1}}{z}dz+df_1$ and
 $-\cfrac{K^3}{3}+CK+C'=-\cfrac{1}{3}(K-\mu)^2(K+2\mu)$ into (\ref{dKequ}) and integrate to
 get on $D \setminus \{0\}$
 \bey\label{IKequs}
 \nn (-\frac{1}{3\mu^2})[\ln (-2\mu-K)-\ln (\mu-K)-\frac{3\mu}{K-\mu}]= \\
 \lambda_{-1}\ln |z|^2+2Re(f_1)+c,
 \eey
 where $\lambda_{-1},~c \in \mathds{R}$. Then multiply both sides of (\ref{IKequs}) by
 $|z|^k$, let $z \rightarrow 0$ and take limits. By (\ref{lims}), the limit of the left side is
 a nonzero real number. The limit of the right side is $0$. It is a contradiction.
 Therefore we prove the lemma.
 \end{proof}

 \begin{lemma}\label{boundlemma3}
 If $\forall p \in \Sigma',~K(p)<\mu$, then each $q_j,~j=1,2,\cdots,J$, is not a pole of $\omega$.
 \end{lemma}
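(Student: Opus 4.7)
\emph{Proof plan.} The strategy mirrors Lemma \ref{boundlemma1} and Lemma \ref{boundlemma2}: I would argue by contradiction, derive a polynomial-rate decay estimate for $\mu-K$ near $q_j$ from the conical-singularity profile of $g$, and then compare the two sides of the integrated form of (\ref{dKequ}). Suppose for contradiction that some $q_j$ is a pole of $\omega$ of order $k\ge 1$. Pick a local coordinate chart $(V,w)$ around $q_j$ with $w(q_j)=0$ and $V\setminus\{q_j\}\subset\Sigma'$, write $g=e^{2\psi}|dw|^2$, set $F=4e^{-2\psi}K_{\bar w}$, and expand $1/F=\Phi(w)/w^k$ with $\Phi(0)\ne 0$; then as in the previous lemmas
\[
\omega=\frac{\lambda_{-1}}{w}\,dw+df_1,\qquad f_1=\frac{f_2(w)}{w^{k-1}},
\]
with $f_2$ holomorphic and $f_2(0)\ne 0$ when $k\ge 2$.

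Combining Proposition \ref{propertiesofomega}(3) with Theorem \ref{cuspth1} gives
\[
e^{2\psi}=-\tfrac{4}{3}(K-\mu)^2(K+2\mu)\,|\Phi(w)|^2/|w|^{2k},
\]
while the conical-singularity formula (\ref{coneequ2}) reads $e^{2\psi}=h/|w|^{2-2\alpha_j}$ with $h$ positive and continuous at $0$. Equating the two expressions and using the hypothesis $K<\mu<0$, which keeps $-K-2\mu\ge -3\mu>0$ bounded below, I would deduce $K(w)\to\mu$ together with the sharp estimate
\[
\mu-K\sim C\,|w|^{k-1+\alpha_j},\qquad C:=\sqrt{h(0)/\bigl(-4\mu\,|\Phi(0)|^2\bigr)}>0.
\]

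Next I would integrate (\ref{dKequ}) on $V\setminus\{q_j\}$ by the same partial-fraction computation that produced (\ref{IKequ'}); single-valuedness of the potential again forces $\lambda_{-1}\in\mathbb R$, and the identity becomes
\[
-\frac{1}{3\mu^2}\!\left[\ln(-2\mu-K)-\ln(\mu-K)+\frac{3\mu}{\mu-K}\right]=\lambda_{-1}\ln|w|^2+2\,\mathrm{Re}(f_1)+c,\qquad c\in\mathbb R.
\]
On the left, $\ln(-2\mu-K)\to\ln(-3\mu)$ is bounded and $\ln(\mu-K)$ is only logarithmically divergent, while $3\mu/(\mu-K)\sim 3\mu/\bigl(C|w|^{k-1+\alpha_j}\bigr)$ blows up polynomially, so that
\[
\text{LHS}\sim\frac{1}{|\mu|\,C}\,|w|^{-(k-1+\alpha_j)}\longrightarrow+\infty.
\]
On the right, $2\,\mathrm{Re}(f_1)=O(|w|^{-(k-1)})$ when $k\ge 2$ and is bounded when $k=1$, and the term $\lambda_{-1}\ln|w|^2$ is of still lower order. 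Multiplying through by $|w|^{k-1+\alpha_j}$ makes the LHS converge to $1/(|\mu|C)\ne 0$ while the RHS tends to $0$, using $\alpha_j>0$ in an essential way. This contradiction rules out $q_j$ being a pole of $\omega$.

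The principal obstacle is locating the correct dominant divergence on the left-hand side: the logarithmic terms alone only recover residue information (as in Lemma \ref{boundlemma1} at $k=1$), and the genuine obstruction is the pole term $3\mu/(\mu-K)$, whose blow-up rate can only be controlled once the first step pins down the conical-rate decay of $\mu-K$. The hypothesis $K<\mu$ enters twice: it ensures the signs $\mu-K>0$ and $-K-2\mu>-3\mu>0$ needed to make the terms above well-defined and to prevent $\ln(-2\mu-K)$ from competing with the polynomial blow-up.
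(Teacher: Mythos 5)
Your proposal is correct and follows essentially the same route as the paper's proof: the same Laurent expansion of $1/F$, the same comparison of (\ref{gequ}) with the conical profile (\ref{coneequ2}) to obtain $K\to\mu$ and the rate $\mu-K\sim C|w|^{k-1+\alpha_j}$, and the same conclusion by multiplying the integrated identity (\ref{IKequc}) by $|w|^{k-1+\alpha_j}$ so that the left side tends to a nonzero constant while the right side tends to zero. The only cosmetic slip is calling $\lambda_{-1}\ln|w|^2$ of ``lower order'' than $2\,\mathrm{Re}(f_1)$ when $k=1$ (there $f_1$ is bounded and the logarithm dominates), but this does not affect the argument since both terms vanish after multiplication by $|w|^{\alpha_j}$.
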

 \begin{proof}
  Suppose some $q_j$ is a pole of $\omega$. Let $(U,z)$ be a \lccc around $q_j$ such that
  $U \setminus \{q_j\} \subset \Sigma'$, $z(U)$ is a disk $D$ and $z(q_j)=0$.
   Suppose $g=e^{2\varphi}|dz|^2$ on $U \setminus \{q_j\}$.
  Then $F=4e^{-2\varphi}K_{\bar{z}}$ is a holomorphic function on $D$, $0$ is the unique zero
  of $F$ on $D$ and $\omega=\cfrac{dz}{F}$ on $U \setminus \{q_j\}$. Suppose $\cfrac{1}{F}$
  has the following expression on $D \setminus \{0\}$:
  \BE\label{laurentexpr6}
   \frac{1}{F}=\frac{\lambda_{-k}}{z^k}+\cdots+\frac{\lambda_{-2}}{z^2}+\frac{\lambda_{-1}}{z}+
   \sum_{m=0}^\infty \lambda_m z^m=\frac{\Phi(z)}{z^k},
  \EE
  where $\Phi(z)$ is a holomorphic function on $D$ with $\Phi(0)=\lambda_{-k}\neq 0$. Then
  \BE\label{omegaexpr6}
   \omega=\frac{\lambda_{-1}}{z}dz+df_1,
  \EE
  where $f_1=\cfrac{f_2}{z^{k-1}}$ and $f_2$ is a holomorphic function on $D$ with $f_2(0) \neq 0$. \par
 Then similar to above, we can get on $D \setminus \{0\}$
 \bey\label{IKequc}
 \nn (-\frac{1}{3\mu^2})[\ln (-2\mu-K)-\ln (\mu-K)-\frac{3\mu}{K-\mu}]= \\
 \lambda_{-1}\ln |z|^2+2Re(f_1)+c,
 \eey
 where $\lambda_{-1},~c \in \mathds{R}$. \par
 On the other hand, since $q_j$ is a conical singularity of $g$ with the singular angle $2 \pi \alpha_j$,
 we suppose
 \[ g=e^{2\varphi}|dz|^2=\frac{h}{|z|^{2-2\alpha_j}}|dz|^2~~\text{on}~~D \setminus \{0\},
 \]
 where $h$ is a positive continuous function on $D$. By (\ref{gequ}),
 \[ \frac{h}{|z|^{2-2\alpha_j}}=-\frac{4}{3}(K-\mu)^2(K+2\mu)\frac{|\Phi|^2}{|z|^{2k}}.
 \]
 Then
  \[ \lim_{z \rightarrow 0}(-2\mu-K)(K-\mu)^2=0.
 \]
 Since $\forall p \in \Sigma'$, $K(p)<\mu$,
 \[ \lim_{z \rightarrow 0}K=\mu.
 \]
 Further we get
 \[ \lim_{z \rightarrow 0} \frac{\mu-K}{|z|^{k-1+\alpha_j}}=A,~A>0.
 \]
 Then multiply both sides of (\ref{IKequc}) by $|z|^{k-1+\alpha_j}$, let $z \rightarrow 0$ and take limits.
 The limit of the left side is a nonzero real number. The limit of the right side
 is $0$. It is a contradiction. Therefore we prove the lemma.
 \end{proof}
 Now we can get the following theorem:
 \begin{theorem}\label{boundth}
 $\forall p \in \Sigma'$, $\mu<K(p)<-2\mu$.
 \end{theorem}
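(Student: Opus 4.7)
The plan is to observe that Proposition \ref{propertiesofomega} together with Theorem \ref{cuspth1} pins down two possible ranges for $K$ on the connected open set $\Sigma'$, and then to rule out the unwanted range by invoking the residue theorem on the compact Riemann surface $\Sigma$, fed by Lemmas \ref{boundlemma1}--\ref{boundlemma3}.

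First I would rewrite (\ref{gequ}) using Theorem \ref{cuspth1} as $g=-\tfrac{4}{3}(K-\mu)^{2}(K+2\mu)\,\omega\bar{\omega}$ on $\Sigma'$. Since $\nabla K$ has no zero on $\Sigma'$ by the very definition of $\mathfrak{S}$, and since locally $\omega = d\mathfrak{z}/\mathcal{F}$ with $\mathcal{F}=4e^{-2u}K_{\bar{\mathfrak{z}}}$, the 1-form $\omega$ is holomorphic and nowhere vanishing on $\Sigma'$. Positivity of $g$ then forces the strict inequality $(K-\mu)^{2}(K+2\mu)<0$ pointwise on $\Sigma'$, so $K+2\mu<0$ and, crucially, $K\neq\mu$ at every point of $\Sigma'$.

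Next I would exploit connectedness. Because $\Sigma'$ is obtained from the compact connected Riemann surface $\Sigma$ by removing finitely many points, it is itself connected; combined with continuity of $K$ and the fact that $K$ never takes the value $\mu$, this leaves exactly two alternatives on $\Sigma'$: either $\mu<K(p)<-2\mu$ for every $p\in\Sigma'$, which is the desired conclusion, or $K(p)<\mu$ for every $p\in\Sigma'$.

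To exclude the second alternative I would feed it into the three preparatory lemmas. Under $K<\mu$ on $\Sigma'$, Lemma \ref{boundlemma1} forces each cusp $p_{i}$ to be a simple pole of $\omega$ with strictly negative real residue, Lemma \ref{boundlemma2} forces $\mathfrak{S}=\varnothing$ so that no further poles arise at zeros of $\nabla K$, and Lemma \ref{boundlemma3} rules out the conical points $q_{j}$ as poles of $\omega$. Hence the pole set of the meromorphic 1-form $\omega$ on the compact surface $\Sigma$ is exactly $\{p_{1},\ldots,p_{I}\}$, all simple and all with strictly negative real residues. The classical residue theorem on a compact Riemann surface yields $\sum_{i=1}^{I}\mathrm{Res}_{p_{i}}(\omega)=0$, while $I\geq 1$ together with the strict negativity of each residue forces this sum to be strictly negative, a contradiction. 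The real obstacle has already been absorbed into Lemmas \ref{boundlemma1}--\ref{boundlemma3}, whose proofs carried out the delicate local asymptotic analysis at cusps, at candidate essential singularities, and at conical points; the synthesis step for the theorem itself is then a clean residue-theorem bookkeeping.
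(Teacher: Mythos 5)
Your proposal is correct and follows essentially the same route as the paper: the dichotomy ($K<\mu$ everywhere or $\mu<K<-2\mu$ everywhere) is exactly what the paper establishes from Theorem \ref{cuspth1}, positivity of $g$, and connectedness in the discussion preceding the three lemmas, and the exclusion of the alternative $K<\mu$ via Lemmas \ref{boundlemma1}--\ref{boundlemma3} combined with the vanishing of the sum of residues of the meromorphic 1-form $\omega$ on the compact surface $\Sigma$ is precisely the paper's own proof of Theorem \ref{boundth}. The only cosmetic difference is ordering: the paper phrases the contradiction as ``$\omega$ must have other poles, but no candidates remain,'' while you phrase it as ``the pole set is exactly $\{p_1,\ldots,p_I\}$, so the residue sum is strictly negative''---these are the same argument.
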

 \begin{proof}
 Otherwise $\forall p \in \Sigma'$, $K(p)<\mu$. Then by Lemma \ref{boundlemma1}, each $p_i$ is
 a simple pole of $\omega$ and the residue of $\omega$ at each $p_i$ is a negative real number.
 Since $\omega$ is a meromorphic 1-form on $\Sigma$, the sum of the residues of $\omega$ is zero.
That means $\omega$ must have other poles besides $p_1,p_2,\cdots,p_I$. Obviously the set of these
poles of $\omega$ besides $p_1,p_2,\cdots,p_I$ is a subset of $\{e_1,e_2,\cdots,e_S,q_1,q_2,\cdots,q_J\}$.
By Lemma \ref{boundlemma2}, $\mathfrak{S}=\varnothing$ and by Lemma \ref{boundlemma3}, each $q_j$
is not a pole of $\omega$. It is a contradiction. Hence $\forall p \in \Sigma'$, $\mu<K(p)<-2\mu$.
We prove the theorem.
 \end{proof}
Next we will get a theorem about the cusp singularities:
\begin{theorem}\label{cuspth2}
 Each $p_i,~i=1,2,\cdots,I,$ is a simple pole of $\omega$ and the residue of $\omega$ at each $p_i$ is
a positive real number.
\end{theorem}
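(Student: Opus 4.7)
The plan is to mirror the strategy of Lemma \ref{boundlemma1}, now operating in the regime $\mu < K < -2\mu$ guaranteed by Theorem \ref{boundth}. Fix $p_i$ and take a \lccc $(U,z)$ around $p_i$ with $z(p_i) = 0$ and $U \setminus \{p_i\} \subset \Sigma'$. Write $g = e^{2\varphi}|dz|^2$ and set $F = 4e^{-2\varphi}K_{\bar z}$; then $F$ is holomorphic on the disk $z(U) = D$ with $F(0)=0$, and expanding $\frac{1}{F} = \frac{\Phi(z)}{z^k}$ with $\Phi(0) = \lambda_{-k} \neq 0$ gives $\omega = \frac{\lambda_{-1}}{z}dz + df_1$, where $f_1 = \frac{f_2}{z^{k-1}}$, $f_2$ is holomorphic on $D$ and $f_2(0) \neq 0$. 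Substituting this expansion together with Theorem \ref{cuspth1} into \eqref{dKequ} and integrating via partial fractions (now using $K - \mu > 0$ and $-2\mu - K > 0$) yields, as in the earlier lemmas, $\lambda_{-1} \in \mathbb{R}$ and
\begin{equation*}
 -\frac{1}{3\mu^2}\left[\ln(-2\mu-K) - \ln(K-\mu) - \frac{3\mu}{K-\mu}\right] = \lambda_{-1}\ln|z|^2 + 2\mathrm{Re}(f_1) + c
\end{equation*}
on $D \setminus \{0\}$ for some $c \in \mathbb{R}$. Combining \eqref{gequ} with the cusp condition $\lim_{z\to 0}\frac{\varphi + \ln|z|}{\ln|z|} = 0$ produces the key asymptotic $\lim_{z\to 0}\frac{\ln(K-\mu)}{\ln|z|} = k-1$.

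Next I would show $k = 1$. Assume $k > 1$ and multiply the displayed equation by $K - \mu$. The left side tends to $\frac{1}{\mu}$, since both $(K-\mu)\ln(-2\mu-K)$ and $(K-\mu)\ln(K-\mu)$ vanish while $\frac{-3\mu}{K-\mu}(K-\mu) = -3\mu$. On the right, the cusp asymptotic gives $K-\mu \le |z|^d$ for some $d \in (k-2, k-1)$ near $0$, so $(K-\mu)\ln|z|^2$ and, writing $f_2(z) = \nu_0 + zf_3(z)$ with $\nu_0 \neq 0$, the term $(K-\mu)\frac{f_3(z)}{z^{k-2}}$ both tend to $0$. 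Only
\begin{equation*}
 (K-\mu)\left(\frac{\nu_0}{z^{k-1}} + \frac{\overline{\nu_0}}{\bar z^{k-1}}\right) = \frac{2(K-\mu)}{r^{k-1}}\bigl[\nu_1\cos((k-1)\theta) + \nu_2\sin((k-1)\theta)\bigr]
\end{equation*}
remains, where $\nu_0 = \nu_1 + \sqrt{-1}\nu_2$ and $z = re^{\sqrt{-1}\theta}$. Choose $\theta_0$ with $\nu_1\cos((k-1)\theta_0) + \nu_2\sin((k-1)\theta_0) = 0$ and send $z \to 0$ along the ray $\theta = \theta_0$; the right side tends to $0$, contradicting the left-side limit $\frac{1}{\mu}$. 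Therefore $k = 1$, i.e., $p_i$ is a simple pole of $\omega$.

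With $k = 1$, $f_1 = f_2$ is holomorphic, hence bounded, near $0$. Dividing the integrated equation by $\ln|z|^2$ and letting $z \to 0$, I get $\frac{\ln(-2\mu-K)}{\ln|z|^2} \to 0$, $\frac{\ln(K-\mu)}{\ln|z|^2} = \frac{1}{2}\cdot\frac{\ln(K-\mu)}{\ln|z|} \to 0$, and $\frac{2\mathrm{Re}(f_1)+c}{\ln|z|^2} \to 0$, whence $\lim_{z\to 0}\frac{1}{\mu(K-\mu)\ln|z|^2} = \lambda_{-1}$. Since $\mu < 0$, $K - \mu > 0$ and $\ln|z|^2 < 0$ near $0$, the left-hand expression is strictly positive, forcing $\lambda_{-1} \geq 0$. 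To rule out $\lambda_{-1} = 0$, note that then the right side of the integrated equation is bounded near $0$ while the left side diverges to $-\infty$ (both $\frac{-3\mu}{K-\mu}$ and $-\ln(K-\mu)$ tend to $+\infty$, each multiplied by the negative constant $-\frac{1}{3\mu^2}$), a contradiction. Hence $\lambda_{-1} > 0$, which is the desired residue positivity.

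The main obstacle is the direction-dependent vanishing step proving $k = 1$: one must isolate the $\frac{\nu_0}{z^{k-1}}$ leading term of $f_1$, control the decay of $K - \mu$ via the cusp asymptotic precisely enough to kill the remaining off-diagonal contributions, and then exploit the angular freedom in $\theta_0$ to manufacture a contradiction. Everything else is partial-fraction integration and limit-taking paralleling Lemmas \ref{boundlemma1}--\ref{boundlemma3}.
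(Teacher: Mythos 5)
Your proposal is correct and follows essentially the same route as the paper: the paper proves Theorem \ref{cuspth2} by simply re-running the argument of Lemma \ref{boundlemma1} with $K>\mu$ in place of $K<\mu$, which is exactly what you carry out (same Laurent expansion of $1/F$, same integrated partial-fraction identity, same cusp asymptotic $\lim_{z\to 0}\ln(K-\mu)/\ln|z|=k-1$, and the same directional-vanishing trick along the ray $\theta=\theta_0$ to force $k=1$). Your final step explicitly ruling out $\lambda_{-1}=0$ by the divergence of the left side against the boundedness of the right side is a small point the paper glosses over, but it is a refinement of the same argument, not a different one.
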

\begin{proof}
 Using the similar argument in the proof of Lemma \ref{boundlemma1}(the only difference is in the
 proof of Lemma \ref{boundlemma1} $K<\mu$, but here $K>\mu$), we can prove this theorem.
\end{proof}
Then a theorem about $\mathfrak{S}$ will be obtained.
\begin{theorem}\label{sth}
If $\mathfrak{S}\neq \varnothing$, then each $e_s,~s=1,2,\cdots,S$, is a simple pole of $\omega$,
the residue of $\omega$ at each $e_s$, $s=1,2,\cdots,S$, is $-\cfrac{1}{3\mu^2}$ and $K(e_s)=-2\mu$, $s=1,2,\cdots,S$.
\end{theorem}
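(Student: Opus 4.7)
The plan is to adapt the local analysis used in Lemmas \ref{boundlemma1}--\ref{boundlemma3}. Fix $e_s\in\mathfrak{S}$ and choose a \lccc $(U,z)$ with $U\setminus\{e_s\}\subset\Sigma'$, $z(e_s)=0$, and $g=e^{2\varphi}|dz|^{2}$; since $e_s\in\Sigma^{*}$, the metric $g$ is smooth and non-degenerate at $e_s$, so $\varphi$ is smooth on $U$. Because $e_s$ is a zero of $\nabla K$, $F\triangleq 4e^{-2\varphi}K_{\bar z}$ is a holomorphic function with $F(0)=0$, so one may write $\frac{1}{F}=\frac{\Phi(z)}{z^{k}}$ with $\Phi$ holomorphic and $\Phi(0)=\lambda_{-k}\neq 0$, and $\omega=\frac{\lambda_{-1}}{z}dz+df_{1}$ with $f_{1}=\frac{f_{2}}{z^{k-1}}$, $f_{2}(0)\neq 0$. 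Substituting this expansion and $-\frac{K^{3}}{3}+CK+C'=-\frac{1}{3}(K-\mu)^{2}(K+2\mu)$ (Theorem \ref{cuspth1}) into (\ref{dKequ}) and integrating, one argues just as in the proof of Lemma \ref{boundlemma1} that $\lambda_{-1}\in\mathds{R}$ and obtains on $U\setminus\{e_s\}$ the relation
\[
(\ast)\qquad -\frac{1}{3\mu^{2}}\Bigl[\ln(-2\mu-K)-\ln(K-\mu)\Bigr]+\frac{1}{\mu(K-\mu)}=\lambda_{-1}\ln|z|^{2}+2\mathrm{Re}(f_{1})+c
\]
for some real constant $c$.

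Next I would exploit the smoothness of $g$ at $e_s$. Formula (\ref{gequ}) gives $e^{2\varphi}=\frac{4}{3}(K-\mu)^{2}(-2\mu-K)|\Phi|^{2}/|z|^{2k}$, so the smooth, strictly positive value of $e^{2\varphi}$ at $0$ forces $(K-\mu)^{2}(-2\mu-K)/|z|^{2k}$ to converge uniformly in direction to a positive real number. Combined with Theorem \ref{boundth}, this already restricts $K(e_s)\in\{\mu,-2\mu\}$. To rule out $K(e_s)=\mu$, multiply $(\ast)$ by $K-\mu$ and let $z\to 0$: on the left-hand side the term $(K-\mu)\cdot\frac{1}{\mu(K-\mu)}=\frac{1}{\mu}$ survives while every other term vanishes (since $K-\mu\sim C'|z|^{k}$ annihilates $x\ln x$ and the bounded term $\ln(-2\mu-K)$), whereas the right-hand side tends to $0$ (because $(K-\mu)\ln|z|^{2}=O(|z|^{k}\ln|z|)\to 0$ and $(K-\mu)\cdot 2\mathrm{Re}(f_{1})=O(|z|^{k}\cdot|z|^{-(k-1)})=O(|z|)\to 0$). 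The resulting contradiction $\frac{1}{\mu}=0$ forces $K(e_s)=-2\mu$, after which the smoothness of $e^{2\varphi}$ yields $-2\mu-K\sim D|z|^{2k}$ for some $D>0$.

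It remains to show $k=1$ and to identify the residue. To prove $k=1$, multiply $(\ast)$ by $|z|^{k-1}$ and let $z\to 0$ along the ray $z=re^{\sqrt{-1}\theta_{0}}$ for an arbitrary $\theta_{0}$. For $k>1$ the left-hand side tends to $0$, since every term is $O(\ln|z|)$ or bounded. On the right-hand side, the dominant contribution is $|z|^{k-1}\cdot 2\mathrm{Re}\bigl(\frac{f_{2}}{z^{k-1}}\bigr)=2\mathrm{Re}\bigl(f_{2}(z)\,e^{-\sqrt{-1}(k-1)\theta}\bigr)$, whose limit $2\mathrm{Re}\bigl(f_{2}(0)e^{-\sqrt{-1}(k-1)\theta_{0}}\bigr)$ depends non-trivially on $\theta_{0}$ and cannot vanish for every $\theta_{0}$ unless $f_{2}(0)=0$, contradicting $f_{2}(0)\neq 0$. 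Hence $k=1$ and $e_s$ is a simple pole of $\omega$. Finally, dividing $(\ast)$ by $\ln|z|$ and sending $z\to 0$, the only surviving term on the left is $-\frac{1}{3\mu^{2}}\cdot\frac{\ln(-2\mu-K)}{\ln|z|}\to -\frac{2}{3\mu^{2}}$ (using $-2\mu-K\sim D|z|^{2}$), while the right-hand side tends to $2\lambda_{-1}$, giving $\mathrm{Res}_{e_s}(\omega)=\lambda_{-1}=-\frac{1}{3\mu^{2}}$. The main obstacle is the direction-dependent asymptotic bookkeeping needed to exclude $k>1$ cleanly; once that is set up, the rest is parallel to the arguments already established in Lemmas \ref{boundlemma1}--\ref{boundlemma3}.
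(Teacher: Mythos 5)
Your proposal is correct and follows essentially the same route as the paper's proof: the same expansion of $\omega$ at $e_s$, the same integrated identity, the same dichotomy $K(e_s)\in\{\mu,-2\mu\}$ with $K(e_s)=\mu$ ruled out by a limiting argument, the same direction-dependence contradiction to force $k=1$, and the same division by $\ln|z|$ to extract the residue $-\frac{1}{3\mu^{2}}$. Your two cosmetic variations (multiplying by $K-\mu$ instead of $|z|^{k}$, and multiplying by $|z|^{k-1}$ with ray limits instead of by $z^{k-1}$ to contradict a limit of $z^{k-1}/\bar z^{k-1}$) are equivalent to the paper's steps, the latter being exactly the angle argument the paper itself uses in Lemma \ref{boundlemma1}.
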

\begin{proof}
 Suppose that $\mathfrak{S}\neq \varnothing$. Pick any $e_s$ and let $(U,z)$ be a \lccc around $e_s$
such that $U \setminus \{e_s\} \subset \Sigma'$, $z(U)$ is a disk $D$ and $z(e_s)=0$.
Suppose $g=e^{2\varphi}|dz|^2$ on $U$.
  Then $F=4e^{-2\varphi}K_{\bar{z}}$ is a holomorphic function on $D$, $0$ is the unique zero
  of $F$ on $D$ and $\omega=\cfrac{dz}{F}$ on $U \setminus \{e_s\}$. Suppose $\cfrac{1}{F}$
  has the following expression on $D \setminus \{0\}$:
  \BE\label{laurentexpr7}
   \frac{1}{F}=\frac{\lambda_{-k}}{z^k}+\cdots+\frac{\lambda_{-2}}{z^2}+\frac{\lambda_{-1}}{z}+
   \sum_{m=0}^\infty \lambda_m z^m=\frac{\Phi(z)}{z^k},
  \EE
  where $\Phi(z)$ is a holomorphic function on $D$ with $\Phi(0)=\lambda_{-k}\neq 0$. Then
  \BE\label{omegaexpr7}
   \omega=\frac{\lambda_{-1}}{z}dz+df_1,
  \EE
  where $f_1=\cfrac{f_2}{z^{k-1}}$ and $f_2$ is a holomorphic function on $D$ with $f_2(0) \neq 0$. \par
  Then first on $D \setminus \{0\}$
 \bey\label{IKequss}
 \nn (-\frac{1}{3\mu^2})[\ln(-2\mu-K)-\ln(K-\mu)-\frac{3\mu}{K-\mu}]= \\
 \lambda_{-1}\ln |z|^2+2Re(f_1)+c,
 \eey
 where $\lambda_{-1},~c \in \mathds{R}$. \par
On the other hand, on $D \setminus \{0\}$
 \[ e^{2\varphi}=-\frac{4}{3}(K-\mu)^2(K+2\mu)\frac{|\Phi|^2}{|z|^{2k}}.
 \]
 Then
\[ \lim_{z \rightarrow 0}(K-\mu)^2(-2\mu-K)=0.
\]
Since $K$ is continuous at $e_s$, $K(e_s)=\mu$ or $K(e_s)=-2\mu$. If
$K(e_s)=\mu$, then
\[ \lim_{z \rightarrow 0} \frac{K-\mu}{|z|^k}=A_1,~A_1>0.
\]
Multiply both sides of (\ref{IKequss}) by $|z|^k$, let $z \rightarrow 0$ and take limits.
The limit of the left side is a nonzero real number. The limit of
the right side is $0$. It is a contradiction. Therefore $K(e_s)=-2\mu$ and
\[ \lim_{z \rightarrow 0} \frac{-2\mu-K}{|z|^{2k}}=A_2,~A_2>0.
\]
Then
\BE\label{limss}
 \lim_{z \rightarrow 0} \frac{\ln (-2\mu-K)}{\ln |z|}=2k.
\EE
 \par
If $k>1$, multiply both sides of (\ref{IKequss}) by $z^{k-1}$ to get
\bey\label{IKequss'}
\nonumber z^{k-1}(-\frac{1}{3\mu^2})[\ln(-2\mu-K)-\ln(K-\mu)-\frac{3\mu}{K-\mu}]=\\
z^{k-1}\lambda_{-1}\ln|z|^2+f_2+\overline{f_2}\frac{z^{k-1}}{\bar{z}^{k-1}}+cz^{k-1}.
\eey
Then let $z \rightarrow 0$ on both sides of (\ref{IKequss'}) and take limits. By (\ref{limss}), we get
\[ \lim_{z \rightarrow 0}\frac{z^{k-1}}{\bar{z}^{k-1}}=A_3,~A_3\neq 0.
\]
It is impossible. Hence $k=1$, that is, $e_s$ is a simple pole of $\omega$. Then divide both
sides of (\ref{IKequss}) by $\ln |z|$, let $z \rightarrow 0$ and take limits to get
\[ -\frac{1}{3\mu^2}\lim_{z \rightarrow 0}\frac{\ln(-2\mu-K)}{\ln |z|}=2\lambda_{-1}.
\]
By (\ref{limss}), $\lambda_{-1}=Res_{e_s}(\omega)=-\cfrac{1}{3\mu^2}$. Therefore we
prove the theorem.
\end{proof}

Next we will consider the conical singularities of $g$, $q_1,q_2,\cdots,q_J$. First we get the following
result: if the singular angle of $g$ at $q_j$ is $2\pi$, then $q_j$ is a regular point of $g$.
Before the result, we will give a lemma:
\begin{lemma}\label{analysislemma}
Suppose that $\Omega$ is a domain in $\mathbb{R}^N$ and $0 \in \Omega$. Let $f$ be a continuous function
on $\Omega$ and $f|_{\Omega \setminus \{0\}} \in C^1(\Omega\setminus \{0\})$. If there exist
$g_1,g_2,\cdots,g_N
\in C^0(\Omega)$ such that $\cfrac{\partial f }{\partial x_\nu}=g_\nu$ holds on $\Omega \setminus
\{0\},~\forall \nu,~\nu=1,2,\cdots,N$, then $f \in C^1(\Omega)$.
\end{lemma}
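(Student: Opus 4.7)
The plan is to show that the partial derivatives of $f$ at the origin exist and equal $g_\nu(0)$; once this is established, the continuity of each $g_\nu$ on $\Omega$ combined with the standard criterion (continuous partials imply $C^1$) will immediately yield $f \in C^1(\Omega)$.

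First I would fix an index $\nu \in \{1, 2, \ldots, N\}$ and pick $\delta > 0$ small enough that the segment $\{t e_\nu : |t| \le \delta\}$ lies entirely in $\Omega$, where $e_\nu$ denotes the $\nu$-th standard basis vector. Define $\phi(t) = f(t e_\nu)$ on $[-\delta, \delta]$. By the continuity of $f$ on $\Omega$, $\phi$ is continuous on $[-\delta, \delta]$; by the hypothesis, $\phi$ is differentiable on $[-\delta, \delta] \setminus \{0\}$ with derivative $\phi'(t) = g_\nu(t e_\nu)$.

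Next, for any $h$ with $0 < |h| \le \delta$, I would apply the classical one-variable mean value theorem to $\phi$ on the interval with endpoints $0$ and $h$. This is legitimate even though $\phi'(0)$ is not known a priori to exist, because $\phi$ is continuous on the closed interval and differentiable on the open interval. The MVT then produces a point $\xi_h$ strictly between $0$ and $h$ such that
\[
\frac{f(h e_\nu) - f(0)}{h} \;=\; \phi'(\xi_h) \;=\; g_\nu(\xi_h e_\nu).
\]
Letting $h \to 0$, we have $\xi_h e_\nu \to 0$, and the continuity of $g_\nu$ at $0$ gives $g_\nu(\xi_h e_\nu) \to g_\nu(0)$. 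Therefore $\dfrac{\partial f}{\partial x_\nu}(0)$ exists and equals $g_\nu(0)$.

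Combining this with the assumption that $\dfrac{\partial f}{\partial x_\nu} = g_\nu$ on $\Omega \setminus \{0\}$, we conclude that $\dfrac{\partial f}{\partial x_\nu}$ exists on all of $\Omega$ and agrees with $g_\nu$, which is continuous. Since every partial derivative of $f$ is continuous on $\Omega$, the standard theorem of multivariable calculus gives $f \in C^1(\Omega)$. The only delicate point is handling the single bad point $x = 0$, and that is resolved entirely by the mean value theorem argument above — all other aspects are routine.
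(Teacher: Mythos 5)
Your proposal is correct and takes essentially the same approach as the paper: both reduce the problem to showing that the partial derivative of $f$ at the origin along each coordinate axis exists and equals $g_\nu(0)$, then conclude $f \in C^1(\Omega)$ from the continuity of the $g_\nu$. The only difference is the one-variable tool used at the origin — you apply the mean value theorem to the restriction $\phi(t)=f(te_\nu)$, while the paper integrates $g_\nu$ along the axis (Newton--Leibniz) and passes to the limit of the difference quotient; these are interchangeable.
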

\begin{proof}
 In fact we only need to prove that $f$ has partial derivatives at $0$ and $\cfrac{\partial f}
{\partial x_\nu}(0)=g_\nu(0), \forall \nu,~\nu=1,2,\cdots,N.$ \par
Pick any $\nu$. Suppose that $(\underbrace{0,\cdots,
t , \cdots,0}_\nu) \in \Omega$ as $t \in [-\Delta,\Delta](\Delta>0)$. Define $\Gamma(t)=
f(\underbrace{0,\cdots, t , \cdots,0}_\nu),~t\in [0,\Delta]$. Then $\Gamma(t)$ is continuous on
$[0,\Delta]$ and $\Gamma'(t)=g_\nu(\underbrace{0,\cdots, t , \cdots,0}_\nu), \forall t \in (0,\Delta)$.
Then by Newton-Leibnitz formula
\[ \int_0^t g_\nu(\underbrace{0,\cdots, \tau , \cdots,0}_\nu)d\tau=\Gamma(t)-\Gamma(0),~0<t<\Delta.
\]
Hence
\[ \lim_{t \rightarrow 0^+}\frac{\Gamma(t)-\Gamma(0)}{t}=\lim_{t \rightarrow
0^+} \frac{f(\underbrace{0,\cdots, t , \cdots,0}_\nu)-f(0)}{t}=g_\nu(0).
\]
Similarly,
\[ \lim_{t \rightarrow 0^-} \frac{f(\underbrace{0,\cdots, t , \cdots,0}_\nu)-f(0)}{t}=
g_\nu(0).
\]
Therefore
\[ \frac{\partial f}{\partial x_\nu}(0)=g_\nu(0).
\]
Then we prove the lemma.
\end{proof}
\begin{proposition}\label{regprop}
If the singular angle of $g$ at $q_j$ is $2\pi$, then $q_j$ is a regular point of $g$.
\end{proposition}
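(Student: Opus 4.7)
The plan is to work in a local complex coordinate chart $(U,z)$ around $q_j$ with $z(q_j)=0$ and $U\setminus\{q_j\}\subset\Sigma'$, and to write $g=e^{2\varphi}|dz|^2$. Since $\alpha_j=1$, the function $e^{2\varphi}$ is positive and continuous on all of $U$, so the core task is to promote this continuity to full smoothness at $q_j$. In parallel with the proofs of Lemma \ref{boundlemma3} and Theorem \ref{sth}, I would let $k$ denote the order of pole of $\omega$ at $q_j$, expand $\omega=\frac{\lambda_{-1}}{z}dz+df_1$ with $f_1=f_2/z^{k-1}$ and $f_2$ holomorphic with $f_2(0)\neq 0$, and record the two key identities on $U\setminus\{q_j\}$: the metric formula $e^{2\varphi}|z|^{2k}=-\frac{4}{3}(K-\mu)^2(K+2\mu)|\Phi|^2$ (with $\Phi$ holomorphic and $\Phi(0)\neq 0$), and the integrated ODE $-\frac{1}{3\mu^2}[\ln(-2\mu-K)-\ln(K-\mu)-\frac{3\mu}{K-\mu}]=\lambda_{-1}\ln|z|^2+2Re(f_1)+c$ with $\lambda_{-1},c\in\mathds{R}$.

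The first step is to determine $k$ and the limit of $K$ at $q_j$. Boundedness and positivity of $e^{2\varphi}$ at $0$, combined with the metric formula, show that if $k\geq 1$ then $(K-\mu)^2(-2\mu-K)\to 0$, so $K(q_j)=\mu$ or $K(q_j)=-2\mu$. The subcase $K(q_j)=\mu$ with $k\geq 1$ is excluded because the term $\frac{1}{\mu(K-\mu)}$ on the left-hand side of the integrated ODE then blows up like $|z|^{-k}$, growing faster than every term on the right-hand side; the resulting contradiction follows from the same multiplication-and-angular-limit argument used in Lemma \ref{boundlemma3}. The subcase $K(q_j)=-2\mu$ with $k>1$ is excluded by the $z^{k-1}/\bar{z}^{k-1}$ angular contradiction used in Theorem \ref{sth}. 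One is therefore left with either $k=0$ (so $\omega$ is holomorphic at $q_j$ and $K$ extends continuously into the open interval $(\mu,-2\mu)$), or $k=1$ with $K(q_j)=-2\mu$ and $Res_{q_j}(\omega)=-\frac{1}{3\mu^2}$, which matches the local picture at a point of $\mathfrak{S}$.

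In either surviving case, continuity of $g$ across $q_j$ is immediate from $g=-\frac{4}{3}(K-\mu)^2(K+2\mu)\omega\bar\omega$: in the first case $K$ extends continuously and $\omega$ is holomorphic; in the second the local data for $(K,\omega)$ is identical to that near an $e_s\in\mathfrak{S}$, where $g$ is already smooth by definition. The main obstacle is upgrading continuity to full smoothness of $\varphi$ at $0$, and the plan is to invoke Lemma \ref{analysislemma}: differentiating the formulas expressing $K$ and $\varphi$ in terms of $z$ on $U\setminus\{q_j\}$ and verifying that the partial derivatives extend continuously across $0$ yields $\varphi\in C^1(U)$ via that lemma; iterating the differentiate-and-extend procedure, or equivalently bootstrapping with the HCMU equation $K_{,zz}=0$, then produces $\varphi\in C^\infty(U)$, establishing that $q_j$ is a regular point of $g$.
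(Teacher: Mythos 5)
Your outline does follow the paper's own route almost step for step: the same local expansions, the same case analysis on the pole order $k$ and on the limiting value of $K$, the same $|z|$-power multiplication and $z^{k-1}/\bar{z}^{k-1}$ angular-limit exclusions, and the same final mechanism (Lemma \ref{analysislemma} plus bootstrap). However, there is a concrete gap at the top of your case analysis: you never rule out the possibility that $\omega$ has a \emph{zero} at $q_j$. Your dichotomy ``either $k=0$ or $k=1$'' is obtained only by eliminating subcases with $k\geq 1$; a zero of $\omega$ (negative order, in your convention) is never touched, and your ``$k=0$'' conclusion tacitly assumes $\omega(q_j)\neq 0$. This case cannot be absorbed silently: if $\omega$ vanished at $q_j$ to order $\gamma\geq 1$, then $K$ would still extend smoothly with value in $(\mu,-2\mu)$ (via the bijectivity of $\sigma$), and $g=-\frac{4}{3}(K-\mu)^2(K+2\mu)\omega\bar{\omega}$ would extend continuously --- but with $g$ degenerating like $|z|^{2\gamma}$ at $q_j$, i.e.\ a conical point of angle $2\pi(\gamma+1)$, not a regular point, so the proposition's conclusion would be false along that branch. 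The exclusion must invoke the angle-$2\pi$ hypothesis again, and the paper does so in one line before splitting into cases: by Theorem \ref{boundth} the factor $-(K-\mu)^2(K+2\mu)$ is bounded on $D\setminus\{0\}$, so by (\ref{gequ}) a zero of $\omega$ would force $h\to 0$ at $q_j$, contradicting the positivity and continuity of $h$ there.

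Two smaller glosses are worth flagging. First, ``so $K(q_j)=\mu$ or $K(q_j)=-2\mu$'' needs an argument that $\lim_{z\to 0}K$ exists at all: $(K-\mu)^2(-2\mu-K)\to 0$ only says $K$ clusters on $\{\mu,-2\mu\}$, and the paper rules out oscillation between the two values by an intermediate-value argument (the sequences $x_\ell$, $x_\ell'$, $y_\ell$ with $K(y_\ell)=\hat{\mu}$). Second, in the $k=1$ case the phrase ``verifying that the partial derivatives extend continuously'' hides the actual engine of the proof: the raw formula $h=-\frac{4}{3}(K-\mu)^2(K+2\mu)|\Phi|^2/|z|^{2}$ has individually singular terms when differentiated, so one must first exponentiate the integrated ODE to obtain $-2\mu-K=|z|^2(K-\mu)e^{\frac{3\mu}{K-\mu}}f_3$, hence $h=(K-\mu)^3e^{\frac{3\mu}{K-\mu}}f_4$ with $f_4$ smooth and positive; only this rewriting makes the coupled system (\ref{sys1}) amenable to Lemma \ref{analysislemma} and to the iteration that yields $K\in C^{\infty}(D)$ and then $h\in C^{\infty}(D)$. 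Your plan names the right tools, but without these two steps (and the zero exclusion above) it does not yet close.
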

\begin{proof}
Let $(U,z)$ be a local complex coordinate chart around $q_j$ such that $U \setminus
\{q_j\} \subset \Sigma'$, $z(U)$ is a disk $D$ and $z(q_j)=0$. Since $q_j$ is a conical
singularity of $g$ with the angle $2\pi$, we suppose $g=h |dz|^2$ on $U \setminus
\{q_j\}$, where $h$ is a positive continuous function on $U$ and is smooth on $U
\setminus \{q_j\}$. Then $F=\cfrac{4K_{\bar{z}}}{h}$ is a nonvanishing holomorphic
function on $U \setminus \{q_j\}$ and $\omega=\cfrac{dz}{F}$ on $U \setminus \{q_j\}$.
By (\ref{gequ}), on $D \setminus \{0\}$
\[h=-\cfrac{4}{3}(K-\mu)^2(K+2\mu)\frac{1}{|F|^2}.
\]
 By Theorem \ref{boundth}, $-(K-\mu)^2(K+2\mu)$ is bounded
on $D \setminus \{0\}$,
so $q_j$ is not a zero of $\omega$. \par
If $q_j$ is a pole of $\omega$, suppose $\cfrac{1}{F}$ has the following expression on
$D \setminus \{0\}$,
\[ \frac{1}{F}=\frac{\lambda_{-k}}{z^k}+\cdots+\frac{\lambda_{-2}}{z^2}+\frac{\lambda_{-1}}{z}+
   \sum_{m=0}^\infty \lambda_m z^m=\frac{\Phi(z)}{z^k},
\]
where $\Phi(z)$ is a holomorphic function on $D$ with $\Phi(0)=\lambda_{-k}\neq 0$. Therefore
\[
\omega=\frac{\lambda_{-1}}{z}dz+df_1,
\]
where $f_1=\cfrac{f_2}{z^{k-1}}$ and $f_2$ is a holomorphic function on $D$ with $f_2(0) \neq 0$.
Then we get on $D \setminus \{0\}$
\bey\label{IKequc1}
 \nn (-\frac{1}{3\mu^2})[\ln(-2\mu-K)-\ln(K-\mu)-\frac{3\mu}{K-\mu}]=\\
\lambda_{-1}\ln |z|^2+2Re(f_1)+c_1,
\eey
where $\lambda_{-1},~c_1\in \mathds{R}$. On the other hand, on $D \setminus \{0\}$
\[ h=-\frac{4}{3}(K-\mu)^2(K+2\mu)\frac{|\Phi|^2}{|z|^{2k}},
\]
so
\[ \lim_{z \rightarrow 0}(-2\mu-K)(K-\mu)^2=0.
\]
If $\dps{\limsup_{z \rightarrow 0}}K=-2\mu$ and $\dps{\liminf_{z \rightarrow 0}K=\mu}$, then
there exist two sequences $\{x_{\ell}\}$, $\{x_{\ell}'\} \subset D \setminus \{0\}$ such that
$x_{\ell} \rightarrow 0$, $x_{\ell}' \rightarrow 0$ as $\ell \rightarrow \infty$ and $\dps{\lim_{\ell \rightarrow
\infty}}K(x_{\ell})=-2\mu$,
$\dps{\lim_{\ell \rightarrow \infty}K(x_{\ell}')}=\mu$. Pick any $\hat{\mu}$ such that $\mu < \hat{\mu}< -2\mu$.
As $\ell$ is big enough, $K(x_{\ell})>\hat{\mu}$ and $K(x_{\ell}')<\hat{\mu}$. Then there exists $y_{\ell}$
which satisfies $\min \{|x_{\ell}|, |x_{\ell}'|\} \leq |y_{\ell}| \leq \max \{|x_\ell|, |x_{\ell}'|\}$ such that
$K(y_\ell)=\hat{\mu}$.
Obviously $y_\ell \rightarrow 0$ as $\ell \rightarrow \infty$, so
\[ \lim_{\ell \rightarrow \infty} (-2\mu-K(y_\ell))(K(y_\ell)-\mu)^2=0,
\]
which means $(-2\mu-\hat{\mu})(\hat{\mu}-\mu)^2=0$. It is impossible. Therefore
$\dps{\lim_{z \rightarrow 0}K=-2\mu}$ or $\dps{\lim_{z \rightarrow 0}K=\mu}$. If $\dps
{\lim_{z \rightarrow 0}}K=\mu$, we can use the same method in the proof of Theorem \ref{sth} to
get a contradiction. Then $\dps{\lim_{z \rightarrow 0}K=-2\mu}$, which means $K$ is continuous
on $D$. Further
\[ \lim_{z \rightarrow 0} \frac{-2\mu-K}{|z|^{2k}}=A,~A>0,
\]
which implies
\[ \lim_{z \rightarrow 0}\frac{\ln(-2\mu-K)}{\ln |z|}=2k.
\]
If $k>1$, we can also use the same method in the proof of Theorem \ref{sth} to
get a contradiction. Hence $k=1$. Then divide both sides of (\ref{IKequc1}) by $\ln |z|$, let
$z \rightarrow 0$ and take limits to get
$ \lambda_{-1}=\cfrac{-1}{3\mu^2}.
$
Next by (\ref{IKequc1}), on $D \setminus \{0\}$
\[ -2\mu-K=|z|^2 (K-\mu)e^{\frac{3\mu}{K-\mu}}f_3,
\]
where $f_3$ is a positive smooth function on $D$. Therefore on $D \setminus \{0\}$
\beyy
h &=&\frac{4}{3}(K-\mu)^2|z|^2(K-\mu)e^{\frac{3\mu}{K-\mu}}f_3 \frac{|\Phi|^2}{|z|^2}\\
  &=&(K-\mu)^3 e^{\frac{3\mu}{K-\mu}}f_4,
\eeyy
where $f_4$ is a positive smooth function on $D$. Then consider the following system of equations on
$D \setminus \{0\}$:
\BE\label{sys1}
\begin{cases}
 K_{\bar{z}}=\frac{1}{4}F h, \\
h=(K-\mu)^3 e^{\frac{3\mu}{K-\mu}}f_4.
\end{cases}
\EE
By Lemma \ref{analysislemma} and the first equation of (\ref{sys1}),
$K \in C^1(D)$. Then using the bootstrap technique to (\ref{sys1}),
we get $K \in C^{\infty}(D)$. Finally by the second equation of (\ref{sys1}), $h
\in C^{\infty}(D)$, which means $q_j$ is a regular point of $g$.  \par
If $q_j$ is a regular point of $\omega$(neither a pole nor a zero of $\omega$), then $\omega=
f_5'(z)dz=d f_5$, where $f_5$ is a holomorphic function on $D$. Therefore on $D \setminus \{0\}$
\BE\label{zeroequ}
 -\frac{1}{3\mu^2}[\ln(-2\mu-K)-\ln(K-\mu)-\frac{3\mu}{K-\mu}]=f_5+\overline{f_5}+c_2,
\EE
where $c_2 \in \mathds{R}$. Let
\[\sigma(t)=\ln(-2\mu-t)-\ln(t-\mu)-\frac{3\mu}{t-\mu}, ~t \in (\mu,-2\mu).
\]
Then $\sigma(t)$ has the following property: $\forall x \in \mathds{R}, \exists!~t \in (\mu,
-2\mu),~s.t.~\sigma(t)=x$. The reasons for the existence are the continuity of $\sigma(t)$ and
\[ \lim_{t \rightarrow (-2\mu)^-} \sigma(t)=-\infty~~\text{,}~~
\lim_{t \rightarrow \mu^+} \sigma(t)=+\infty.
\]
The reason for the uniqueness is since
\[ \sigma'(t)=\frac{1}{t+2\mu}-\frac{1}{t-\mu}+\frac{3\mu}{(t-\mu)^2}=\frac{9\mu^2}{(t-\mu)^2
(t+2\mu)},
\]
$\sigma'(t)\neq 0$, $\forall t \in (\mu,-2\mu)$. By the property of $\sigma(t)$, we can define
a function $T$ on $D$ such that $\mu < T< -2\mu$ and $\sigma(T)=(-3\mu^2)(f_5+\overline{f_5}
+c_2)$. Then by the implicit theorem, $T \in C^\infty(D)$. By (\ref{zeroequ}), $K=T$ on
$D \setminus \{0\}$, so $K$ can be smoothly extended to $q_j$. By (\ref{gequ}), on $D \setminus \{0\}$
\[
h=-\frac{4}{3}(K-\mu)^2(K+2\mu)|f_5'(z)|^2.
\]
Since $K$ is smooth on $D$ and $q_j$ is a regular point of $\omega$, $h$ is smooth on
$D$, which also means $q_j$ is a regular point of $g$. Then we prove the proposition.
\end{proof}
\begin{remark}
 By Proposition \ref{regprop}, we suppose that the singular angle of $g$ at each $q_j$, $j=1,2,\cdots,J$,
 is not $2\pi$. That is why we suppose $\alpha_j \neq 1$, $j=1,2,\cdots,J$, in Theorem \ref{mainth}.
\end{remark}
Next we will get the following theorem for the conical singularities $q_1,q_2,\cdots,q_J$:
\begin{theorem}\label{conicalth}
Each $q_j$, $j=1,2,\cdots,J$, is a pole or a zero of $\omega$. If $q_j$ is a zero of $\omega$,
then $\alpha_j$ must be an integer, the order of $\omega$ at $q_j$ is $\alpha_j-1$, $K$ can
be smoothly extended to $q_j$ with $\mu<K(q_j)<-2\mu$ and $dK(q_j)=0$. If $q_j$ is a pole of $\omega$,
then $q_j$ is a simple pole of $\omega$, the residue of $\omega$ at $q_j$ is $-\cfrac{\alpha_j}{3\mu^2}$
and $K$ can be continuously extended to $q_j$ with $K(q_j)=-2\mu$.
\end{theorem}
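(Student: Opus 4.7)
\emph{Proof proposal.} Having established in Theorem \ref{cuspth1} that $-\tfrac{1}{3}K^3+CK+C' = -\tfrac{1}{3}(K-\mu)^2(K+2\mu)$ and in Theorem \ref{boundth} that $\mu<K<-2\mu$ on $\Sigma'$, what remains at each conical singularity $q_j$ is to read off the local behaviour of $\omega$, $K$, and $g$ from the integrated form of (\ref{dKequ}). By Proposition \ref{propertiesofomega}(1), $q_j$ is either a regular point or a pole of $\omega$; I split the analysis into three subcases according to whether $\omega$ is regular and nonvanishing at $q_j$, has a zero at $q_j$, or has a pole at $q_j$. The plan is to rule out the first subcase, derive the integrality statement from the second, and compute the simple-pole residue in the third.

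In the regular-nonvanishing subcase I copy the end of the proof of Proposition \ref{regprop}: write $\omega = df_5$ with $f_5$ holomorphic near $q_j$, integrate (\ref{dKequ}) to get $-\tfrac{1}{3\mu^2}\sigma(K) = f_5+\overline{f_5}+c$, where $\sigma(t)=\ln(-2\mu-t)-\ln(t-\mu)-\tfrac{3\mu}{t-\mu}$ is the smooth bijection $(\mu,-2\mu)\to\mathbb{R}$ exhibited in that proposition, and deduce a smooth extension of $K$ and, via (\ref{gequ}), a smooth positive extension of the conformal factor. This forces $q_j$ to be a regular point of $g$, contradicting $\alpha_j\neq 1$, so the subcase is excluded. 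If instead $\omega$ has a zero of order $m\geq 1$ at $q_j$, the same identity $-\tfrac{1}{3\mu^2}\sigma(K)=f_5+\overline{f_5}+c$ still holds with $f_5$ holomorphic and bounded, so monotonicity of $\sigma$ again yields a smooth extension of $K$ with $\mu<K(q_j)<-2\mu$. The identity $dK = -\tfrac{1}{3}(K-\mu)^2(K+2\mu)(\omega+\bar\omega)$ from Proposition \ref{propertiesofomega}(2), together with boundedness of the cubic factor and vanishing of $\omega$ at $q_j$, forces $dK(q_j)=0$. Comparing (\ref{gequ}), which near $q_j$ gives $g\sim C|z|^{2m}|dz|^2$ with $C>0$, to the conical form $g=h|z|^{2\alpha_j-2}|dz|^2$ with $h$ positive continuous at $0$, forces $\alpha_j=m+1$, so $\alpha_j$ is an integer and the order of $\omega$ at $q_j$ is exactly $\alpha_j-1$.

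The remaining subcase, where $q_j$ is a pole of order $k$ of $\omega$, is the most delicate and occupies the bulk of the work. Writing $1/F = \Phi(z)/z^k$ and $\omega = \tfrac{\lambda_{-1}}{z}dz + df_1$ as in the proofs of Lemma \ref{boundlemma1} and Theorem \ref{sth}, substituting into (\ref{dKequ}) and integrating, I arrive at
$-\tfrac{1}{3\mu^2}[\ln(-2\mu-K)-\ln(K-\mu)-\tfrac{3\mu}{K-\mu}] = \lambda_{-1}\ln|z|^2 + 2\,\mathrm{Re}(f_1) + c,$
with $\lambda_{-1}\in\mathbb{R}$. Coupling this with the conical asymptotic $\varphi = (\alpha_j-1)\ln|z| + \tfrac{1}{2}\ln h$ and with (\ref{gequ}), I rule out $\liminf_{z\to 0}K=\mu$ by multiplying by an appropriate positive power of $|z|$ and comparing limits (as in Theorem \ref{sth}), concluding $K(q_j)=-2\mu$ and $\lim_{z\to 0}\tfrac{\ln(-2\mu-K)}{\ln|z|}=2(k+\alpha_j-1)$. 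I then exclude $k\geq 2$ by multiplying the integrated equation by $z^{k-1}$ and extracting the forbidden limit of $z^{k-1}/\bar z^{k-1}$, and with $k=1$ in hand I divide by $\ln|z|$ and take the limit to read off $\mathrm{Res}_{q_j}(\omega)=\lambda_{-1}=-\alpha_j/(3\mu^2)$.

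The step I expect to be the main obstacle is precisely this residue identification: the exponent $2(k+\alpha_j-1)$ must be tracked correctly through the conical $\varphi$-expansion, since this is what upgrades the universal residue $-1/(3\mu^2)$ at smooth zeros of $\nabla K$ (Theorem \ref{sth}) to the angle-weighted residue $-\alpha_j/(3\mu^2)$ at a conical pole. Once this bookkeeping is done, the rest of the argument is entirely parallel to the cusp and $\mathfrak{S}$-point analyses already carried out in Theorems \ref{cuspth2} and \ref{sth}.
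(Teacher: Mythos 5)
Your proposal is correct and follows essentially the same route as the paper's proof: the same three-way case split at $q_j$, the same $\sigma$-based smooth-extension argument to eliminate the regular-nonvanishing case and to handle zeros (with the exponent comparison giving $\alpha_j=m+1$), and the same pole analysis (integrated identity, conical comparison forcing $K\to-2\mu$ with exponent $2(k+\alpha_j-1)$, exclusion of $k>1$ via the limit of $z^{k-1}/\bar{z}^{k-1}$, and the residue read off by dividing by $\ln|z|$). The one detail you gloss over --- that $\lim_{z\to 0}K$ actually exists before one can rule out the value $\mu$, which the paper obtains from the intermediate-value argument in Proposition \ref{regprop} --- is minor and is covered by the arguments you cite.
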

\begin{proof}
Suppose that $q_j$ is a regular point of $\omega$. Let $(W,\xi)$ be a \lccc around $q_j$
such that $W\setminus \{q_j\} \subset \Sigma'$, $\xi(W)$ is a disk $\widetilde{D}$ and $\xi(q_j)=0$
. Assume $\omega=\rho(\xi)d\xi$ on $W$, where
$\rho(\xi)$ is a holomorphic function on $\widetilde{D}$ with $\rho(0)\neq 0$. Then there exists a
positive continuous function $\tilde{h}$ on $\widetilde{D}$ such that on $\widetilde{D} \setminus \{0\}$
\BE\label{noreg}
\frac{\tilde{h}}{|\xi|^{2-2\alpha_j}}=-\frac{4}{3}(K-\mu)^2(K+2\mu)|\rho(\xi)|^2.
\EE
Then we can also use the argument in the proof of Proposition \ref{regprop} to get $K$ can be
smoothly extended to $q_j$ with $\mu<K(q_j)<-2\mu$. If $\alpha_j<1$, let $\xi\rightarrow 0$ and
take limits on both sides of (\ref{noreg}). The limit of the left side is $+\infty$ and
the limit of the right side is a nonzero real number. It is a contradiction. If $\alpha_j>1$,
let $\xi\rightarrow 0$ and take limits on both sides of (\ref{noreg}). The limit of the left side is
$0$ and the limit of the right side is a nonzero real number. It is also a contradiction.
Hence $q_j$ is not a regular point of $\omega$. \par
Suppose $q_j$ is a zero of $\omega$. Let $(Y,\zeta)$ be a \lccc around $q_j$ such that $Y \setminus \{q_j\}
\subset \Sigma'$, $\zeta(Y)$ is a disk $\widehat{D}$ and $\zeta(q_j)=0$.
Assume on $Y$
\[\omega=\zeta^\gamma H_1 d\zeta=dH_2,
\]
where $\gamma$ is the order of $\omega$ at $q_j$, $H_1$ is a holomorphic function on $\widehat{D}$ with
$H_1(0)\neq 0$ and $H_2$ is a holomorphic function on $\widehat{D}$. Then also by the argument in the proof
of Proposition \ref{regprop}, we have $K$ can be smoothly extended to $q_j$ with $\mu<K(q_j)<-2\mu$.
By (\ref{dKequ}), $dK(q_j)=0$. By (\ref{gequ}), there exists a positive continuous function
$\hat{h}$ on $\widehat{D}$ such that on $\widehat{D} \setminus \{0\}$
\[\frac{\hat{h}}{|\zeta|^{2-2\alpha_j}}=-\frac{4}{3}(K-\mu)^2(K+2\mu)|\zeta|^{2\gamma}
|H_1|^2.
\]
Therefore $\gamma=\alpha_j-1$. \par
Suppose $q_j$ is a pole of $\omega$. Let $(U,z)$ be a \lccc around $q_j$ such that $U \setminus
\{q_j\} \subset \Sigma'$, $z(U)$ is a disk $D$ and $z(q_j)=0$. Suppose
  $g=e^{2\varphi}|dz|^2$ on $U \setminus \{q_j\}$.
  Then $F=4e^{-2\varphi}K_{\bar{z}}$ is a holomorphic function on $D$, $0$ is the
unique zero of $F$ on $D$ and $\omega=\cfrac{dz}{F}$ on $U \setminus \{q_j\}$. Suppose $\cfrac{1}{F}$
  has the following expression on $D \setminus \{0\}$:
  \BE\label{laurentexpr8}
   \frac{1}{F}=\frac{\lambda_{-k}}{z^k}+\cdots+\frac{\lambda_{-2}}{z^2}+\frac{\lambda_{-1}}{z}+
   \sum_{m=0}^\infty \lambda_m z^m=\frac{\Phi(z)}{z^k},
  \EE
  where $\Phi(z)$ is a holomorphic function on $D$ with $\Phi(0)=\lambda_{-k}\neq 0$. Then
  \BE\label{omegaexpr8}
   \omega=\frac{\lambda_{-1}}{z}dz+df_1,
  \EE
  where $f_1=\cfrac{f_2}{z^{k-1}}$ and $f_2$ is a holomorphic function on $D$ with $f_2(0) \neq 0$.
Then on $D \setminus \{0\}$
\bey\label{conicalequ2}
\nn (-\frac{1}{3\mu^2})[\ln(-2\mu-K)-\ln(K-\mu)-\frac{3\mu}{K-\mu}]=\\
 \lambda_{-1}\ln |z|^2+2Re(f_1)+c .
\eey
where $\lambda_{-1}, c \in \mathds{R}$. \par
On the other hand, there exists a positive continuous function $h$ on $D$ such that on $D \setminus
\{0\}$
\[ \frac{h}{|z|^{2-2\alpha_j}}=-\frac{4}{3}(K-\mu)^2(K+2\mu)\frac{|\Phi|^2}{|z|^{2k}}.
\]
Then
\[ \lim_{z \rightarrow 0}(K-\mu)^2(K+2\mu)=0.
\]
By the same argument in the proof of Proposition \ref{regprop},
\[ \lim_{z \rightarrow 0} K=\mu~~\text{or}~~\lim_{z \rightarrow 0} K=-2\mu.
\]
If $\dps{\lim_{z \rightarrow 0} K=\mu}$, then
\[ \lim_{z \rightarrow 0} \frac{K-\mu}{|z|^{k+\alpha_j-1}}=A_1,~A_1>0.
\]
Multiply both sides of (\ref{conicalequ2}) by $|z|^{k+\alpha_j-1}$, let $z \rightarrow 0$ and take limits.
The limit of the left side is a nonzero real number and the limit of the right side is $0$. It is a contradiction.
Hence we have $\dps{\lim_{z \rightarrow 0}K=-2\mu}$. Then
\[ \lim_{z \rightarrow 0} \frac{-2\mu-K}{|z|^{2k+2\alpha_j-2}}=A_2,~A_2>0,
\]
which implies
\[\lim_{z \rightarrow 0} \frac{\ln (-2\mu-K)}{\ln |z|}=2k+2\alpha_j-2.
\]
If $k>1$, we can also use the argument in the proof of Theorem \ref{sth} to get a contradiction.
Therefore $k=1$, that is, $q_j$ is a simple pole of $\omega$. Then divide both sides of (\ref{conicalequ2})
by $\ln |z|$, let $z \rightarrow 0$ and take limits. The limit of the left side is
$-\cfrac{2\alpha_j}{3\mu^2}$ and the limit of the right side is $2\lambda_{-1}$, so
\[Res_{q_j}(\omega)=\lambda_{-1}=-\frac{\alpha_j}{3\mu^2}.
\]
Then we prove the theorem.
\end{proof}
Therefore we finish the proof of the necessity of Theorem \ref{mainth}.
\begin{remark} By (3) in Theorem \ref{ehmcuspth} and Theorem \ref{conicalth}, $K$ is a continuous function on
$\Sigma$. By the assumption in Theorem \ref{mainth} and Theorem \ref{conicalth}, $q_1,q_2,\cdots,q_L$ which are
the saddle points of $K$ are the zeros of $\omega$ and $q_{L+1},q_{L+2},\cdots,q_J$ are the poles of $\omega$.
\end{remark}
In the following, we will give formulas for
\[ \mathcal{C}_n=\int_{\Sigma'} K^n dg,~n=0,1,2,\cdots
\]
Obviously, $\mathcal{C}_0$ is the area of $g$, $\mathcal{C}_1$ is related to the generalized Gauss-
Bonnet formula and $\mathcal{C}_2$ is the Calabi energy of $g$. First
\beyy
\mathcal{C}_n &=& \int_{\Sigma'} K^n dg\\
 &=& \frac{\sqrt{-1}}{2}\int_{\Sigma'} K^n \frac{-4}{3}(K-\mu)^2(K+2\mu)\omega \wedge \bar{\omega}\\
 &=& 2\sqrt{-1}\int_{\Sigma'} K^n \partial K\wedge \bar{\omega}\\
 &=& \frac{2\sqrt{-1}}{n+1} \int_{\Sigma'} d(K^{n+1} \bar{\omega}) \\
 &=& \lim_{\varepsilon \rightarrow 0} \frac{2\sqrt{-1}}{n+1}\int_{\Sigma \setminus (\cup_{i=1}^I
 D_\varepsilon (p_i)\bigcup \cup_{s=1}^S D_\varepsilon (e_s) \bigcup \cup_{j=1}^J D_\varepsilon
 (q_j))} d(K^{n+1}\bar{\omega}),
\eeyy
where $D_\varepsilon(p_i)(D_\varepsilon(e_s), D_\varepsilon(q_j))$ is a coordinate disk around $p_i(e_s, q_j)$
with the center $p_i(e_s, q_j)$ and the radius $\varepsilon$. By the Stokes formula,
\beyy
 \int_{\Sigma \setminus (\cup_{i=1}^I
 D_\varepsilon (p_i)\bigcup \cup_{s=1}^S D_\varepsilon (e_s) \bigcup \cup_{j=1}^J D_\varepsilon
 (q_j))} d(K^{n+1}\bar{\omega})=\\
-\sum_{i=1}^I \oint_{\partial D_\varepsilon (p_i)} K^{n+1} \bar{\omega}-
\sum_{s=1}^S \oint_{\partial D_\varepsilon (e_s)} K^{n+1} \bar{\omega}-\sum_{j=1}^J \oint_
{\partial D_\varepsilon (q_j)} K^{n+1} \bar{\omega},
\eeyy
where the directions of the integrations are anticlockwise. Consider
\[ \lim_{\varepsilon \rightarrow 0} \oint_{\partial D_\varepsilon (q_j)} K^{n+1} \bar{\omega}.
\]
If $q_j$ is a zero of $\omega$, suppose on $D_{\varepsilon_0}(q_j) $
\[ \omega=\rho_1(z)dz,
\]
where $\rho_1(z)$ is a holomorphic function on the coordinate disk $D_{\varepsilon_0}(q_j)$. Then
$\forall \varepsilon,~0<\varepsilon<\varepsilon_0$,
\[
\oint_{\partial D_\varepsilon (q_j)} K^{n+1} \bar{\omega}=
(-\sqrt{-1})\varepsilon \int_0^{2\pi} K^{n+1} \overline{\rho_1}e^{-\sqrt{-1}\theta}d\theta,
\]
where $z=re^{\sqrt{-1}\theta}$ on $D_{\varepsilon_0}(q_j)$. Since $K$ and $\overline{\rho_1}$ are
bounded around $q_j$,
\[ \lim_{\varepsilon \rightarrow 0}\oint_{\partial D_\varepsilon(q_j)} K^{n+1} \bar{\omega}=0.
\]
If $q_j$ is a pole of $\omega$, suppose on $D_{\varepsilon_1}(q_j) \setminus \{0\}$
\[ \omega=\frac{\lambda_{-1}}{z}dz+\rho_2(z)dz,
\]
where $\lambda_{-1}=Res_{q_j}(\omega)$ and $\rho_2(z)$ is a holomorphic function on the
coordinate disk $D_{\varepsilon_1}(q_j)$. Then $\forall \varepsilon,~0<\varepsilon<\varepsilon_1$,
\beyy \oint_{\partial D_\varepsilon (q_j)} K^{n+1} \bar{\omega} &=&
\oint_{\partial D_\varepsilon (q_j)} K^{n+1} (\frac{\lambda_{-1}}{\bar{z}}d\bar{z}+\overline{\rho_2(z)}
d\bar{z})\\
&=& (-\sqrt{-1}\lambda_{-1})\int_0^{2\pi} K^{n+1}d\theta+\oint_{\partial D_\varepsilon (q_j)}K^{n+1}
\overline{\rho_2}d\bar{z}.
\eeyy
Since
\[ \lim_{\varepsilon \rightarrow 0} \int_0^{2\pi} K^{n+1}d\theta=2\pi K(q_j)^{n+1} ~~\text{and}~~\lim_{\varepsilon
\rightarrow 0} \oint_{\partial D_\varepsilon (q_j)} K^{n+1} \overline{\rho_2} d\bar{z}=0,
\]
\beyy
 \lim_{\varepsilon \rightarrow 0} \oint_{\partial D_\varepsilon (q_j)} K^{n+1} \bar{\omega} &=&
(-\sqrt{-1}\lambda_{-1})2\pi K(q_j)^{n+1} \\
&=& (-2\pi \sqrt{-1})Res_{q_j}(\omega)(-2\mu)^{n+1}.
\eeyy
Similarly,
\[\lim_{\varepsilon \rightarrow 0} \oint_{\partial D_\varepsilon(e_s)} K^{n+1} \bar{\omega}=
(-2\pi \sqrt{-1})Res_{e_s}(\omega)(-2\mu)^{n+1},
\]
\[ \lim_{\varepsilon \rightarrow 0} \oint_{\partial D_\varepsilon(p_i)} K^{n+1} \bar{\omega}=
(-2\pi \sqrt{-1})Res_{p_i}(\omega)\mu^{n+1}.
\]
Therefore we obtain
\beyy
 \mathcal{C}_n = \frac{2\sqrt{-1}}{n+1} [(2\pi \sqrt{-1})\sum_{i=1}^I Res_{p_i}(\omega)\mu^{n+1}+\\ (2\pi
\sqrt{-1})\sum_{s=1}^S Res_{e_s}(\omega)(-2\mu)^{n+1}+
(2\pi \sqrt{-1})\sum_{j=L+1}^J Res_{q_j}(\omega)(-2\mu)^{n+1}] \\
=\frac{(-4\pi)}{n+1} \mu^{n+1} [\sum_{i=1}^I Res_{p_i} (\omega)+\sum_{s=1}^S Res_{e_s}(\omega)
(-2)^{n+1}+\sum_{j=L+1}^J Res_{q_j}(\omega)(-2)^{n+1}].
\eeyy
Since
\[ \sum_{i=1}^I Res_{p_i}(\omega)+\sum_{s=1}^S Res_{e_s}(\omega)+\sum_{j=L+1}^J Res_{q_j}(\omega)=0
\]
and
\beyy
 Res_{e_s}(\omega)=-\frac{1}{3\mu^2},~s=1,2,\cdots,S,\\
 Res_{q_j}(\omega)=-\frac{\alpha_j}{3\mu^2},~j=L+1,L+2,\cdots,J,
\eeyy
\BE\label{C_n}
 \mathcal{C}_n=\frac{2}{3(n+1)}\mu^{n-1}[(-2)^{n+1}-1]\alpha_{max},
\EE
where $\alpha_{max}=2\pi (S+\dps{\sum_{j=L+1}^J}\alpha_j)$ means the sum of the angles of the maximum points of
$K$. By (\ref{C_n}), $\mathcal{C}_n>0$, $n=0,1,2,\cdots$, and
\beyy
 \mathcal{C}_0=Area(g)=-\frac{2}{\mu}\alpha_{max},\\
 \mathcal{C}_1=\int_{\Sigma '}K dg=\alpha_{max}=2\pi(S+\dps{\sum_{j=L+1}^J}\alpha_j), \\
 \mathcal{C}_2=\mathcal{C}(g)=-2\mu \alpha_{max}.
\eeyy
\subsection{Proof of the sufficiency of the main theorem}
In this section, we will follow the steps in the proof of the sufficiency of the main theorem in \cite{CW2}.
Since $\omega+\bar{\omega}$ is exact on $\Sigma'=\Sigma \setminus \{ p_1,p_2,\cdots,p_I,q_1,q_2,\cdots,q_J,
e_1,e_2,\cdots,e_S\}$, we suppose that $\omega+\bar{\omega}=df_0$,
where $f_0$ is a smooth function on $\Sigma'$. And we let $\mu=-\cfrac{1}{\sqrt{-3\Lambda}}$. \par
\noindent {\bf Step 1.} Consider the equation on $\Sigma'$
\BE\label{dKequS}
 \begin{cases}
  \cfrac{(-3)dK}{(K-\mu)^2(K+2\mu)}=\omega+\bar{\omega} \\
 K(p_0)=K_0,~p_0 \in \Sigma',~\mu < K_0<-2\mu.
 \end{cases}
\EE

\noindent \textit{Claim 1:} (\ref{dKequS}) has a unique real smooth solution $K$ on $\Sigma'$ such that $\mu <K<-2\mu$.
\begin{proof}
 First
\[ \frac{1}{(K-\mu)^2(K+2\mu)}=\frac{1}{9\mu^2}[\frac{1}{K+2\mu}-\frac{1}{K-\mu}+\frac{3\mu}{(K-\mu)^2}].
\]
Since $\omega+\bar{\omega}=df_0$,
(\ref{dKequS}) is equivalent to
\BE\label{dKequS'}
 \begin{cases}
 [\cfrac{1}{K+2\mu}-\cfrac{1}{K-\mu}+\cfrac{3\mu}{(K-\mu)^2}]dK=d\cfrac{f_0}{\Lambda} \\
 K(p_0)=K_0.
 \end{cases}
\EE
Also let
\[ \sigma(t)=\ln(-2\mu-t)-\ln(t-\mu)-\frac{3\mu}{t-\mu},~t \in (\mu,-2\mu).
\]
Then by the same argument in the proof of Proposition \ref{regprop}, we can define a real function
$K^*$ on $\Sigma'$ such that $\mu<K^*<-2\mu$ and
\BE\label{IKequS}
 \sigma(K^*)=\frac{f_0}{\Lambda}+A_0,
\EE
where $A_0=\sigma(K_0)-\cfrac{f_0(p_0)}{\Lambda}$. By the implicit theorem, $K^* \in C^\infty (\Sigma')$.
Since $K^*$ satisfies (\ref{IKequS}),
\[ d \sigma(K^*)=d \frac{f_0}{\Lambda}
\]
and $\sigma(K^*(p_0))=\sigma(K_0)$, which means $K^*(p_0)=K_0$. Therefore $K^*$ is a solution of
(\ref{dKequS'}). By the uniqueness of the solutions of (\ref{IKequS}), $K^*$ is the
uniqueness solution of (\ref{dKequS'}). We prove the claim.
\end{proof}
In the following, we use $K$ to denote the solution of (\ref{dKequS}). Since each $q_l$,
$l=1,2,\cdots,L$, is a zero of $\omega$, $f_0$ can be smoothly extended to $q_l$ and $K$ can also
be smoothly extended to $q_l$ with $\mu < K(q_l) <-2\mu$. Next we have the following claim: \par

\noindent \textit{Claim 2:} $K$ can be continuously extended to $p_i$, $i=1,2,\cdots,I$, with
$K(p_i)=\mu$, to $q_{l'}$, $l'=L+1,L+2,\cdots,J$, with $K(q_{l'})=-2\mu$ and to $e_s$, $s=1,2,
\cdots,S$, with $K(e_s)=-2\mu$.
\begin{proof}
 First pick any $p_i$ and let $(U,z)$ be a \lccc around $p_i$ such that
$U \setminus \{p_i\} \subset \Sigma'$, $z(U)$ is a disk $D$
and $z(p_i)=0$. Then suppose
\BE\label{omegaequ1}
 \omega|_{U \setminus \{p_i\}}=\frac{\lambda_{-1}}{z}dz+d\eta_1=\frac{\eta_2(z)}{z}dz,
\EE
where $\lambda_{-1}$ is the residue of $\omega$ at $p_i$, $\eta_1$ is a holomorphic function on
$D$ and $\eta_2(z)$ is also a holomorphic function on $D$ with $\eta_2(0)=
\lambda_{-1}$. Then
\[ (\omega+\bar{\omega})|_{U \setminus \{p_i\}}=\lambda_{-1}d \ln|z|^2+2dRe(\eta_1)=
df_0.
\]
Therefore
\[f_0=\lambda_{-1}\ln|z|^2+2Re(\eta_1)+a^*~~\text{on}~D\setminus \{0\},
\]
where $a^*$ is a real constant, or equivalently,
\BE\label{f_0'}
 \frac{f_0}{\Lambda}=\frac{\lambda_{-1}}{\Lambda}\ln|z|^2+2Re(\frac{\eta_1}{\Lambda})+\frac{a^*}{\Lambda}~~
\text{on}~D \setminus \{0\}.
\EE
Substitute (\ref{f_0'}) into (\ref{IKequS}) to get on $D \setminus \{0\}$
\beyy
\ln(-2\mu-K)-\ln (K-\mu)-\frac{3\mu}{K-\mu}=\\
\frac{\lambda_{-1}}{\Lambda} \ln|z|^2+2Re(\frac{\eta_1}{\Lambda})+\frac{a^*}{\Lambda}+A_0,
\eeyy
where $A_0=\sigma(K_0)-\cfrac{f_0(p_0)}{\Lambda}$,
or equivalently, on $D \setminus \{0\}$
\BE\label{IKequp}
 (-2\mu-K)\frac{1}{K-\mu}e^{-\frac{3\mu}{K-\mu}}=A^* |z|^{\frac{2\lambda_{-1}}{\Lambda}}e^{
2Re(\frac{\eta_1}{\Lambda})},
\EE
where $A^*$ is a positive constant. Suppose that there is a sequence $\{z_n\}$ in $D
 \setminus \{0\}$ such that $z_n \rightarrow 0$ as $n \rightarrow \infty$ and
$\dps{\lim_{n \rightarrow \infty}}K(z_n)=b^*$. Then $\mu \leq b^* \leq -2\mu$. If $b^* \neq \mu$,
then substitute $\{z_n\}$ into (\ref{IKequp}), let $n \rightarrow \infty$ and take limits.
The limit of the left side is $(-2\mu-b^*)\cfrac{1}{b^*-\mu}e^{-\frac{3\mu}{b^*-\mu}}$,
the limit of the right side is $+\infty$(note $\lambda_{-1}>0$). It is a contradiction.
Hence $b^*=\mu$, which shows
\[ \lim_{z \rightarrow 0} K=\mu.
\]
\par
Similarly, we can prove that
\[ \lim_{p \rightarrow q_{l'}}K(p)=-2\mu~~\text{and}~~\lim_{p \rightarrow e_s}K(p)=-2\mu.
\]
Then we prove the claim.
\end{proof}
\noindent {\bf Step 2.} Define a metric on $\Sigma'$
\BE\label{gdef}
 g=-\frac{4}{3}(K-\mu)^2(K+2\mu)\omega\bar{\omega}.
\EE
{\textit{Claim 3.}} $g$ is an HCMU metric on $\Sigma'$ and $K$ is just the Gauss curvature of $g$.
\begin{proof}
Let $(U,z)$ be a local complex coordinate chart on $\Sigma'$. Suppose
\[\omega=\rho(z)dz~~\text{on}~U.
\]
Then
\[g|_U=-\frac{4}{3}(K-\mu)^2(K+2\mu)|\rho|^2|dz|^2.
\]
Let
\[e^{2\varphi}=-\frac{4}{3}(K-\mu)^2(K+2\mu)|\rho|^2,
\]
then
\[ \varphi=\frac{1}{2}\ln \frac{4(K-\mu)^2(-2\mu-K)|\rho|^2}{3}.
\]
Therefore
\BE\label{derivofphi}
\varphi_z=\frac{3\rho(K+\mu)K_z+(K+2\mu)(K-\mu)\rho_z}{2(K-\mu)(K+2\mu)\rho}.
\EE
By (\ref{dKequS}),
\BE\label{derivofK}
K_z=-\frac{1}{3}(K-\mu)^2(K+2\mu)\rho.
\EE
Substitute (\ref{derivofK}) into (\ref{derivofphi}) to get
\[ \varphi_z=\frac{1}{2}[-\rho (K^2-\mu^2)+\frac{\rho_z}{\rho}].
\]
Then
\[\varphi_{z\bar{z}}=-\rho KK_{\bar{z}}=\frac{1}{3}K(K-\mu)^2(K+2\mu)|\rho|^2.
\]
Therefore
\[
-\Delta \varphi=K e^{2\varphi},
\]
which shows $K$ is just the Gauss curvature of $g$. Meanwhile,
\[K_{\bar{z}}\rho=-\frac{1}{3}(K-\mu)^2(K+2\mu)|\rho|^2=\frac{1}{4}e^{2\varphi},
\]
so
\[ e^{-2\varphi}K_{\bar{z}}=\frac{1}{4\rho},
\]
which means $\nabla K$ is a holomorphic vector field on $\Sigma'$.
Hence $g$ is an HCMU metric on $\Sigma'$.
\end{proof}
\noindent {\bf Step 3.}~\textit{Claim~4}.~~$g$ is smooth at $e_s$, $s=1,2,\cdots,S$, and satisfies the angle
condition at $p_i$, $i=1,2,\cdots,I$, and $q_j$, $j=1,2,\cdots,J$. \par

\begin{proof}
Pick any $e_s$ and let $(U,z)$ be a local complex coordinate chart around $e_s$ such that
$U \setminus \{e_s\} \subset \Sigma'$, $z(U)$ is a disk $D$ and $z(e_s)=0$. Suppose that
\[ \omega|_{U \setminus \{e_s\}}=\frac{\Lambda}{z}dz+d\eta_1=\frac{\eta_2(z)}{z}dz,
\]
where $\eta_1$ is a holomorphic function on $D$ and $\eta_2(z)$ is a nonvanishing holomorphic function
on $D$. Then
\[g|_{U \setminus \{e_s\}}=e^{2\varphi}|dz|^2=-\frac{4}{3}(K-\mu)^2(K+2\mu)\frac{|\eta_2(z)|^2}
{|z|^2}|dz|^2.
\]
On the other hand, similar to (\ref{IKequp}), on $D \setminus \{0\}$
\[(-2\mu-K)\frac{1}{K-\mu}e^{-\frac{3\mu}{K-\mu}}=B_1 |z|^2e^{
2Re(\frac{\eta_1}{\Lambda})},
\]
where $B_1$ is a positive constant.
Then
\[ e^{2\varphi}=B_2(K-\mu)^3 e^{[2Re(\frac{\eta_1}{\Lambda})+\frac{3\mu}{K-\mu}]}|\eta_2(z)|^2,
\]
where $B_2=\cfrac{4}{3}B_1$. Therefore $e^{2\varphi}$ is continuous and positive on
$D$. Next consider the system of equations on $D \setminus \{0\}$:
\BE\label{equsys2}
 \begin{cases}
  K_{\bar{z}}=\cfrac{z}{4\eta_2(z)}e^{2\varphi}\\
  e^{2\varphi}=B_2(K-\mu)^3 e^{[2Re(\frac{\eta_1}{\Lambda})+\frac{3\mu}{K-\mu}]}|\eta_2(z)|^2.
 \end{cases}
\EE
Apply Lemma \ref{analysislemma} to the first equation of (\ref{equsys2}) to get
$K \in C^1(D)$. Then by the bootstrap technique, $K \in C^{\infty}(D)$ and $e^{2\varphi} \in
C^\infty(D)$. And
\[ -\Delta \varphi=K e^{2\varphi},~~K_{\bar{z}}=\frac{z}{4\eta_2(z)}e^{2\varphi}
\]
hold on $D$, which shows $g$ is actually an HCMU metric on $\Sigma^*=\Sigma \setminus \{
p_1,p_2,\cdots,p_I,q_1,q_2,\cdots,q_J\}$. \par

Pick any $p_i$ and let $(V,w)$ be a \lccc around $p_i$ such that $V \setminus \{p_i\} \subset \Sigma'$,
$w(V)$ is a disk $\widehat{D}$ and $w(p_i)=0$. Suppose
\[ \omega|_{V \setminus \{p_i\}}=\frac{\widehat{\lambda}_{-1}}{w}dw+d\widehat{\eta}_1=
\frac{\widehat{\eta}_2(w)}{w}dw,
\]
where $\widehat{\lambda}_{-1}=Res_{p_i}(\omega)$, $\widehat{\eta}_1$ is a holomorphic function
on $\widehat{D}$, $\widehat{\eta}_2(w)$ is also a holomorphic function on $\widehat{D}$ with
$\widehat{\eta}_2(0)=\widehat{\lambda}_{-1}$. Then on $\widehat{D} \setminus \{0\}$
\[ \ln(-2\mu-K)-\ln(K-\mu)-\frac{3\mu}{K-\mu}=\frac{2\widehat{\lambda}_{-1}}{\Lambda}\ln|w|+
2Re(\frac{\widehat{\eta}_1}{\Lambda})+\widehat{a},
\]
where $\widehat{a}$ is a real constant. Therefore
\BE\label{Kcusplim}
 \lim_{w \rightarrow 0}(K-\mu)\ln|w|=\widehat{A},
\EE
where $\widehat{A}$ is a nonzero real number.
On the other hand,
\[g|_{V \setminus \{p_i\}}=e^{2\widehat{\varphi}}|dw|^2=-\frac{4}{3}(K-\mu)^2(K+2\mu)
\frac{|\widehat{\eta}_2(w)|^2}{|w|^2}|dw|^2,
\]
that is,
\[e^{2\widehat{\varphi}}=-\frac{4}{3}(K-\mu)^2(K+2\mu)
\frac{|\widehat{\eta}_2(w)|^2}{|w|^2}.
\]
Then
\beyy
\widehat{\varphi} &=& \frac{1}{2}\ln[-\frac{4}{3}(K-\mu)^2(K+2\mu)\frac{|\widehat{\eta}_2(w)|^2}{|w|^2}]\\
&=& \ln(K-\mu)-\ln|w|+\frac{1}{2}\ln \frac{4(-2\mu-K)|\widehat{\eta}_2(w)|^2}{3}.
\eeyy
By (\ref{Kcusplim}),
\[ \lim_{w \rightarrow 0} \frac{\widehat{\varphi}+\ln|w|}{\ln |w|}=0,
\]
which shows $p_i$ is a cusp singularity of $g$. \par

Next pick any $q_l$ and let $(W,\xi)$ be a \lccc around $q_l$ such that $W \setminus \{q_l\} \subset \Sigma'$,
 $\xi(W)$ is a disk $D'$ and $\xi(q_l)=0$. Since $q_l$ is a zero of $\omega$ with $ord_{q_l}(\omega)=\alpha_l-1$,
suppose
\[ \omega|_W=\xi^{\alpha_l-1}H(\xi)d\xi,
\]
where $H(\xi)$ is a nonvanishing holomorphic function on $D'$. Therefore
\[g|_{W \setminus \{q_l\}}=-\frac{4}{3}(K-\mu)^2(K+2\mu)|\xi|^{2\alpha_l-2}|H(\xi)|^2
|d\xi|^2.
\]
Since $\mu < K(q_l)<-2\mu$, $g$ has a conical singularity at $q_l$ with the singular angle $2\pi \alpha_l$. \par

Finally pick any $q_{l'}$ and let $(Y,\zeta)$ be a \lccc around $q_{l'}$ such that $Y \setminus \{q_{l'}\}
\subset \Sigma'$ , $\zeta(Y)$ is a disk $\widetilde{D}$ and $\zeta(q_{l'})=0$. Suppose that
\[ \omega|_{Y \setminus \{q_{l'}\}}=\frac{\Lambda \alpha_{l'}}{\zeta}d\zeta+d\widetilde{\eta}_1=
\frac{\widetilde{\eta}_2(\zeta)}{\zeta}d\zeta,
\]
where $\widetilde{\eta}_1$ is a holomorphic function on $\widetilde{D}$ and
$\widetilde{\eta}_2(\zeta)$ is also a holomorphic function on $\widetilde{D}$ with
$\widetilde{\eta}_2(0)=\Lambda \alpha_{l'}$. Then on $\widetilde{D} \setminus \{0\}$
\[ \ln(-2\mu-K)-\ln(K-\mu)-\frac{3\mu}{K-\mu}=\alpha_{l'}\ln |\zeta|^2+2Re(\frac{\widetilde{\eta}_1}{\Lambda})+
\widetilde{a},
\]
where $\widetilde{a}$ is a constant.
On the other hand,
\BE\label{gcon2}
 g|_{Y \setminus \{q_{l'}\}}=-\frac{4}{3}(K-\mu)^2(K+2\mu)\frac{|\widetilde{\eta}_2(\zeta)|^2}{|\zeta|^2}
|d\zeta|^2.
\EE
Then substitute
\[ -2\mu-K=\widetilde{A}|\zeta|^{2\alpha_{l'}}(K-\mu)e^{[2Re(\frac{\widetilde{\eta}_1}{\Lambda})+\frac{3
\mu}{K-\mu}]}
\]
into (\ref{gcon2}) to get
\[ g|_{Y \setminus \{q_{l'}\}}=A^*(K-\mu)^3
e^{[2Re(\frac{\widetilde{\eta}_1}{\Lambda})+\frac{3\mu}{K-\mu}]}|\widetilde{\eta}_2(\zeta)|^2|
\zeta|^{2\alpha_{l'}-2}|d\zeta|^2,
\]
where $\widetilde{A}$ and $A^*$ are constants. Therefore
$g$ has a conical singularity at $q_{l'}$ with the singular angle $2\pi \alpha_{l'}$.
We prove the claim.
\end{proof}
\noindent {\bf Step 4.} $g$ has finite area and finite Calabi energy. \par
We can use the same method in calculating $\mathcal{C}_n$ to get $g$
has finite area and finite Calabi energy. \par
Hence we finish the proof of the sufficiency of Theorem \ref{mainth}.

\section{Existence of a kind of meromorphic 1-forms on a Riemann surface}
We see that the character 1-form $\omega$ of an HCMU metric on a compact Riemann surface
which has cusp singularities and conical singularities must have
the following properties:
\begin{itemize}
 \item[1.] $\omega$ only has simple poles,
 \item[2.] The residue of $\omega$ is a real number at each pole,
 \item[3.] $\omega+\bar{\omega}$ is exact on $\Sigma \setminus \{ poles~ of~\omega\}$.
\end{itemize}
In fact, on any Riemann surface(compact or noncompact), this kind of meromorphic 1-form exists.
\begin{theorem}[\cite{S}]\label{meroexi}
 Let $\Sigma$ be a Riemann surface and $p,q$ be two distinct points on $\Sigma$. Then there
exists a meromorphic 1-form $\omega$ on $\Sigma$ such that
\begin{itemize}
 \item[1.] $\omega$ only has two simple poles at $p$ and $q$ with $Res_p(\omega)=1$ and
$Res_q(\omega)=-1$;
 \item[2.] $\omega+\bar{\omega}$ is exact on $\Sigma \setminus \{p,q\}$.
\end{itemize}
\end{theorem}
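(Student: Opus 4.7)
The plan is to realize the desired meromorphic differential as $\omega = 2\,\partial u$ for a suitable real, single-valued, harmonic function $u$ on $\Sigma \setminus \{p,q\}$ with prescribed logarithmic singularities. In local coordinates, if $u_{z\bar{z}}=0$, then $u_z$ is a holomorphic function of $z$, so $\omega := 2 u_z\, dz$ is a holomorphic 1-form on $\Sigma \setminus \{p,q\}$. Moreover the desired exactness condition is automatic, since
\[
\omega + \bar{\omega} = 2(u_z\, dz + u_{\bar{z}}\, d\bar{z}) = 2\, du,
\]
and $u$ is globally defined and single-valued on $\Sigma \setminus \{p,q\}$.

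First I would translate the singular behavior of $\omega$ at $p,q$ into the singular behavior of $u$. If $z$ is a local coordinate vanishing at $p$ and one arranges $u = \log|z| + (\text{harmonic})$ near $p$, then $u_z = \frac{1}{2z} + (\text{holomorphic})$, hence $\omega = \frac{dz}{z} + (\text{holomorphic})$, giving a simple pole of residue $+1$ at $p$. Imposing symmetrically $u = -\log|w| + (\text{harmonic})$ near $q$ yields a simple pole of residue $-1$ at $q$, and away from $\{p,q\}$ harmonicity of $u$ forces $\omega$ to be holomorphic. Thus the whole theorem reduces to constructing such a function $u$.

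The main step, and the genuine obstacle, is the existence of $u$. When $\Sigma$ is compact, $u$ is (up to a sign) a bipolar Green's function: one solves $\Delta u = 2\pi(\delta_p - \delta_q)$ distributionally, and the sole integral compatibility condition $\int_{\Sigma}(\delta_p-\delta_q) = 0$ holds because the prescribed residues sum to zero. Standard Hodge theory (or the classical theory of abelian differentials of the third kind) then furnishes $u$, and the rest follows mechanically. When $\Sigma$ is non-compact, one cannot invoke a solvability obstruction since there is no topological cycle capturing the total residue; instead one uses the Behnke–Stein theorem (every open Riemann surface is Stein) to produce a meromorphic 1-form with the prescribed principal parts, and then corrects it by the real part of a holomorphic 1-form to make its real part exact. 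Equivalently, one exhausts $\Sigma$ by relatively compact sub-domains $\Sigma_n$ with smooth boundary, solves the corresponding mixed boundary-value problem on each $\Sigma_n \setminus \{p,q\}$ by Perron's method to produce $u_n$, and extracts a subsequential limit $u$ via normal families after normalizing, say, $u(x_0)=0$ at a fixed base point.

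Once $u$ is in hand, verification is routine: harmonicity gives the holomorphy of $\omega$ on $\Sigma \setminus \{p,q\}$, the prescribed asymptotics give the simple poles and residues, and the identity $\omega+\bar{\omega}=2\,du$ gives exactness on $\Sigma \setminus \{p,q\}$. The conceptual content of the theorem therefore lies entirely in the potential-theoretic existence step, which is the part I expect to demand the most care, especially to guarantee single-valuedness of $u$ in the non-compact case where completeness-type assumptions may enter through the choice of exhaustion.
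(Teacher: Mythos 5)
The paper gives no proof of this theorem at all --- it is quoted verbatim from Springer's book \cite{S}, where it is established exactly as you propose: one constructs a single-valued harmonic dipole function $u$ with singularities $\log|z|$ at $p$ and $-\log|w|$ at $q$ (via the Dirichlet principle / orthogonal projection on compact surfaces, and an exhaustion argument on open ones) and sets $\omega=2\,\partial u$, so that $\omega+\bar{\omega}=2\,du$ is exact and the prescribed logarithmic asymptotics give the simple poles with residues $\pm 1$. Your proposal is correct and is essentially this same classical argument, with the genuine content rightly located in the potential-theoretic existence of $u$.
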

By Theorem \ref{meroexi}, we can get the following theorem:
\begin{theorem}\label{meroexi'}
Let $\Sigma$ be a Riemann surface, $p_1,p_2,\cdots,p_n$ be $n(n\geq 2)$ points on $\Sigma$ and
$\lambda_1,\lambda_2,\cdots,\lambda_n$ be $n$ nonzero real numbers with $\dsum_{i=1}^n \lambda_i=0$.
Then there exists a meromorphic 1-form $\omega$ on $\Sigma$ such that
\begin{itemize}
 \item[1.]$\omega$ only has simple poles at $p_1,p_2,\cdots,p_n$ with $Res_{p_i}(\omega)=
\lambda_i$, $i=1,2,\cdots,n$;
 \item[2.] $\omega+\bar{\omega}$ is exact on $\Sigma \setminus \{p_1,p_2,\cdots,p_n\}$.
\end{itemize}
\end{theorem}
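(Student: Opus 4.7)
The plan is to bootstrap the two-pole statement of Theorem \ref{meroexi} to the $n$-pole statement by a real linear combination in which $p_n$ plays the role of a common ``sink'' pole, against which the other poles $p_1,\ldots,p_{n-1}$ are balanced one by one. The hypothesis $\sum_{i=1}^n\lambda_i=0$ will be precisely what matches the residue at $p_n$ after superposition, while the fact that the $\lambda_i$ are real will be what preserves the exactness of $\omega+\bar{\omega}$.

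First I would fix $p_n$ and, for each $i\in\{1,2,\ldots,n-1\}$, apply Theorem \ref{meroexi} to the ordered pair $(p_i,p_n)$ to obtain a meromorphic 1-form $\omega_i$ on $\Sigma$ whose only poles are simple poles at $p_i$ and $p_n$ with $Res_{p_i}(\omega_i)=1$ and $Res_{p_n}(\omega_i)=-1$, and such that $\omega_i+\bar{\omega_i}$ is exact on $\Sigma\setminus\{p_i,p_n\}$. Then I would set $\omega:=\sum_{i=1}^{n-1}\lambda_i\omega_i$. This is a meromorphic 1-form on $\Sigma$ whose only candidate poles lie in $\{p_1,\ldots,p_n\}$, all of them simple. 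Computing residues: at each $p_k$ with $k<n$ only the $k$-th summand is singular, so $Res_{p_k}(\omega)=\lambda_k$; at $p_n$ every summand contributes $-\lambda_i$, and the balancing hypothesis $\sum_{i=1}^n\lambda_i=0$ gives $Res_{p_n}(\omega)=-\sum_{i=1}^{n-1}\lambda_i=\lambda_n$. Since every $\lambda_i$ is nonzero, the pole set of $\omega$ is exactly $\{p_1,\ldots,p_n\}$ with the prescribed residues.

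For the exactness condition I would write $\omega_i+\bar{\omega_i}=df_i$ on $\Sigma\setminus\{p_i,p_n\}\supseteq\Sigma\setminus\{p_1,\ldots,p_n\}$, where $f_i$ is smooth and real-valued. Because each $\lambda_i\in\mathds{R}$, conjugation commutes with multiplication by $\lambda_i$, so
\[
\omega+\bar{\omega}=\sum_{i=1}^{n-1}\lambda_i(\omega_i+\bar{\omega_i})=d\!\left(\sum_{i=1}^{n-1}\lambda_i f_i\right)
\]
on the open set $\Sigma\setminus\{p_1,\ldots,p_n\}$, which is the required exactness.

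The one place that actually needs care, and which I view as the main (though mild) obstacle, is precisely this last step: the identity $\overline{\lambda_i\omega_i}=\lambda_i\bar{\omega_i}$ uses reality of the $\lambda_i$ in an essential way, and without it the linear combination would not automatically inherit exactness of $\omega+\bar{\omega}$ from exactness of the individual $\omega_i+\bar{\omega_i}$. Beyond this, the argument is purely formal bookkeeping, and no analysis past Theorem \ref{meroexi} is required.
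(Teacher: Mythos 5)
Your proof is correct and is essentially the argument the paper intends: the paper states Theorem \ref{meroexi'} as an immediate consequence of Theorem \ref{meroexi} without writing out the deduction, and your superposition $\omega=\sum_{i=1}^{n-1}\lambda_i\omega_i$ with $p_n$ as the common sink, using $\sum_{i=1}^n\lambda_i=0$ for the residue at $p_n$ and the reality of the $\lambda_i$ for exactness of $\omega+\bar{\omega}$, is exactly that deduction.
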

Now let $\Sigma$ be a compact Riemann surface. $\omega$ is a given meromorphic 1-form which
satisfies the conditions in Theorem \ref{meroexi'}. Then following the proof of the sufficiency
of Theorem \ref{mainth}, we can get there exists an HCMU metric which has cusp
singularities and conical singularities and just
has $\omega$ as the character 1-form. Meanwhile we can see it is possible that
different HCMU metrics(even HCMU metrics only with conical singularities and HCMU metrics with
cusp singularities and conical singularities) have the same character 1-form.


\par\vskip0.5cm
\noindent Qing Chen\\
Wu Wen-Tsun Key Laboratory of Mathematics,
USTC, Chinese Academy of
Sciences\\
School of Mathematical Sciences\\
University of Science and Technology of China\\
Hefei 230026 P. R. China\\
qchen@ustc.edu.cn\\
\vskip0.5cm
 \noindent Yingyi Wu \\
School of Mathematical Sciences \\
University of Chinese Academy of Sciences \\ Beijing 100049\\
P.R. China \\
wuyy@ucas.ac.cn\\
\vskip0.5cm
\noindent Bin Xu
\\ Wu Wen-Tsun Key Laboratory of Mathematics, USTC, Chinese Academy of
Sciences\\
School of Mathematical Sciences\\
University of Science and Technology of China\\
Hefei 230026 P. R. China

\begin{thebibliography}{99}
\bibitem[1]{Ca}E.Calabi, \qdl Extremal K\"{a}hler metrics" in Seminar on
Differential Geometry, Ann. of Math. Stud. 102, Princeton Univ.
Press, Princeton, 1982, 259-290.
\bibitem[2]{Ch1}Xiuxiong Chen, Weak limits of Riemannian metrics in surfaces with integral curvature
bound, Calc. Var. 6, 189-226(1998).
\bibitem[3]{Ch2}Xiuxiong Chen, Extremal Hermitian metrics on Riemann surfaces, Calc. Var. 8,
191-232(1999).
\bibitem[4]{Ch3}Xiuxiong Chen, Obstruction to the Existence of Metric whose
Curvature has Umbilical Hessian in a K-Surface, Communications in
Analysis and Geometry, Volume 8, Number 2, 267-299, 2000.
\bibitem[5]{CCW}Q.Chen, X.X.Chen and Y.Y.Wu, The Structure of HCMU
Metric in a K-Surface, International Mathematics Research Notices,
2005, no.16, 941-958.
\bibitem[6]{CW1}Qing Chen and Yingyi Wu,
Existences and Explicit Constructions of HCMU metrics on $S^2$ and
$T^2$, Pacific Journal of Mathematics, Vol.240, No.2, 267-288, 2009.
\bibitem[7]{CW2}Qing Chen and Yingyi Wu, Character 1-form and the existence of an HCMU metric,
Math. Ann., 351(2), 327-345, 2011.
\bibitem[8]{LZ}Changshou Lin and Xiaohua Zhu, Explicit construction of extremal Hermitian
metric with finite conical singularities on $S^2$, Communications in
Analysis and Geometry, Volume 10, Number 1, 177-216, 2002.
\bibitem[9]{M}R.C.McOwen, Point singularities and conformal
metrics on Riemann surfaces, Proc. AMS. Volume 103,
No.1(1988),222-224.
\bibitem[10]{S}G.Springer, Introduction to Riemann Surfaces,
Addison-Wesley, 1957.
\bibitem[11]{T}M.Troyanov, Prescrbing curvature on compact surface with conical
singularities, Tran. AMS 324(1991), 793-821.
\bibitem[12]{WZ}Guofang Wang and Xiaohua Zhu, Extremal Hermitian metrics on Riemann surfaces with
singularities, Duke Math. Journal, Vol.104, No.2(2000), 181-210.



\end{thebibliography}
\end{document}